\newcommand{\boldf}{\mathbf{f}}
\newcommand{\bd}{\mathbf{d}}
\newcommand{\p}{\mathbf{p}}
\newcommand{\q}{\mathbf{q}}
\newcommand{\V}{\mathcal{V}}
\newcommand{\bp}{\mathbf{p}}
\newcommand{\bq}{\mathbf{q}}
\DeclareMathOperator{\diag}{diag}
\DeclareMathOperator{\tr}{tr}
\newcommand{\rk}{\operatorname{rk}}
\pgfplotsset{compat=1.11}
\newcommand{\im}{\operatorname{Im}}
\newcommand\myshade{100}
\colorlet{mylinkcolor}{NavyBlue}
\colorlet{mycitecolor}{YellowOrange}
\colorlet{myurlcolor}{Aquamarine}
\newtheorem{theorem}{Theorem}[section]
\newtheorem{definition}[theorem]{Definition}
\newtheorem{lemma}[theorem]{Lemma}
\newtheorem{fact}[theorem]{Fact}
\newtheoremstyle{case}{}{}{\itshape}{1em}{}{:}{ }{}
\theoremstyle{case}
\definecolor{darkred}{RGB}{180, 0, 0}
\definecolor{darkgreen}{rgb}{0, 0.6, 0}
\definecolor{darkblue}{RGB}{51,51,178}
\definecolor{lightgray}{RGB}{231,231,231}
\definecolor{lightblue}{RGB}{180,180,254}
\definecolor{lightred}{HTML}{FEB4B4}
\definecolor{darkcyan}{HTML}{7FBFBF}
\definecolor{b2}{RGB}{51,153,255}
\definecolor{mygreen}{RGB}{80,180,0}
\definecolor{yl}{RGB}{255,80,0}
\newcommand{\1}{1}
\newcommand{\supp}{\mathrm{supp}}
\newcommand{\Tmat}{{\cal T}_{\mathrm{mat}}}
\DeclareMathOperator{\new}{new}
\DeclareMathOperator{\midd}{mid}
\newcommand{\vnd}{\V_{n,d}}
\newcommand{\vntd}{\V_{n,2d}}
\renewcommand{\phi}{\varphi}
\renewcommand{\rho}{\varrho}
\newcommand{\R}{\mathbb{R}}
\newcommand{\wh}{\widehat}
\newcommand{\wt}{\widetilde}
\newcommand{\ov}{\overline}
\newcommand{\pr}[2]{\langle #1, #2 \rangle}
\newcommand{\OPT}{\mathrm{OPT}}
\newcounter{Hequation}
\g@addto@macro\equation{\stepcounter{Hequation}}
\title{
A Faster Interior-Point Method for Sum-of-Squares Optimization} 
\author{Shunhua Jiang \thanks{Supported by NSF CAREER award CCF-1844887.} \\ 
Columbia University\\ \texttt{sj3005@columbia.edu}
\and Bento Natura\thanks{This project has received funding from the European Research Council (ERC) under the European Union’s Horizon 2020 research and innovation programme (grant agreement no. 757481–ScaleOpt).} \\ 
London School of Economics\\
\texttt{b.natura@lse.ac.uk} 
\and Omri Weinstein\thanks{Supported by NSF CAREER award CCF-1844887 and ISF grant \#3011005535.}\\ The Hebrew University and Columbia University\\ \texttt{omri@cs.columbia.edu}
}
\newcommand{\K}{\mathcal K}
\newcommand{\cS}{\mathcal S}
\date{}
\begin{document}
\maketitle

\begin{abstract}

We present a faster interior-point method 
for optimizing sum-of-squares (SOS) polynomials, which are a central tool in polynomial optimization and capture convex programming in the Lasserre hierarchy. 
Let $p = \sum_i q^2_i$ be an $n$-variate SOS polynomial of degree $2d$. Denoting by $L := \binom{n+d}{d}$ and $U := \binom{n+2d}{2d}$ the dimensions of the vector spaces in which $q_i$'s and $p$ live respectively, our algorithm runs in time $\tilde{O}(LU^{1.87})$. This is polynomially faster than state-of-art SOS and semidefinite programming solvers, which achieve runtime $\tilde{O}(L^{0.5}\min\{U^{2.37}, L^{4.24}\})$. 

The centerpiece of our algorithm is a dynamic data structure for maintaining the inverse of the Hessian of the SOS barrier function under the \emph{polynomial interpolant basis}, which efficiently extends to multivariate SOS optimization, and requires maintaining spectral approximations to low-rank perturbations of \emph{elementwise (Hadamard) products}. This is the main challenge  and departure from recent IPM breakthroughs using inverse-maintenance, where low-rank updates to the slack matrix readily imply the same for the Hessian matrix.  
\end{abstract}

\newpage

\begin{toappendix}
\section{Initialization}\label{sec:init}
There exist standard techniques to transform a convex program to a form that has an easily obtainable strictly feasible point, see e.g.~\cite{ytm94}. We follow the initialization procedure presented by \cite{cls19} and \cite{jiang2020faster} and adapt to SOS optimization. Similar initialization lemma exists for WSOS optimization.

Let the matrix $P \in \R^{U \times L}$ and the operator $\Lambda: \R^U \to \R^{L \times L}$ that $\Lambda(s) = P^{\top} \diag(s) P$ be defined as in the interpolant basis paragraph of \Cref{sec_background}.
\begin{lemma}[Initialization]\label{lem:init}
Given an instance of $\eqref{SOS_program_primal_dual}$ that fulfills Slater's condition, and let $R$ be an upper bound on the $\ell_1$-norm of the primal feasible solutions, i.e.\ all primal feasible $x$ of \eqref{SOS_program_primal_dual} fulfill $\|x\|_1 \le R$, and let $\delta \in (0,1)$. We define
\begin{equation*}
\ov{A} = 
\begin{bmatrix}
A & 0 & \frac{1}{R}b - A\ov g^0 \\
\1_U^\top & 1 & 0
\end{bmatrix}
\in \R^{(m+1)\times (U+2)}
,\; 
\ov{b} = 
\begin{bmatrix}
\frac{1}{R}b \\
1 + \pr{\1_U}{ \ov g^0}
\end{bmatrix}
\in \R^{m + 1}, 
\text{and }
\ov{c} = \begin{bmatrix}
\frac{\delta}{\|c\|_\infty} c \\ 0 \\ 1
\end{bmatrix}
\in \R^{U + 2},
\end{equation*}
and let 
\begin{equation*}
    \ov{x}^0 = 
    \begin{bmatrix} 
        \ov g^0 \\ 1 \\ 1
    \end{bmatrix} 
    \in \R^{U + 2}, 
    \;
    \ov{y}^0 = 
    \begin{bmatrix}
    0_m \\
    -1 
    \end{bmatrix}
    \in \R^{m + 1}, 
    \text{and }
    \ov{s}^0 = 
    \begin{bmatrix}
    \1_U + \frac{\delta}{\|c\|_\infty} c \\
    1 \\ 
    1
    \end{bmatrix}
    \in \R^{U + 2},
\end{equation*}
where $\ov g^0 = g_{\Sigma^*}(\ov s^0_{[:U]}) \in \R^U$ for the gradient function $g_{\Sigma^*}(s) := \diag(P(P^\top \diag(s) P)^{-1} P^\top)$ that maps from $\R^U$ to $\R^U$. 
This defines the auxiliary primal-dual system 
\begin{equation}
  \label{auxiliary_SOS_program_primal_dual}
  \tag{Aux-SOS}
  \begin{aligned}
  \min \; &\pr{\ov c}{\ov x} \quad \\
  \ov A \ov x& = \ov b \\
  \ov x &\in \Sigma_{n,2d} \times \R_{\geq 0}^2\, ,\\
  \end{aligned}
  \quad\quad\quad
  \begin{aligned} 
  \max \; & \pr{\ov y}{\ov b} \\
  \ov A^\top \ov y + \ov s &= \ov c \\
  \ov s &\in \Sigma_{n,2d}^* \times \R_{\geq 0}^2\, . \\
  \end{aligned}
\end{equation}
Then $(\ov x^0, \ov y^0, \ov s^0)$ are feasible to the auxiliary system \eqref{auxiliary_SOS_program_primal_dual}. 

Further, under the canonical barrier (we use $\ov{a}_{i}$ to denote the $i$-th column of $\ov{A}$):
\begin{equation}\label{eq:init_barrier}
 \ov F_\eta(\ov{y}) = -\eta \pr{\ov{y}}{\ov{b}} - \log \det \Big(\Lambda\big((\ov{c} - \ov{A}^\top \ov{y})_{[:U]}\big)\Big) - \log (\ov{c}_{U+1} - \pr{\ov{a}_{U+1}}{\ov{y}}) - \log(\ov{c}_{U+2} - \pr{\ov{a}_{U+2}}{\ov{y}}),
\end{equation}
we have that $\|\ov g_{\eta^0}(\ov{y}^0)\|_{\ov H(\ov{y}^0)^{-1}} = 0$ for $\eta^0 = 1$. 

Further, for any solution $(\ov x, \ov y, \ov s)$ to \eqref{auxiliary_SOS_program_primal_dual} with duality gap $\le \delta^2$, its restriction $\wh{x} := \ov{x}_{[:U]}$ fulfills 
\begin{equation*}
\begin{aligned}
    \pr{c}{\wh{x}} &\le \min_{Ax = b, x \in \Sigma_{n,2d}} \pr{c}{x} +  \delta \cdot R \|c\|_\infty, \\
    \|A \wh{x} - b\|_1 &\le 8\delta L \cdot (LR \|A\|_\infty + \|b\|_1),\\
    \wh x &\in \Sigma_{n,2d}.
    \end{aligned}
\end{equation*}
\end{lemma}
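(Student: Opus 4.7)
The lemma breaks into three independent claims---feasibility of $(\ov x^0, \ov y^0, \ov s^0)$, centrality at $\eta^0 = 1$, and the primal-recovery guarantee---and I would address them in order. Feasibility is a direct block computation: the first $m$ rows of $\ov A \ov x^0$ give $A\ov g^0 + (\tfrac{1}{R}b - A\ov g^0) = \tfrac{1}{R}b$, and the last row gives $\pr{\1_U}{\ov g^0} + 1 = 1 + \pr{\1_U}{\ov g^0}$; dual feasibility $\ov A^\top \ov y^0 + \ov s^0 = \ov c$ is verified column by column in the same way. Cone membership $\ov x^0 \in \Sigma_{n,2d} \times \R_{\ge 0}^2$ uses that $\ov g^0 = g_{\Sigma^*}(\ov s^0_{[:U]})$ is the gradient of the SOS dual barrier and therefore lies in the interior of $\Sigma_{n,2d}$, while $\ov s^0 \in \Sigma_{n,2d}^* \times \R_{\ge 0}^2$ follows since $\1_U + \tfrac{\delta}{\|c\|_\infty} c$ is a strictly positive perturbation of $\1_U \in \intr \Sigma_{n,2d}^*$.

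For centrality, I would compute $\ov g_\eta(\ov y^0) = \nabla \ov F_\eta(\ov y^0)$ in closed form. By the chain rule and the definition $g_{\Sigma^*}(s) = \diag(P (P^\top \diag(s) P)^{-1} P^\top)$, the gradient of $-\log\det \Lambda\big((\ov c - \ov A^\top \ov y)_{[:U]}\big)$ with respect to $\ov y$ is $\ov A_{:,[:U]} \cdot g_{\Sigma^*}\big((\ov c - \ov A^\top \ov y)_{[:U]}\big)$; at $\ov y = \ov y^0$ this equals $\ov A_{:,[:U]} \ov g^0$ by the very definition of $\ov g^0$. The two scalar-log terms contribute $\ov a_{U+1}$ and $\ov a_{U+2}$ respectively, since both slacks $\ov c_{U+1} - \pr{\ov a_{U+1}}{\ov y^0}$ and $\ov c_{U+2} - \pr{\ov a_{U+2}}{\ov y^0}$ evaluate to $1$. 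Summing the three pieces yields exactly $\ov A \cdot (\ov g^0;\, 1;\, 1) = \ov A \ov x^0 = \ov b$, so $\ov g_{\eta^0}(\ov y^0) = -\eta^0 \ov b + \ov b = 0$ when $\eta^0 = 1$, and the Hessian-norm bound follows trivially.

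The hard part is the recovery claim, which I would tackle via weak duality against a cleverly constructed auxiliary-feasible point. Taking any optimum $x^*$ of the original primal, the triple $\tilde x := (x^*/R,\; 1 + \pr{\1_U}{\ov g^0} - \tfrac{1}{R}\1_U^\top x^*,\; 0)$ is feasible for \eqref{auxiliary_SOS_program_primal_dual} (nonnegativity of the middle coordinate uses $\|x^*\|_1 \le R$), with auxiliary objective $\pr{\ov c}{\tilde x} = \tfrac{\delta}{R\|c\|_\infty}\OPT$. For any $(\ov x, \ov y, \ov s)$ with duality gap $\le \delta^2$, weak duality gives $\pr{\ov c}{\ov x} \le \pr{\ov c}{\tilde x} + \delta^2$. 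Expanding the left side via $\pr{\ov c}{\ov x} = \tfrac{\delta}{\|c\|_\infty}\pr{c}{\wh x} + \ov x_{U+2}$ and using $\ov x_{U+2} \ge 0$ yields, after rescaling by $\|c\|_\infty/\delta$, the objective bound. The same identity upper-bounds the ``big-M'' slack $\ov x_{U+2}$; substituting it into the first $m$ rows of $\ov A \ov x = \ov b$, rewritten as $A\wh x - \tfrac{1}{R}b = -\ov x_{U+2}(\tfrac{1}{R}b - A\ov g^0)$, yields the approximate-feasibility bound once we also control $\|A\ov g^0\|_\infty$ through $\|\ov g^0\|_1 \le L$ (a standard trace identity $\1_U^\top \ov g^0 = \tr(\Lambda(\ov s^0_{[:U]})^{-1} P^\top P)$ together with a dimension bound). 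Cone membership $\wh x \in \Sigma_{n,2d}$ is inherited directly from $\ov x \in \Sigma_{n,2d} \times \R_{\ge 0}^2$. The main challenge in this step is purely bookkeeping---tracking the constants $L, R, \|A\|_\infty, \|b\|_1$ through the chain of inequalities---following the Big-M initialization template of Cohen--Lee--Song and Jiang--Song--Weinstein, with $\Sigma_{n,2d}^*$ and $g_{\Sigma^*}$ playing the roles of $\R^U_{\ge 0}$ and $s \mapsto 1/s$ respectively.
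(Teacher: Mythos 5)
Your proposal is correct and follows essentially the same route as the paper's own proof: the same block-wise feasibility check plus cone-membership via the barrier gradient, the same closed-form computation showing $\ov g_{\eta^0}(\ov y^0) = -\eta^0\ov b + \ov A\ov x^0 = 0$, and the same weak-duality argument against the lifted optimum $(x^*/R,\,\cdot\,,0)$ combined with the trace identity $\|\ov g^0\|_1 = (1\pm\delta)L$ and the bound on the big-M slack $\ov x_{U+2}$. The only wrinkle --- one the paper itself shares, since its proof silently switches to $\wh x := R\,\ov x_{[:U]}$ --- is that matching the stated constant $\delta R\|c\|_\infty$ requires rescaling by $R\|c\|_\infty/\delta$ rather than the $\|c\|_\infty/\delta$ you wrote.
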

\begin{proof}
Let $\operatorname{proj}$ denote the orthogonal projection matrix that maps a matrix onto its image space. Then note that 
\begin{equation}\label{eq:g_bound_s_0}
\begin{aligned}
    \|\ov{g}^0\|_1 &= \Big\|\diag\Big(P\Big(P^\top \diag(\1_U + \frac{\delta}{\|c\|_\infty} c) P\Big)^{-1} P^\top\Big)\Big\|_1 \\
    &= \tr \Big( \diag\big(1 + \frac{\delta}{\|c\|_\infty} c\big)^{-1} \cdot  \operatorname{proj}\big(\diag(\1_U + \frac{\delta}{\|c\|_\infty} c)^{1/2} \cdot P\big)\Big) \\
    & = (1 \pm \delta)L,
\end{aligned}
\end{equation}
where the last inequality follows as $\tr(\operatorname{proj}(P)) = \rk(P) = L$. 

Straightforward calculations show that $\ov{A}\ov{x}^0 = \ov{b}$ and $\ov{A}^\top \ov y^0 + \ov s^0 = \ov c$.
Further, note that $\ov{s}^0 > 0$ as $\delta < 1$ and therefore $\ov{s}^0 \in \R_+^{U+2} \subset \Sigma_{n,2d}^* \times \R_+^2$. The containment $\R_+^U \subset \R_{\geq 0}^U \subset \Sigma_{n,2d}^*$ is clear by the characterisation $\Sigma_{n,2d}^* = \{s : \Lambda(s) = P^\top \diag(s) P \succeq 0\}$.

Further $\ov{x}^0_{[:U]} = g_{\Sigma^*}(s^0_{[:U])}) = \diag(P(P^\top \diag(\ov{s}^0_{[:U]}) P)^{-1} P^\top) \in \Sigma_{n,2d}$ is e.g.~shown in Proposition 3.3.3 in \cite{renegarMathematicalViewInteriorPoint2001}. We make it explicit by showing that $\langle \ov{x}^0_{[:U]}, \hat s \rangle \ge 0$  for any $\hat s \in \Sigma_{n,2d}^*$. Indeed, 
\begin{equation*}
\begin{aligned}
    \langle \ov{x}^0_{[:U]}, \hat s \rangle &= \pr{\diag(P(P^\top \diag(\ov{s}^0_{[:U]}) P)^{-1} P^\top)}{\hat s} \\
    &= \tr\big( (P(P^\top \diag(\ov{s}^0_{[:U]}) P)^{-1} P^\top) \cdot \diag(\hat s)\big) \\
    &= \tr{\big((P^\top \diag(\ov{s}^0_{[:U]}) P)^{-1} (P^\top\diag(\hat s) P) \big)}\\
    &\ge 0,
\end{aligned}
\end{equation*}
where the last inequality follows as the argument in $\tr$ is a product of positive semi-definite matrices.

Define $\ov{s} = \ov{c} - \ov{A}^{\top} \ov{y}$. The corresponding barrier (as defined in \eqref{eq:init_barrier}) is
\begin{equation*}
\begin{aligned}
    \ov F_\eta(\ov{y}) &= -\eta \pr{\ov{y}}{\ov{b}} - \log \det \Big(\Lambda\big((\ov{c} - \ov{A}^\top \ov{y})_{[:U]}\big)\Big) - \log (\ov{c}_{U+1} - \pr{\ov{a}_{U+1}}{\ov{y}}) - \log(\ov{c}_{U+2} - \pr{\ov{a}_{U+2}}{\ov{y}}) \\
    &= -\eta \pr{\ov{y}}{\ov{b}} - \log \det \Big(\Lambda\big(\ov{s}_{[:U]}\big)\Big) - \log (\ov{c}_{U+1} - y_{m+1}) - \log(\ov{c}_{U+2} - \pr{\frac{1}{R}b - A\ov g^0}{\ov{y}_{[:m]}}), 
\end{aligned}
\end{equation*}
and gradient of the system is
\begin{equation*}
\begin{aligned}
    \ov g_\eta(\ov{y}) &= 
    \begin{bmatrix}
    -\frac{\eta}{R} b + A g_{\Sigma^*}(\ov{s}_{[:U]}) + \frac{1}{\ov{s}_{U+2}}(\frac{1}{R}b - A \ov g^0)
    \\ 
    -\eta(1 + \pr{\1_U}{\ov g^0}) + \pr{\1_U}{g_{\Sigma^*}(\ov s_{[:U]})} + \frac{1}{\ov{s}_{U+1}}
   \end{bmatrix}.
\end{aligned}
\end{equation*}
Simple algebraic calculations now show that for $\eta^0 = 1$ we have $\ov g_{\eta^0}(\ov{y}^0) = 0_{m+1}$, and so in particular $\|\ov g_{\eta^0}(\ov{y}^0)\|_{\ov H(\ov{y}^0)^{-1}} \le \epsilon_N$ as necessary in \Cref{lem:invariant_newton}.
It remains to show that near-optimal solutions to the modified problem correspond to near-optimal solutions to the original problem. 

Let $\OPT$ resp.~$\ov{\OPT}$ be the objective values of the original resp.~modified program:
\begin{equation*}
\OPT := \min_{Ax = b, x \in \Sigma_{n,2d}} \langle c,x \rangle, \qquad \ov{\OPT} := \min_{\ov{A}x = \ov{b}, \ov{x} \in \Sigma_{n,2d} \times \R^2 } \langle \ov{c},\ov{x} \rangle.
\end{equation*}
Given any optimal $x^* \in \R^U$ of the original primal of \eqref{SOS_program_primal_dual}, consider the following $\ov{x}^* \in \R^{U+2}$ fulfilling $\ov{A} \ov{x}^* = \ov{b}, \ov{x}^* \in \Sigma_{n,2d} \times \R_{\geq 0}^2$: 
\begin{equation*}
\ov{x}^* = \begin{bmatrix}
\frac{1}{R} x^* \\
\pr{\1_U}{\ov g^0} + 1 - \frac{1}{R} \pr{\1_U}{x^*} \\
0
\end{bmatrix}.
\end{equation*}
Note that indeed $\ov x^*_{U+1} \in \R_{\geq 0}$ as by \eqref{eq:g_bound_s_0} we have $x^*_{U+1} = \pr{\1_U}{\ov g^0} + 1 - \frac{1}{R} \pr{\1_U}{x^*} \ge (1-\delta )L + 1 - 1 \ge 0$ for $\delta < 1$ and by choice of $R$. 
Therefore
\begin{equation}
    \label{eq:opt_bound}
    \ov{\OPT} \le \pr{\ov c}{\ov{x}^*} =  
    \begin{bmatrix}
\frac{\delta}{\|c\|_\infty} c^\top & 0 & 1
\end{bmatrix} \cdot  
\begin{bmatrix}
\frac{1}{R} x^* \\
\pr{\1_U}{\ov g^0} + 1 - \frac{1}{R} \pr{\1_U}{x^*} \\
0
\end{bmatrix} = \frac{\delta \cdot \pr{c}{x^*}}{R\|c\|_\infty} = \frac{\delta \cdot \OPT}{R\|c\|_\infty}.
\end{equation}

Now given any feasible primal-dual solution $(\ov x, \ov y, \ov s)$ to \eqref{auxiliary_SOS_program_primal_dual} with duality gap $\le \delta^2$ we have
\begin{equation}
\label{eq:cx_bound}
    \frac{\delta}{\|c\|_\infty} \pr{c}{\ov x_{[:U]}} \le \frac{\delta}{\|c\|_\infty} \pr{c}{\ov x_{[:U]}} + \ov x_{U+2} = \pr{\ov c}{\ov x}  \le \ov \OPT + \delta^2 \le \frac{\delta}{R\|c\|_\infty} \OPT + \delta^2,
\end{equation}
where the first inequality uses $\ov x_{U+2} \in \R_{\geq 0}$ and the last inequality follows from \eqref{eq:opt_bound}. So, for $\wh x := R \cdot \ov{x}_{[:U]}$, using \eqref{eq:cx_bound} we have that 
\begin{equation*}
    \pr{c}{\wh{x}} = R \pr{c}{\ov{x}_{[:U]}} = \frac{R\|c\|_\infty}{\delta} \frac{\delta}{\|c\|_\infty} \pr{c}{\ov{x}_{[:U]}} \le \frac{R\|c\|_\infty}{\delta} \Bigl(\frac{\delta}{R\|c\|_\infty} \OPT + \delta^2\Bigr) = \OPT + \delta R\|c\|_\infty.
\end{equation*}
Recall that $\R_{\geq 0}^U \subset \Sigma_{n,2d}^*$ and therefore $\Sigma_{n,2d} \subset \R_{\geq 0}^U$. In particular $\ov x_{[:U]} \ge 0$ and so using the equality $\pr{\1_U}{\ov x_{[:U]}} + \ov x_{U+1} = 1 + \pr{\1_U}{ \ov g^0}$ (which follows from $\ov{A}\ov{x} = \ov{b}$ since $(\ov{x}, \ov{y}, \ov{s})$ is feasible) and $\ov x_{U+1} \ge 0$ we can further bound
\begin{equation*}
   \|\ov x_{[:U]}\|_1 = \pr{\1_U}{\ov x_{[:U]}} \le 1 + \pr{\1_U}{\ov g^0} \le 1 +(1 + \delta)L \le 2 L\, ,
\end{equation*}
where the penultimate inequality used \eqref{eq:g_bound_s_0}. So we can further bound
\begin{equation}
\label{eq:bound_hat_x}
\frac{1}{\|c\|_\infty} \pr{c}{\ov x_{[:U]}} \ge - \frac{1}{\|c\|_\infty} \|c\|_\infty \|\ov x_{[:U}\|_1 
\ge - 2L.
\end{equation}
Therefore using \eqref{eq:cx_bound} and \eqref{eq:bound_hat_x} we get that
\begin{equation}\label{eq:bound_ov_x_U+2}
\ov x_{U+2} \le \frac{\delta}{R\|c\|_\infty} \OPT + \delta^2 - \frac{\delta}{\|c\|_\infty} \pr{c}{\ov x_{[:U]}} \le \frac{\delta}{R\|c\|_\infty} \OPT + \delta^2 + 2\delta L \le \delta + \delta^2 + 2\delta L \le 4\delta L .
\end{equation}
where the penultimate inequality used $\OPT \le R\|c\|_\infty$ and the last inequality uses $\delta \le 1$.

Feasibility of $\ov x$ to $\ov A \ov x = \ov b$ shows that $A \ov x_{[:U]} + (\frac{1}{R}b - A\ov{g}^0)\ov{x}_{U+2} = \frac{1}{R} b$
and we therefore have 
\begin{equation*}
\begin{aligned}
    \|A\wh{x} - b \|_1 &= \|RA\ov{x}_{[:U]} - b\|_1 = \|(b - RA \ov{g}^0)\ov x_{U+2}\|_1 \le 4\delta L\cdot(R\|A\|_\infty \|\ov g^0\|_1 + \|b\|_1) \\
    & \le 8\delta L\cdot (LR \|A\|_\infty + \|b\|_1).
    \end{aligned}
\end{equation*}
where the penultimate inequality used \eqref{eq:bound_ov_x_U+2}, and the last inequality follows from \eqref{eq:g_bound_s_0}.
\end{proof}

It is worth noting that the system $\ov{A}\ov x = \ov b, \ov x \in \Sigma_{n,2d} \times \R_{\geq 0}^2$ is not an instance of $\eqref{SOS_program_primal_dual}$ anymore. This is in contrast for similar initialization techniques for Linear Programming \cite{cls19} or semi-definite Programming \cite{jiang2020faster}. Nonetheless it is not hard to see that optimization over product cones as in this case $\Sigma_{n,2d} \times \R_{\geq 0}^2$ can be done efficiently if the optimization over the factors can be done efficiently, see e.g.~the application in  \cite{lsz19}.
Alternatively note that the dual cone $(\Sigma_{n,2d} \times \R_{\geq 0}^2)^* = \Sigma_{n,2d}^* \times \R_{\geq 0}^2$. For $s \in \Sigma_{n,2d}^*$ we used the barrier $F(s) = - \log(\det(P^\top\diag(s)P))$ throughout the paper. This barrier could easily extend via setting
\begin{equation*}
    \ov{P} := 
    \begin{bmatrix}
    P & 0 & 0 \\
    0 & 1 & 0 \\
    0 & 0 & 1 \\
    \end{bmatrix}
\end{equation*}
and setting the barrier for $\ov s \in \Sigma_{n,2d}^* \times \R_{\geq 0}^2$ as 
\begin{equation*}
   \ov{F}(\ov s) = - \log\det(\ov{P}^\top \diag(\ov{s}) \ov{P}) = - \log(\det(P^\top\diag(\ov{s}_{[:U]})P)) - \log \ov{s}_{U+1} - \log \ov{s}_{U+2},
\end{equation*}
which recovers the standard barrier for the product cone. Our algorithm \Cref{alg:barrier} can now equally be run with $\ov{F}$ instead of $F$, where then $P$ is replaced by $\ov{P}$.
 \section{Discussion on representation}
In this paper we consider the dual formulation of \eqref{SOS_program_primal_dual}, where $A \in \R^{m \times U}$, $b \in \R^m$, $c \in \R^U$:
\begin{equation}\label{eq:sos_dual}
    \begin{aligned}
  \max \; & \pr{y}{b} \\
  A^\top y + s &= c \\
  s &\in \Sigma_{n,2d}^*\, .
 \end{aligned}
\end{equation}

Here $m$ is the number of constraints, and w.l.o.g. we assume $m \leq U$.

We remark that \cite{py19} instead consider the following formulation:

\begin{align*} 
  \min \; & \pr{c}{x} \\
  A x &= b \\
  x &\in \Sigma_{n,2d}^*\, .
\end{align*}

These two formulations are in fact equivalent, because Eq.~\eqref{eq:sos_dual} is equivalent to

\begin{align*}
  \min \; &\pr{\hat c}{x} \quad \\
  \hat Ax& = \hat b \\
  x &\in \Sigma_{n,2d}^*\,.
\end{align*}

Here we define $\hat{A} \in \R^{(U-m) \times U}$ to be a matrix that satisfies $\ker(\hat A) = \im(A^\top)$, i.e., the rows of $\hat{A}$ are the orthogonal complement of the rows of $A$. Further we define $\hat c \in \R^U$ to be any vector that satisfies $A \hat c = b$. Finally, we define $\hat b := \hat A c \in \R^{U-m}$.  

\subsection{SOS under the monomial basis}\label{sec:other_basis}

In \cite{py19}, the advantages and disadvantages of three different bases (monomial basis, Chebyshev basis, and interpolant basis) are discussed. Further, explicit expressions of $\Lambda$ (defined in \Cref{prop:nesterov_dual_SOS}) are given for all three bases. Apart from the advantage of its computational efficiency, \cite{py19} chooses interpolant basis in their algorithm also because the interpolant basis representation is numerically stable, and this is required for practical algorithms. As we mainly focus on the theoretical running time of SOS algorithms, in this section we further justify the choice of the interpolant basis in our algorithm. We want to add to the exposition of \cite{py19} and stress that the standard monomial basis does not seem suitable even for theoretical algorithms. It is unclear whether in the monomial basis an amortized runtime faster than the naive $O(U^\omega)$ can be achieved. (Similar argument holds for the Chebyshev basis.) 

The Hessian in the monomial basis is given by
\begin{equation*}
H_{ij}(s) = \sum_{a + b = i, k + \ell = j} [\Lambda(s)^{-1}]_{ak}[\Lambda(s)^{-1}]_{b\ell}.
\end{equation*}
It is unclear how low-rank updates techniques could be applied here as low rank updates to $\Lambda$ do not translate to low-rank updates to the Hessian $H$. Further, the structure of $\Lambda$ itself in the multivariate case is far less understood than its counterpart in the interpolant basis, which we are going to elaborate on in the remainder of this section.

For $n$ variables and degree $d$ the monomial basis elements correspond to the terms $x_1^{\alpha_1} \cdot \ldots \cdot x_n^{\alpha_n}$ for $\alpha = (\alpha_1, \ldots, \alpha_n) \in \mathbb{N}^n$, $\|\alpha\|_1 \le d$. When choosing both $\mathbf{p}$ and $\mathbf{q}$ to be the monomial basis, $\Lambda$ has a special structure. For any basis elements $p_1, p_2 \in \mathbf{p}$ we have that $p_1 p_2$ itself is a monomial in $\mathcal V_{n,2d}$. As such, the coefficients $\lambda_{ij} \in \R^U$ have the special form that $\lambda_{ij}$ is zero everywhere but in the coordinate that corresponds to the element in $\mathbf{q}$ equalling $p_1p_2$. As $\Lambda$ is also uniquely defined by the images of the elements in $\mathbf q$, we can write $E_u \in \R^{L \times L}$ as $E_u := \Lambda(e_u)$ where $e_u$ is the vector that is zero everywhere but in position $u$ for some $u \in [U]$. Let $q_u \in \mathbf{q}$ be the associated basis polynomial. Then we see that $(E_u)_{ij} = 1_{[p_ip_j = q_u]}$. It also follows that every matrix $S \in \im(\Lambda)$ has at most $U$ different entries and each entry is uniquely defined by the corresponding basis element in $\mathbf{q}$. While it is not known how this special structure could be exploited in general, a speedup is known for the univariate case $n = 1$. Here, $\im(\Lambda)$ are all Hankel matrices: We have $\mathbf{q} = \{1, x, \ldots, x^{2d}\}$ and $\mathbf{p} = \{1, x, \ldots, x^d\}$ so for any vector $u \in \R^{2d+1}$ we have that 

\begin{equation*}
    \Lambda(v) = 
    \begin{bmatrix}
  v_{0} & v_{1} & v_{2} & \ldots & \ldots  &v_{d}  \\
  v_{1} & v_2 &  &  & &\vdots \\
  v_{2} &  &  & & & \vdots \\ 
 \vdots & & & &  & v_{2d-2}\\
 \vdots & &  & & v_{2d-2}&  v_{2d-1} \\
v_{d} &  \ldots & \ldots & v_{2d-2} & v_{2d-1} & v_{2d}
\end{bmatrix}.
\end{equation*}
These highly structured matrices are known to be invertible in time $\tilde O(d^2)$ (see e.g.\ \cite{Pan2001}).

As mentioned above even the bivariate case becomes far more complicated. Let $n = 2$ and $d = 2$, and pick the ordered bases as 
\begin{equation*}
\mathbf q = \{1, x, y, x^2, xy, y^2, x^3, x^2y, xy^2, y^3, x^4, xy^3, x^2y^2, xy^3, y^4\}\, , ~~
\mathbf p = \{1, x, y, x^2, xy, y^2\}\, .
\end{equation*}
 Then for $v \in \R^{n+ 2d \choose 2d} = \R^{15}$ we get for $\mathbf p \mathbf p^\top$ and the corresponding matrix $\Lambda$ that

\begin{equation*}
    \mathbf p \mathbf p^\top =
    \begin{bmatrix}
    1 & x & y & x^2 & xy & y^2\\
    x & x^2 & xy & x^3 & x^2y & xy^2\\
    y & xy & y^2 & x^2y & xy^2 & y^3 \\
    x^2 & x^3 & x^2y & x^4 & x^3y & x^2y^2 \\
    xy & x^2y & xy^2 & x^3y & x^2y^2 & xy^3 \\
    y^2 & xy^2 & y^3 & x^2y^2 & xy^3 & y^4 
   \end{bmatrix}, \quad 
    \Lambda(v) = 
    \begin{bmatrix}
    v_0 & v_1 & v_2 & v_3 & v_4 & v_5 \\
    v_1 & v_3 & v_4 & v_6 & v_7 & v_8 \\
    v_2 & v_4 & v_5 & v_7 & v_8 & v_9 \\
    v_3 & v_6 & v_7 & v_{10} & v_{11} & v_7 \\
    v_4 & v_7 & v_8 & v_{11} & v_{12} & v_{13} \\
    v_5 & v_8 & v_9 & v_7 & v_{13} & v_{14} \\
   \end{bmatrix}.
\end{equation*}
While $\Lambda(v)$ still has some structure it is unclear how $(\Lambda(v))^{-1}$ could be computed more efficiently than in matrix multiplication time.

\section{Proof of amortization lemma}\label{sec:proof_amortize}
We include the proof of Lemma~\ref{lem:general_amortize_tool} for completeness. The main difference between this proof and that of \cite{hjst21} is that we cut off at $U/L$ instead of $L$.

Our proof makes use of the following two facts about $\omega$ and $\alpha$ (Lemma~A.4 and Lemma~A.5 of \cite{cls19}).

\begin{fact}[Relation of $\omega$ and $\alpha$]\label{fact:bound_omega_alpha}
$\omega \leq 3 - \alpha$.
\end{fact}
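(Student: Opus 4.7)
The plan is to use the standard reduction from square matrix multiplication to a batch of rectangular products, exploiting the definition of $\alpha$. Recall that $\alpha$ is the supremum of all $a \in [0,1]$ for which the product of an $n \times n^a$ matrix with an $n^a \times n$ matrix can be computed in $n^{2+o(1)}$ arithmetic operations. I will fix any $\alpha' < \alpha$ for which this holds and derive $\omega \le 3 - \alpha'$; letting $\alpha' \to \alpha$ gives the claim.

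The key step is a block decomposition along the shared inner dimension. Given $A, B \in \R^{n \times n}$, partition the columns of $A$ into $n^{1-\alpha'}$ blocks $A_1, \ldots, A_{n^{1-\alpha'}} \in \R^{n \times n^{\alpha'}}$ and partition the rows of $B$ conformally into blocks $B_1, \ldots, B_{n^{1-\alpha'}} \in \R^{n^{\alpha'} \times n}$. Then one writes
\begin{equation*}
    AB \;=\; \sum_{i=1}^{n^{1-\alpha'}} A_i B_i,
\end{equation*}
which expresses the $n \times n$ product as a sum of $n^{1-\alpha'}$ rectangular products of the exact shape covered by the definition of $\alpha'$.

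To finish, each rectangular product $A_i B_i$ costs $n^{2+o(1)}$ by the choice of $\alpha'$, and summing the $n^{1-\alpha'}$ resulting $n \times n$ matrices costs $O(n^{3-\alpha'})$ additions. The total cost is $n^{1-\alpha'} \cdot n^{2+o(1)} = n^{3-\alpha'+o(1)}$, so $\omega \le 3 - \alpha'$. Taking $\alpha' \uparrow \alpha$ yields $\omega \le 3 - \alpha$.

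There is no real obstacle: the argument is a one-line block decomposition, and the only mildly delicate point is the usual $o(1)$-bookkeeping in the definition of $\alpha$, which is handled by working with an arbitrary $\alpha' < \alpha$ and passing to the supremum. Since the fact is invoked as a black box from \cite{cls19}, this compact derivation is sufficient.
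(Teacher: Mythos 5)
Your proof is correct: the column/row block decomposition $AB = \sum_{i=1}^{n^{1-\alpha'}} A_i B_i$ with each $A_i B_i$ costing $n^{2+o(1)}$ is exactly the standard argument behind Lemma~A.4 of \cite{cls19}, which the paper invokes as a black box without reproving. Your handling of the supremum via $\alpha' < \alpha$ and passing to the limit is also the right way to deal with the $o(1)$ bookkeeping, so there is nothing to add.
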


\begin{fact}[Upper bound of $\Tmat(n,n,r)$]\label{fact:approx_Tmat_r_leq_n}
For any $r \leq n$, we have that
$\Tmat(n,n,r) \leq n^{2 + o(1)} + r^{\frac{\omega - 2}{1 - \alpha}} \cdot n^{2 - \frac{\alpha (\omega-2)}{(1 - \alpha)} + o(1)}$.
\end{fact}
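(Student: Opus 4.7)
The plan is to bound $\Tmat(n,n,r)$ by splitting into two regimes according to whether $r$ lies above or below the threshold $n^\alpha$, where $\alpha$ is the dual matrix-multiplication exponent satisfying $\Tmat(n,n,n^\alpha) = n^{2+o(1)}$. In the regime $r \le n^\alpha$, the bound $\Tmat(n,n,r) \le \Tmat(n,n,n^\alpha) = n^{2+o(1)}$ is immediate from monotonicity in the third dimension, which recovers the first summand $n^{2+o(1)}$ in the claimed inequality.

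For the regime $n^\alpha \le r \le n$, the idea is to interpolate between the two known exponents $\omega(1,1,\alpha) = 2$ and $\omega(1,1,1) = \omega$, where $\omega(1,1,\beta)$ denotes the exponent of multiplying an $n \times n$ matrix by an $n \times n^\beta$ matrix. Writing $r = n^\beta$ with $\beta \in [\alpha, 1]$, I would invoke the (standard in the fast-matrix-multiplication literature) convexity of the map $\beta \mapsto \omega(1,1,\beta)$ to obtain
\[
\omega(1,1,\beta) \;\le\; 2 + (\omega - 2)\cdot \frac{\beta - \alpha}{1 - \alpha},
\]
i.e.\ the linear chord between the two known endpoints is a valid upper bound. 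Exponentiating and substituting $n^{\beta} = r$ then cleanly separates the dependence on $r$ and on $n$:
\[
\Tmat(n,n,r) \;\le\; n^{\omega(1,1,\beta) + o(1)} \;\le\; r^{(\omega-2)/(1-\alpha)} \cdot n^{2 - \alpha(\omega-2)/(1-\alpha) + o(1)},
\]
which matches the second summand of the claim. Summing the bounds from the two regimes produces a single inequality valid for all $r \le n$.

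The main subtlety is the appeal to convexity of $\omega(1,1,\cdot)$: this is what allows the two pointwise identities $\omega(1,1,\alpha)=2$ and $\omega(1,1,1)=\omega$ (the latter by definition of $\omega$, the former by definition of $\alpha$) to be promoted into a uniform upper bound on the whole interval $[\alpha,1]$. Beyond this, the argument is bookkeeping: re-expressing $n^\beta$ as $r$ to extract the stated rate $r^{(\omega-2)/(1-\alpha)}$, absorbing the gap between the exponent $\omega(1,1,\beta)$ attainable by known algorithms and its upper bound into the $o(1)$ term, and merging the two regimes additively rather than via a case split so as to obtain a single closed-form upper bound.
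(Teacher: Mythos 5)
Your proposal is correct and is essentially the standard argument behind this fact, which the paper itself does not reprove but cites as Lemma~A.5 of \cite{cls19}: a case split at $r = n^{\alpha}$, using the definition of the dual exponent $\alpha$ for small $r$, and the convexity of $\beta \mapsto \omega(1,1,\beta)$ (Lotti--Romani) to interpolate linearly between $\omega(1,1,\alpha)=2$ and $\omega(1,1,1)=\omega$ for $r = n^{\beta}$ with $\beta \in [\alpha,1]$. The algebra extracting $r^{(\omega-2)/(1-\alpha)} \cdot n^{2-\alpha(\omega-2)/(1-\alpha)}$ and the additive merging of the two regimes are both sound.
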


\begin{lemma}[Restatement of Lemma~\ref{lem:general_amortize_tool}]
Let $t$ denote the total number of iterations. Let $r_i \in [L]$ be the rank for the $i$-th iteration for $i \in [t]$. Assume $r_i$ satisfies the following condition:
for any vector $g \in \R_+^L$ which is non-increasing, we have 
\begin{align*}
\sum_{i=1}^t r_i \cdot g_{r_i} \leq O(t \cdot \|g\|_2).
\end{align*}

If the cost in the $i$-th iteration is $O(\Tmat(U, U, \min\{L r_i, U\}))$, when $\alpha \geq 5 - 2 \omega$, the amortized cost per iteration is 
\begin{equation*}
U^{2 + o(1)} + U^{\omega - 1/2 + o(1)} \cdot L^{1/2}.
\end{equation*}
\end{lemma}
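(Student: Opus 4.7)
The plan is to apply \Cref{fact:approx_Tmat_r_leq_n} to each iteration cost and then amortize the resulting sum against the hypothesis using a carefully chosen non-increasing test vector $g$. Throughout, set $\beta := (\omega - 2)/(1-\alpha)$ and $s_i := \min\{L r_i, U\}$. Since $r_i \le L$ we always have $s_i \le U$, so \Cref{fact:approx_Tmat_r_leq_n} bounds the $i$-th iteration cost by
\[ \Tmat(U,U,s_i) \;\le\; U^{2+o(1)} + s_i^{\beta} \cdot U^{2 - \alpha\beta + o(1)}. \]
Summed over $t$ iterations, the first summand amortizes to $U^{2+o(1)}$, which is already the first term in the claim. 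It remains to bound $\sum_{i=1}^t s_i^{\beta}$.

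The key idea is to define $g_k := \min\{Lk, U\}^{\beta}/k$ for $k \in [L]$, constructed so that $r_i\, g_{r_i} = s_i^{\beta}$ exactly. Piecewise, $g_k = L^{\beta} k^{\beta-1}$ for $k \le U/L$ and $g_k = U^{\beta}/k$ for $k > U/L$, with the two pieces agreeing at the cutoff with common value $L \cdot U^{\beta-1}$. Both pieces are non-increasing whenever $\beta \le 1$, which follows from \Cref{fact:bound_omega_alpha} since $\omega \le 3 - \alpha$ rearranges to $\beta \le 1$. The hypothesis thus yields $\sum_i s_i^{\beta} = \sum_i r_i g_{r_i} \le O(t \|g\|_2)$.

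Next, I compute $\|g\|_2^2$ by splitting the sum at $U/L$. The tail contributes $\sum_{k > U/L}^{L} U^{2\beta}/k^2 = \Theta(L \cdot U^{2\beta-1})$. The head $L^{2\beta} \sum_{k=1}^{U/L} k^{2\beta-2}$ is, up to a logarithmic factor absorbed in $o(1)$, dominated by its last term $L^{2\beta} (U/L)^{2\beta-1} = L \cdot U^{2\beta-1}$ precisely when $2\beta - 1 \ge 0$. The condition $\beta \ge 1/2$ is exactly the hypothesis $\alpha \ge 5 - 2\omega$, rewritten via $\beta = (\omega-2)/(1-\alpha)$. Hence $\|g\|_2 \le O(L^{1/2} U^{\beta - 1/2 + o(1)})$, so $\sum_i s_i^{\beta} \le O(t \cdot L^{1/2} U^{\beta - 1/2 + o(1)})$. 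Multiplying by the outer factor $U^{2 - \alpha\beta + o(1)}$ and using $\beta(1-\alpha) = \omega - 2$ gives an amortized second term of
\[ L^{1/2} \cdot U^{(\beta - 1/2) + (2 - \alpha\beta) + o(1)} \;=\; L^{1/2} \cdot U^{\beta(1-\alpha) + 3/2 + o(1)} \;=\; L^{1/2} \cdot U^{\omega - 1/2 + o(1)}, \]
matching the target.

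The main conceptual step, and the only real departure from the argument in \cite{hjst21}, is the choice of cutoff at $U/L$ inside $g$ rather than at $L$: it is forced by the $\min\{Lr_i, U\}$ structure of our per-iteration cost and is what balances the head and tail contributions of $\|g\|_2^2$ in the regime $\beta \ge 1/2$ singled out by the hypothesis $\alpha \ge 5 - 2\omega$. The remaining pieces—that the two halves of $g$ agree at the cutoff, that $\beta(1-\alpha) = \omega - 2$, and that the head/tail estimates go through with only $o(1)$ slack from logarithmic factors—are routine algebraic verifications.
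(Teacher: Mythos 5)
Your proof is correct and follows essentially the same route as the paper's: both apply Fact~\ref{fact:approx_Tmat_r_leq_n}, choose a non-increasing test vector with the cutoff at $U/L$, and bound $\|g\|_2$ by splitting the head and tail there; your $g_k = \min\{Lk,U\}^{\beta}/k$ is exactly $L^{\beta}$ times the paper's choice, a harmless rescaling since the amortization hypothesis is homogeneous in $g$. All the algebraic checks (monotonicity from $\beta\le 1$, the head/tail balance from $\beta\ge 1/2$, and the exponent arithmetic via $\beta(1-\alpha)=\omega-2$) match the paper's.
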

\begin{proof}
For $r_i$ that satisfies $r_i \leq U/L$, we have
\begin{equation}\label{eq:amortization_eq1}
\begin{aligned}
    \Tmat(U, U, L r_i) \leq &~ U^{2 + o(1)} + (L r_i)^{\frac{\omega-2}{1-\alpha}} \cdot U^{2 - \frac{\alpha(\omega-2)}{1-\alpha} + o(1)} \\
    = &~ U^{2 + o(1)} + U^{2 - \frac{\alpha(\omega-2)}{1-\alpha} + o(1)} \cdot L^{\frac{\omega-2}{1-\alpha}} \cdot r_i^{\frac{\omega-2}{1-\alpha}},
\end{aligned}
\end{equation}
where the first step follows from Fact~\ref{fact:approx_Tmat_r_leq_n}.

Define a sequence $g \in \R_+^L$ such that for $r \in [L]$, 
\begin{equation*}
\begin{aligned}
    g_r =
    \begin{cases}
    r^{\frac{\omega-2}{1-\alpha}-1} & \text{if~} r \leq U/L, \\
    (U/L)^{\frac{\omega - 2}{1 - \alpha}} \cdot r^{-1} & \text{if~} r > U/L.
    \end{cases}
\end{aligned}
\end{equation*}
Note that $g$ is non-increasing because $\frac{\omega-2}{1-\alpha} \leq 1$ (Fact~\ref{fact:bound_omega_alpha}). Then using the condition in the lemma statement, we have
\begin{equation}\label{eq:amortization_eq2}
\begin{aligned}
    \sum_{i=1}^t \min\{r_i^{\frac{\omega-2}{1-\alpha}}, (U/L)^{\frac{\omega-2}{1-\alpha}}\} = &~ \sum_{i=1}^t r_i \cdot g_{r_i} \\
    \leq &~ t \cdot \|g\|_2 \\
    \leq &~ t \cdot \Big(\int_{x=1}^{U/L} x^{\frac{2(\omega-2)}{1-\alpha}-2} \mathrm{d}x + (U/L)^{\frac{2(\omega - 2)}{1 - \alpha}} \cdot \int_{x=U/L}^L x^{-2} \mathrm{d}x \Big)^{1/2} \\
    \leq &~ t \cdot \Big( c \cdot (U/L)^{\frac{2(\omega-2)}{1-\alpha}-1} + (U/L)^{2(\frac{\omega - 2}{1 - \alpha})} \cdot (U/L)^{-1}\Big)^{1/2} \\
    = &~ t \cdot O((U/L)^{\frac{(\omega-2)}{1-\alpha}-1/2}),
\end{aligned}
\end{equation}
where the first step follows from the definition of $g \in \R^L$, the second step follows from the assumption $\sum_{t=1}^t r_i \cdot g_{r_i} \leq t \cdot \|g\|_2$ in the lemma statement,  
the third step follows from upper bounding the $\ell_2$ norm $\|g\|_2^2 = \sum_{r=1}^L g_r^2$, and the fourth step follows $\frac{2(\omega-2)}{1-\alpha} \geq 1$ when $\alpha \geq 5 - 2 \omega$, so the integral $\int_{x=1}^{U/L} x^{\frac{2(\omega-2)}{1-\alpha}-2} \mathrm{d}x = c \cdot x^{\frac{2(\omega-2)}{1-\alpha}-1}\big|_{1}^{U/L} = O\big((U/L)^{\frac{2(\omega-2)}{1-\alpha}-1}\big)$ where $c:= 1/(\frac{2(\omega-2)}{1-\alpha}-1)$. 

Thus we have
\begin{equation*}
\begin{aligned}
    \sum_{t=1}^t \Tmat(U, U, \min\{L r_i, U\})
    \leq & ~ \sum_{t=1}^t \Big( U^{2 + o(1)} + U^{2 - \frac{\alpha(\omega-2)}{1-\alpha} + o(1)} \cdot L^{\frac{\omega-2}{1-\alpha}} \cdot \min\{r_i^{\frac{\omega-2}{1-\alpha}}, (U/L)^{\frac{\omega-2}{1-\alpha}} \} \Big) \\
    = &~ t \cdot U^{2 + o(1)} + U^{2 - \frac{\alpha(\omega-2)}{1-\alpha} + o(1)} \cdot L^{\frac{\omega-2}{1-\alpha}} \cdot \sum_{t=1}^t \min\{r_i^{\frac{\omega-2}{1-\alpha}}, (U/L)^{\frac{\omega-2}{1-\alpha}}\} \\ 
    \leq &~ t \cdot U^{2 + o(1)} + U^{2 - \frac{\alpha(\omega-2)}{1-\alpha} + o(1)} \cdot L^{\frac{\omega-2}{1-\alpha}} \cdot t \cdot (U/L)^{\frac{(\omega-2)}{1-\alpha}-1/2} \\
    = &~ t \cdot (U^{2 + o(1)} + U^{\omega - 1/2 + o(1)} \cdot L^{1/2}),
\end{aligned}
\end{equation*}
where the first step follows from Eq.~\eqref{eq:amortization_eq1} and $\Tmat(U,U,U) = U^{\omega} = U^{2 - \frac{\alpha(\omega-2)}{1-\alpha}} \cdot L^{\frac{\omega-2}{1-\alpha}} \cdot (U/L)^{\frac{\omega-2}{1-\alpha}}$, the second step follows from moving summation inside, the third step follows from Eq.~\eqref{eq:amortization_eq2}, and the last step follows from adding the terms together.
\end{proof}

\end{toappendix}

\section{Introduction}

Polynomial optimization is a fundamental problem in many areas of applied mathematics, operations research, and  theoretical computer science, including combinatorial optimization \cite{BS11, LasserreNing, BHKKMP19}, statistical estimation \cite{HopkinsKPRSS17, Hopkins018}, experimental design \cite{Papp12}, control theory \cite{HessHLP16}, signal processing \cite{RohDV07}, power systems engineering \cite{GPF},  discrete geometry \cite{BV08, BBBCGKS09} and computational algebraic geometry \cite{AG09}. In the most basic formulation, we are given a collection of $k$ real $n$-variate polynomials $g_1,\cdots, g_k$ and an objective function $f: \R^n \to \R$, and the goal is to minimize $f$ over the set 
$\cS := \{t \in \R^n \mid \forall i \in \{1,\cdots, k\}: g_i(t) \geq 0\}$, that is, to find  
\begin{equation}\label{eq_poly_opt}
\inf_{t\in \R^n} \{f(t) \mid t\in \cS \},
\end{equation}
which is equivalent to checking polynomial nonnegativity $\sup_{c\in \R} \{c \mid f(t) - c \geq 0, ~\forall t \in \cS\}$. This is then equivalent to computing $\sup_{c \in \R} \{c \mid f-c \in \mathcal{K}(\cS)\}$, where $\mathcal{K}(\cS)$ denotes the convex cone of all polynomials of degree at most $\deg(f)$ that are non-negative on the set $\cS$. This is an instance of the more general conic programming: 

\begin{equation}
    \label{conic_program_primal}
    \begin{aligned}
    \min_{x \in \R^N} \;  &c^\top x \quad \\
    Ax& =b \\
    x &\in \K,\ \\
    \end{aligned}
    \end{equation}

where $\K \subset \R^N$ is some convex  cone\footnote{A subset $\K \subset \R^N$ is a convex cone if $\forall \; x,y \in \K$ and $\alpha, \beta\in\R_+$, $\alpha x+ \beta y \in \K$.}. The conic optimization problem over the cone $\mathcal{K}(\cS)$ is intractable in general because there is no simple  characterization of $\mathcal{K}(\cS)$. Nevertheless, there always exists an increasing family of convex cones of \emph{weighted sum-of-squares} polynomials that converges to any such cone $\mathcal{K}(\cS)$.

We first introduce the notion of \emph{sum-of-squares} (SOS) polynomials: 
Denoting by $\V_{n,d}$ the vector space of all $n$-variate polynomials of (total) degree at most $d$, a polynomial $p \in \V_{n,2d}$ is said to be \emph{sum-of-squares} (SOS) if it can be written as a finite sum of square polynomials, i.e., there exist $q_1,\cdots, q_\ell$ such that $p = \sum_{i=1}^\ell q_i^2$. The set $\Sigma_{n,2d}$ of SOS polynomials of degree at most $2d$ is a (proper) cone contained in $\V_{n,2d}$, of dimension $U := \dim(\V_{n,2d}) = {n+2d \choose d}$, as the vector space $\V_{n,2d}$ is isomorphic to $\R^{U}$. If $p$ can be written as $p = \sum_{i=1}^k g_i s_i$ for $s_1 \in \Sigma_{n,2d_1}, \cdots, \Sigma_{n,2d_k}$
and $k$ nonzero polynomials $\boldf := (f_1,\cdots, f_k)$, 
then it is said to be \emph{weighted sum-of-squares} (WSOS). 

\emph{Putinar's Positivstellensatz} \cite{PUTINAR99} states that under mild conditions, any polynomial $p$ that is non-negative on $\cS$ can be written as a WSOS polynomial $\sum_{i=1}^k g_i s_i$, albeit with (potentially) \emph{unbounded degree} $s_i$'s. In WSOS optimization we consider sum-of-squares polynomials $s_i$ with bounded degree, so the hierarchy of WSOS optimization with increasing degree (known as the Lasserre hierarchy) can be viewed as a tool for approximating general polynomial optimization. For more details of this approximation scheme for polynomial optimization, we refer the readers to the matextbooks \cite{l15, bpt13}.

This paper concerns algorithms for \emph{(W)SOS optimization}, which is the conic optimization program  \eqref{conic_program_primal} where the underlying cone $\K$ is the (W)SOS cone: 

\begin{equation}
    \label{SOS_program_primal}
    \begin{aligned}
    \min_{x \in \R^U} \; & c^\top x \quad \\
    Ax& =b \\
    x &\in \Sigma_{n,2d},\ \\
    \end{aligned}
    \end{equation}
where $x\in \vntd$ is the vector of coefficients which encodes the polynomial. Henceforth, we focus on the case where $\K = \Sigma_{n,2d}$ is the SOS cone, and discuss how to extend our algorithm for SOS optimization to WSOS in Section \ref{sec:wsos}.

The computational complexity of solving 
Problem \ref{SOS_program_primal} naturally depends on the dimensions 
\begin{equation}\label{eq_L_U} 
L := \dim(\V_{n,d}) = {n+d \choose d} \;\;\;\;\;, \;\;\;\; U := \dim(\V_{n,2d}) = {n+2d \choose 2d} 
\end{equation}
of the underlying vector spaces
(Note that $L \leq U\leq L^2$). We now turn to explain the previous  approaches for SOS optimization solvers.  

\paragraph*{SOS Optimization as SDPs}
A fundamental fact is that the \emph{dual} SOS cone is a \emph{slice of the SDP cone} \cite{nesterov2000squared}. More formally, for any \emph{fixed bases} $\p = (p_1, p_2, \cdots, p_L)$ and $\q = (q_1, q_2, \cdots, q_U)$ to $\vnd$ and $\vntd$ respectively, 
there exists a unique linear mapping $\Lambda: \R^U \to \R^{L \times L}$ satisfying 
\begin{equation} \label{eq_Lambda}
\Lambda(\q(t)) = \p(t) \p(t)^\top, ~~\forall t \in \R^n.
\end{equation}
Here we define $\p(t) = (p_1(t), p_2(t), \cdots, p_L(t))^{\top}$ and $\q(t) = (q_1(t), q_2(t), \cdots, q_U(t))^{\top}$.
An equivalent way to view the definition of $\Lambda$ in \eqref{eq_Lambda} is as follows: For polynomials $p_i,p_j \in \p$ there are unique coefficients $\lambda_{iju}$ such that $p_ip_j = \sum_{u \in U} \lambda_{iju} q_u$. These $\lambda_{iju}$ define the mapping $\Lambda$ unambiguously.

This in turn implies that a polynomial $s \in \vntd$ (we view $s$ as a vector in $\R^U$ that corresponds to its coefficients over the basis $\q$) is in the dual SOS cone $\Sigma_{n,2d}^*$ if and only if $\Lambda(s)$ is a \emph{positive semidefinite} (PSD) matrix
(proved by \cite{nesterov2000squared}, see Theorem \ref{prop:nesterov_dual_SOS} for details). 
As \cite{py19} recently observed, the choice of the bases $\p,\q$ crucially affects the complexity of the optimization problem, more on this below. 

Equation \eqref{eq_Lambda} implies the well-known fact that optimization over SOS polynomials \eqref{SOS_program_primal} can be reduced to semidefinite programming
\begin{equation}
\label{eq_SDP}\tag{SDP}
    \min_{X \succeq 0} \{ \pr{C}{X} \mid \tr(A_iX) = b_i, ~ \forall i \in [m]\},
\end{equation}
and can thus be solved using off-the-shelve SDP solvers. However, despite recent breakthroughs on the runtime of general SDP solvers via \emph{interior-point methods} (IPMs) \cite{jiang2020faster, hjst21}, the SDP reformulation \eqref{eq_SDP} of \eqref{SOS_program_primal} does not scale well for moderately large degrees, i.e., 
whenever $U \ll L^2$ in \eqref{eq_L_U}. This is because the SDP reformulation always incurs a factor of at least $L^2$, even when $U\ll L^2$, as this is the SDP variable size (the PSD matrix $X$ has size $L \times L$). Indeed, for current fast-matrix-multiplication time $\omega \approx 2.37$ \cite{l14, aw21}, 
the running time of state-of-the-art SDP solvers \cite{jiang2020faster, hjst21} for SOS optimization (Problem \ref{SOS_program_primal}) is\footnote{We use $\wt{O}(\cdot)$ to hide $U^{o(1)}$ and $\log (1/\delta)$ factors.}
\begin{equation}\label{eq_SDP_runtim}
 \wt{O}\left( L^{0.5} \cdot \min\{ UL^2 + U^{2.37}, ~ L^{4.24} \} \right ).
\end{equation}

An alternative approach is to solve Problem \ref{SOS_program_primal} directly by designing an \emph{ad-hoc} IPM for the dual SOS cone, avoiding the blowup in the SDP reformulation. This was exactly the motivation of \cite{py19}.
In more detail, all aformentioned SDP solvers are  based on IPMs \cite{nesterov1994interior}, which iteratively minimize the original objective function plus a barrier function via Newton steps. 
When applied to the SOS Problem \eqref{SOS_program_primal}, 
\emph{the choice of the specific bases $\p,\q$ crucially affects the structure of the (Hessian of the) barrier function $F(s)=F(\Lambda(s))$}, and hence the cost-per-iteration of the IPM. As such, choosing a ``good'' and efficient basis is key to a fast algorithm for \eqref{SOS_program_primal}. One of the main contribution of \cite{py19} is an efficient basis for the SOS cone, which efficiently scales to multivariate SOS, yielding an IPM whose total runtime is 
\begin{equation}\label{eq_PY_runtim}
\wt{O}\left( L^{0.5}U^\omega \right) \approx \wt{O}\left( L^{0.5}U^{2.37} \right).
\end{equation}

Our main result is a polynomially faster IPM for Problem \ref{SOS_program_primal}: \begin{theorem}[Main Result, Informal version of \Cref{thm:correct}]\label{thm_main_informal}
With current FMM exponent, there is an algorithm for solving Problem \eqref{SOS_program_primal}, whose total running time is $\wt{O}\left(  
LU^{1.87}
\right).$
\end{theorem}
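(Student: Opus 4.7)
\section*{Proof proposal for Theorem~\ref{thm_main_informal}}

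The plan is to design a short-step path-following IPM in the dual formulation \eqref{eq:sos_dual}, run it for the standard $\sqrt{\nu} = \Theta(\sqrt{L})$ outer iterations (the SOS barrier $F(s)=-\log\det(P^\top\diag(s)P)$ is $L$-self-concordant), and show that the \emph{amortized} per-iteration cost can be driven down to $U^{2+o(1)} + U^{\omega-1/2+o(1)}\,L^{1/2}$. With current $\omega\approx 2.37$ the latter term dominates, so multiplying by $\sqrt{L}$ iterations yields the claimed $\widetilde{O}(L\,U^{1.87})$ total. The initialization step of \Cref{lem:init} guarantees we can start on the central path with $\eta^0=1$, and the standard short-step invariant (controlled by a Newton potential $\|g_\eta\|_{H^{-1}}\le\epsilon_N$) is preserved along a geometric schedule $\eta \gets \eta(1+c/\sqrt{L})$ provided each inexact Newton step uses a spectral approximation $\widetilde{H} \approx_{O(1)} H$ of the Hessian; the approximation error can be absorbed into $\epsilon_N$ in the usual way.

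The heart of the proof is a data structure that maintains such a $\widetilde{H}$ under slowly changing $s$. In the interpolant basis the Hessian takes the Hadamard-square form
\begin{equation*}
H(s) \;=\; M(s)\circ M(s), \qquad M(s) \;:=\; P\bigl(P^\top\diag(s)P\bigr)^{-1}P^\top \;\in\;\R^{U\times U},
\end{equation*}
so a rank-$r$ change in $s$ induces (via Sherman--Morrison) a rank-$r$ change in $\Lambda(s)^{-1}$, hence a rank-$r$ change in $M$, but \emph{only} a rank-$O(rL)$ change in $H$, because $(M+\Delta)\circ(M+\Delta) - M\circ M = 2M\circ\Delta + \Delta\circ\Delta$ and an outer-product factor of $M$ has rank $\le L$. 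This is the fundamental obstacle: unlike LP/SDP inverse maintenance, low-rank slack updates do \emph{not} give low-rank Hessian updates. I would therefore maintain $\widetilde{H}^{-1}$ by applying Woodbury to these rank-$O(rL)$ Hadamard-product updates, computing the correction in time $\Tmat(U,U,\min\{rL,U\})$ using fast rectangular matrix multiplication.

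To amortize, I would pair this with the standard leverage-score sampling / lazy-update scheme: keep an approximate slack $\widetilde{s}$ that is refreshed only on coordinates whose multiplicative change has crossed a threshold. A Lipschitz argument on the central path (the $\|\cdot\|_{H^{-1}}$ movement of $s$ per step is $O(1/\sqrt{L})$) combined with the standard $\ell_2$-based potential of \cite{cls19,jiang2020faster} shows that the vector of update-ranks $(r_i)_{i=1}^t$ satisfies the hypothesis of \Cref{lem:general_amortize_tool}; plugging this into that lemma (cutting off at $U/L$ rather than at $L$ as in prior work, since Hadamard updates inflate ranks by a factor $L$) produces precisely the amortized bound $U^{2+o(1)} + U^{\omega-1/2+o(1)} L^{1/2}$.

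The hardest step is step two: verifying that approximate Newton steps against $\widetilde{H}$ still maintain the centrality invariant while the inflation-by-$L$ of update rank does not blow up the amortization. Two subtleties require care: (i) the Hadamard product does \emph{not} commute with the low-rank decomposition, so the Woodbury identity must be applied carefully to $\widetilde{H}^{-1}$ rather than to $\widetilde{H}$, and one must argue that the resulting spectral approximation quality is preserved multiplicatively; (ii) the rank vector $(r_i)$ must be shown to be non-increasing in the right sense (matching the hypothesis of \Cref{lem:general_amortize_tool}), which follows from a standard leverage-score potential but needs to be stated with respect to the Hessian norm $\|\cdot\|_{H^{-1}}$ rather than the slack norm. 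Once these are in place, combining the per-iteration amortized bound with the $\sqrt{L}$ iteration count and the duality-gap conversion from \Cref{lem:init} yields the theorem.
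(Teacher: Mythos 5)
Your proposal follows essentially the same route as the paper: a dual barrier method with $O(\sqrt{L})$ iterations started via \Cref{lem:init}, the interpolant-basis Hessian $H=M^{\circ 2}$ with $M=P(P^\top\diag(s)P)^{-1}P^\top$, the identity $M\circ(uv^\top)=\diag(u)\,M\,\diag(v)$ turning a rank-$r$ slack update into a rank-$O(rL)$ update of $M^{\circ 2}$ handled by Woodbury in $\Tmat(U,U,\min\{rL,U\})$ time, and amortization via \Cref{lem:general_amortize_tool} with the cutoff at $U/L$. Combined with the $\sqrt{L}$ iteration count this gives the claimed $\wt{O}(LU^{1.87})$ bound, exactly as in the paper.

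One point to correct: you describe the lazy update as coordinate-wise thresholding on an approximate slack vector $\wt{s}\in\R^U$ (\`a la the LP solvers). The movement bound actually available on the central path (\Cref{lem:slowly_moving}) is $\|S^{-1/2}S^{\new}S^{-1/2}-I\|_F=\|\delta_y\|_{H(y)}\le 2\epsilon_N$ for the $L\times L$ matrix $S=P^\top\diag(s)P$, i.e., a Frobenius bound on the relative spectral change of $S$; it does not control $\sum_u(\delta_{s_u}/s_u)^2$, since in the interpolant basis $M^{\circ 2}$ is far from diagonal and its diagonal entries are unrelated to $1/s_u$. The paper therefore runs the eigenvalue-thresholding scheme of \cite{jiang2020faster,hjst21} on $\wt{S}\in\R^{L\times L}$ (Algorithm~\ref{alg:low_rank_update}); a rank-$r$ spectral update to $\wt{S}$ yields a rank-$r$ update to $M=P\wt{S}^{-1}P^\top$, and the $\ell_2$ potential argument is applied to the eigenvalues of $(S^{\new})^{-1/2}\wt{S}(S^{\new})^{-1/2}-I$. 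With that substitution (and noting the per-step movement is $O(\epsilon_N)$, a constant, rather than $O(1/\sqrt{L})$), the rest of your argument matches the paper's proof.
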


Indeed, this runtime is polynomially faster than \eqref{eq_PY_runtim} and \eqref{eq_SDP_runtim}, as shown in \Cref{fig:runtimes}.
We now turn to elaborate on the technical approach for proving Theorem~\ref{thm_main_informal}.
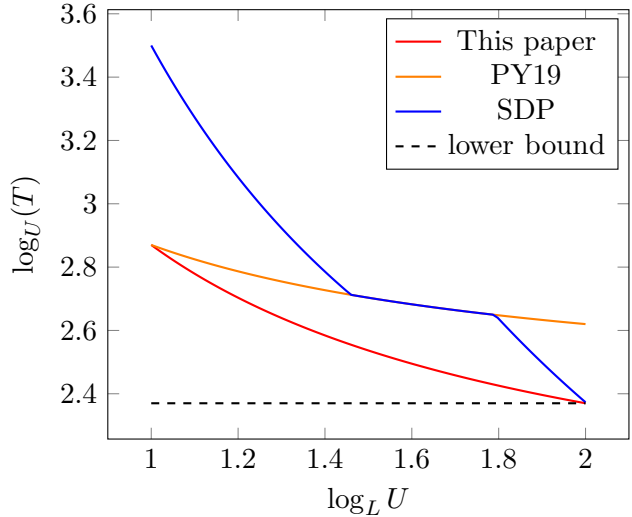
\begin{figure}[!ht]
\centering
\begin{tikzpicture}
    \begin{axis}[every axis plot/.append style={thick},xlabel=$\log_L U$, ylabel=$\log_U(T)$]

    \addplot [color=red, domain=1:2, samples=90] {1/(2*x) + max(2,1.87 + 1/(2*x)};
    \addlegendentry{This paper}

    \addplot [color=orange, domain=1:2, samples=90] {1/(2*x) + 2.37};
    \addlegendentry{PY19}

    \addplot [color=blue,domain=1:2, samples=90] {1/(2*x) + min(max(2.37,1+2/x),4.246/x)};
    \addlegendentry{SDP}

    \addplot [color=black, style=dashed, domain=1:2, samples=90] {2.37};
    \addlegendentry{lower bound}
    
    \end{axis}
\end{tikzpicture}
\caption{Overview of current running times of recent solvers for SOS. The lower bound bound stems from solving a linear system in $U$ variables, i.e., $T = \Omega(U^{\omega})$ where $\omega \approx 2.37$.}\label{fig:runtimes}.
\end{figure}

\paragraph*{Faster IPMs via Inverse-Maintenance} Interior-Point Methods (IPMs \cite{karmarkar1984new, renegarMathematicalViewInteriorPoint2001}) are a powerful class of second-order optimization 
algorithms for convex optimization, which essentially reduce a conic optimization problem \eqref{conic_program_primal} to solving \emph{a sequence of slowly-changing linear systems} (via Newton steps).
Since their discovery in the mid 80's,  IPMs have emerged as the ``gold-standard'' of convex optimization, as they are known to converge fast in \emph{both theory and practice} \cite{s87}.  The main computational cost of IPMs is computing, in each iteration,   
the inverse of the Hessian of the underlying barrier function $F(s)= F(\Lambda(s))$, which naively costs at least $U^\omega$ time per iteration for the SOS optimization problem \cite{py19}. A recent influential line of work \cite{cls19, jiang2020faster}, inspired by \cite{v89_lp}'s seminal work, has demonstrated that \emph{dynamically maintaining} the inverse of the Hessian matrix under \emph{low-rank} updates using clever \emph{data structures},   
can lead to much cheaper \emph{cost-per-iteration}. 
All of these results rely on a careful combination of dynamic 
data structures with the geometry (e.g., spectral approximation) of the underlying optimization method and barrier function. 
This paper extends this line of work to SOS optimization. 

\paragraph*{Our Techniques} 
We follow the framework of \cite{py19} which chooses the \emph{polynomial interpolant basis} representation and the corresponding linear operator $\Lambda: \R^U \to \R^{L \times L}$ is $\Lambda(s) = P^{\top} \diag(s) P$, where $P \in \R^{U\times L}$ is the matrix whose entries are the evaluation of the Lagrange interpolation polynomials, through some unisolvent\footnote{Any set of points in $\R^n$ for which the evaluation of a polynomial in $\vnd$ on these points uniquely defines the polynomial.} set of points in $\vnd$ (see \Cref{sec_background} for a formal definition). This basis induces the aforementioned convenient form of $\Lambda$, and generalizes to the multivariate case.  
The Hessian of the barrier function 
$F(s) = - \log \det(\Lambda(s))$ 
is given by 
\begin{equation*}
    H(s) = \big( P(P^{\top} \diag(s) P)^{-1} P^{\top} \big)^{\circ 2} \in \R^{U \times U},
\end{equation*}
where $A\circ B$ denotes the element-wise (Hadamard) product of two matrices. The main bottleneck of each iteration of IPMs is to compute the Hessian inverse $H(x)^{-1}$ of the Newton step, which na\"ively takes $O(U^{\omega})$ time. 

In IPM theory, it has long been known that it suffices to compute a spectral approximation of the Hessian. We follow the ``lazy update'' framework in recent developments of LP and SDP solvers \cite{cls19, jiang2020faster}, which batches together low-rank updates to $M:= P(P^{\top} \diag(s) P)^{-1} P^{\top}$, where $\rk(M) = L$. In each iteration, we can compute a spectral approximation $M^{\new} = M + U V^{\top}$, where $U,V$ are low rank matrices with size $U \times r$ where $r \ll U$ is chosen to optimize the runtime. Since $\wt{M} \approx M$ implies that $\wt{M}^{\circ 2} \approx M^{\circ 2}$, this also gives a spectral approximation of the Hessian.

The main challenge here, compared to previous LP and SDP solvers \cite{v89_lp, ls14, cls19, jiang2020faster, hjst21}, is that low-rank updates to $M$ do not readily translate to a low-rank update to $(M^{\circ 2})^{-1}$, since Hadamard-products can \emph{increase} the rank $\rk(A\circ B) \leq \rk(A)\cdot \rk(B)$, in contrast to standard matrix multiplication which does not increase the rank $\rk(A B) \leq \max\{\rk(A), \rk(B)\}$. This means that we cannot directly apply 
Woodbury's identity to efficiently update the inverse of the Hessian, which is the common approach in all aforementioned works. Instead, we employ the following property which relates rank-one Hadamard-product perturbations to standard matrix products 
\[
M \circ (u \cdot v^{\top}) = \diag(u) \cdot M \cdot \diag(v),
\]
which means that we can translate the rank-$r$ update of $M$ into a rank-$Lr$ update of $M^{\circ 2}$ for $r \le L$. With some further calculations, applying Woodbury's identity on the resulting matrix, implies that we can compute $((M^{\new})^{\circ 2})^{-1}$ in time 
\[
O\big(\Tmat(U,U,Lr)\big),
\] 
which is never worse than $\Tmat(U,U,U) = U^{\omega}$ as long as $r \le U/L$.
Modifying the amortization tools of \cite{jiang2020faster} and \cite{hjst21}, combined with basic spectral theory for Hadamard products,
we show that our amortized cost per iteration is bounded by
\[ O\big(U^2 + U^{\omega - 1/2} \cdot L^{1/2}\big),\]
which becomes $O\big(U^{2} + U^{1.87} L^{0.5}\big)$ if we plug in the current matrix multiplication exponent.

 \section{Preliminaries}
In this section we provide the definitions and the tools that we will use.
For any integer $n > 0$, we define $[n]=\{1,2,\cdots,n\}$. We use $\R_+$ and $\R_{\geq 0}$ to denote the set of positive and non-negative real numbers respectively. We use $0_n, 1_n \in \R^n$ to denote the all-zero and all-one vectors of size $n$.

Given a vector $v \in \R^n$, for any $m \leq n$, we use $v_{[:m]} \in \R^m$ to denote the first $m$ entries of $v$. For a vector $v \in \R^n$, we use $\diag(v) \in \R^{n \times n}$ to denote the diagonal matrix whose diagonal entries are $v$. For a square matrix $A \in \R^{n \times n}$, we use $\diag(A) \in \R^n$ to denote the vector of the diagonal entries of $A$. We use $\rk(A)$ to denote the rank of a matrix $A$. We use $\ker(A)$ and $\im(A)$ to denote the kernel space and the column space of $A$.

We say a matrix $A \in \R^{n \times n}$ is PSD (denoted as $A \succeq 0$) if $A$ is symmetric and $x^{\top} A x \geq 0$ for all $x \in \R^n$. We use $\mathbb{S}^{n \times n}$ to denote the set of PSD matrices of size $n \times n$. The spectral norm of a matrix $A \in \R^{n \times d}$ is defined as $\|A\|_2 = \max_{x\in \R^d, \|x\|_2 =1}\|Ax\|_2$.
The Frobenius norm of $A$ is defined as $\|A\|_{F} = \sqrt{\sum_{i \in [n]}\sum_{j \in [d]}A_{i, j}^2 }$. For any PSD matrix $M \in \mathbb{S}^{n \times n}$, we define the $M$-norm as $\|x\|_M = \sqrt{x^{\top} M x}$, $\forall x \in \R^n$.

We use $\Tmat(a,b,c)$ to denote the time to multiply two matrices of sizes $a \times b$ and $b \times c$. A basic fact of fast matrix multiplication is that $\Tmat(a,b,c) = \Tmat(b,c,a) = \Tmat(c,a,b)$ (see e.g. \cite{b13}), and we will use these three terms interchangeably.

\begin{fact}[Woodbury identity]\label{fac:woodbury}
Let $A \in \R^{n\times n}, C\in \R^{k\times k}, U\in \R^{n\times k}, V\in\R^{k\times n}$ where $A$ and $C$ are invertible, then
\begin{equation*}
(A + UCV)^{-1} = A^{-1} - A^{-1}U (C^{-1} + VA^{-1}U)^{-1}VA^{-1}.
\end{equation*}
\end{fact}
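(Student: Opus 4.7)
The plan is to verify the stated identity by the most direct route: multiply the claimed inverse against $A+UCV$ on the left and show that the product telescopes to the identity matrix. Since both $A$ and $C$ are assumed invertible, all inverses appearing in the expression are well-defined provided $C^{-1}+VA^{-1}U$ is invertible, which I would note is precisely the standing assumption needed for the right-hand side to make sense.

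To carry out the calculation, I would write $W := C^{-1} + VA^{-1}U$ and compute
\begin{align*}
(A+UCV)\bigl(A^{-1} - A^{-1}U W^{-1} V A^{-1}\bigr)
&= I + UCVA^{-1} - UW^{-1}VA^{-1} - UCVA^{-1}UW^{-1}VA^{-1}\\
&= I + UCVA^{-1} - U\bigl(I + CVA^{-1}U\bigr) W^{-1}VA^{-1}.
\end{align*}
The key algebraic step is to factor $I + CVA^{-1}U = C(C^{-1}+VA^{-1}U) = CW$, so that $U(I+CVA^{-1}U)W^{-1}VA^{-1} = UCWW^{-1}VA^{-1} = UCVA^{-1}$, which cancels the middle term and leaves exactly $I$. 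An identical computation verifies the product in the reverse order, confirming that the displayed formula indeed yields a two-sided inverse.

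There is no real obstacle here; the only thing to be careful about is the bookkeeping when collecting the four summands after expansion and recognizing the $C(C^{-1}+VA^{-1}U)$ factorization that makes the inverse $W^{-1}$ cancel. I would keep the proof to a few lines, since the statement is the classical Sherman--Morrison--Woodbury identity and is used in the paper purely as a black-box tool for maintaining $(M+UV^{\top})^{-1}$ under low-rank updates, rather than as a result requiring new ideas.
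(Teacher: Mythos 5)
Your verification is correct and is the standard direct computation for the Sherman--Morrison--Woodbury identity; the paper itself states Fact~\ref{fac:woodbury} as a classical black-box tool without proof, so there is nothing to diverge from. Your remark that invertibility of $C^{-1}+VA^{-1}U$ must be assumed (it is not implied by invertibility of $A$ and $C$ alone) is a fair and accurate refinement of the statement as written.
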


\begin{definition}[Hadamard product]
For any two matrices $A,B \in \R^{m \times n}$, the Hadamard product $A \circ B$ is defined as
\[
(A \circ B)_{i,j} = A_{i,j} \cdot B_{i,j}, ~~ \forall i \in [m], j \in [n].
\]
We also use $A^{\circ 2}$ to denote $A \circ A$.
\end{definition}

The Hadamard product has the following properties (the proofs are straightforward).
\begin{fact}[Properties of Hadamard product]\label{fac:Hadamard_product_property}
For matrices $A,B \in \R^{m \times n}$, and vectors $x \in \R^m$, $y \in \R^n$, we have the following properties.
\begin{enumerate}
\item \label{part:Hadamard_vector} $x^{\top} (A \circ B) y = \tr[\diag(x) A \diag(y) B^{\top}]$,
\item \label{part:Hadamard_rank_1} $A \circ (x \cdot y^{\top}) = \diag(x) \cdot A \cdot \diag(y)$.
\end{enumerate}
\end{fact}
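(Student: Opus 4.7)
The plan is to verify both identities by direct entry-wise bookkeeping, since each reduces to matching a common double-sum expression; there is no deep content here, only careful tracking of indices.

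I would start with part 2 because it is the cleanest and can be used as a building block for part 1. By definition of the Hadamard product, the $(i,j)$-entry of $A\circ (xy^\top)$ equals $A_{ij} \cdot (xy^\top)_{ij} = A_{ij} x_i y_j$. The $(i,j)$-entry of $\diag(x)\, A\, \diag(y)$ is, by the usual matrix product rule and the fact that multiplying by a diagonal matrix on the left (resp.\ right) rescales rows (resp.\ columns), equal to $x_i A_{ij} y_j$. The two expressions agree pointwise, so the claimed matrix identity holds.

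For part 1, the most direct route is to expand both sides as sums indexed by $(i,j) \in [m]\times [n]$. On the left,
\[
x^\top (A \circ B) y \;=\; \sum_{i\in [m]}\sum_{j\in [n]} x_i A_{ij} B_{ij} y_j,
\]
while on the right, using $\tr[M] = \sum_{i} M_{ii}$ for $M \in \R^{m\times m}$ together with $\diag(x) A \diag(y)$ having $(i,j)$-entry $x_i A_{ij} y_j$ (by part 2 applied with $B$ replaced by $xy^\top$, or by direct expansion), we get
\[
\tr\!\left[\diag(x)\, A\, \diag(y)\, B^\top\right] \;=\; \sum_{i\in [m]} \sum_{j\in [n]} x_i A_{ij} y_j B_{ij},
\]
which matches the left-hand sum term by term.

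A more conceptual alternative, if one prefers to avoid rewriting the trace by hand, is to derive part 1 from part 2 via the Frobenius inner product: use the standard identity $\langle M \circ N,\, P\rangle = \langle M,\, N \circ P\rangle$ to write $x^\top (A \circ B) y = \langle xy^\top,\, A \circ B\rangle = \langle (xy^\top)\circ A,\, B\rangle$; invoke part 2 to replace $(xy^\top)\circ A$ by $\diag(x) A \diag(y)$; then unfold the Frobenius inner product as a trace and use cyclicity to obtain the stated form. Either way, the only obstacle is notational: one must be careful not to confuse row indices with column indices when expanding the diagonal multiplications and the Hadamard product, and to remember that all sums run over the natural index sets $[m]$ and $[n]$.
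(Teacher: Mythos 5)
Your proof is correct: both identities follow from exactly the entry-wise expansions you give, and this is the routine verification the paper has in mind when it states that ``the proofs are straightforward'' and omits them. Nothing to add.
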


\begin{definition}[Spectral approximation]\label{def:spectral_approx}
For any two symmetric matrices $A, \wt{A} \in \R^{n \times n}$, any parameter $\epsilon \in (0,1)$, we say $\wt{A}$ and $A$ are $\epsilon$-spectral approximation of each other, denoted as $\wt{A} \approx_{\epsilon} A$, if we have
\begin{equation*}
    e^{-\epsilon} \cdot x^{\top} A x \leq x^{\top} \wt{A} x \leq e^{\epsilon} \cdot x^{\top} A x, ~~ \forall x \in \R^n.
\end{equation*}
\end{definition}

Spectral approximation has the following properties (for completeness we include a proof in the appendix).
\begin{apxfactrep}[Properties of spectral approximation]\label{fac:spectral_approx}
For any two PSD matrices $A, \wt{A} \in \R^{n \times n}$, any parameter $\epsilon \in (0,1)$, if $\wt{A} \approx_{\epsilon} A$, then we have
\begin{enumerate}
\item \label{part:spectral_two_arms} $B^{\top} A B \approx_{\epsilon} B^{\top} \wt{A} B$, for any matrix $B \in \R^{n \times n}$.
\item \label{part:spectral_inverse} If both $A$ and $\wt{A}$ are invertible, then $A^{-1} \approx_{\epsilon} \wt{A}^{-1}$.
\item \label{part:PSD_trace} $e^{-\epsilon} \tr[A] \leq \tr[\wt{A}] \leq e^{\epsilon} \tr[A]$.
\item \label{part:spectral_hadamard} $\wt{A}^{\circ 2} \approx_{2 \epsilon} A^{\circ 2}$.
\end{enumerate}
\end{apxfactrep}
\begin{proof}
The proofs of the first three claims are straightforward. We only prove the last claim.

For any vector $x \in \R^n$, we have
\begin{equation*}
\begin{aligned}
x^{\top} A^{\circ 2} x = &~ \tr[\diag(x) A \diag(x) A] \\
= &~ \tr[A^{1/2} \diag(x) A \diag(x) A^{1/2}] \\
\leq &~ e^{\epsilon} \cdot \tr[A^{1/2} \diag(x) \wt{A} \diag(x) A^{1/2}] \\
= &~ e^{\epsilon} \cdot \tr[\wt{A}^{1/2} \diag(x) A \diag(x) \wt{A}^{1/2}] \\
\leq &~ e^{2\epsilon} \cdot \tr[\wt{A}^{1/2} \diag(x) \wt{A} \diag(x) \wt{A}^{1/2}] = e^{2\epsilon} \cdot x^{\top} \wt{A}^{\circ 2} x
\end{aligned}
\end{equation*}
where the first step follows from Fact~\ref{fac:Hadamard_product_property}, the second and the fourth steps follow from the trace invariance under cyclic permutations and the fact that $A^{1/2}$ exists when $A$ is PSD, the third and the fifth steps follow from Part~3 of this fact.

Similarly we can prove $x^{\top} A^{\circ 2} x \geq e^{-2\epsilon} \cdot x^{\top} \wt{A}^{\circ 2} x$. Thus we have $A^{\circ 2} \approx_{2 \epsilon} \wt{A}^{\circ 2}$.
\end{proof} 

\section{Background of sum-of-squares optimization} 
\label{sec_background}
In this section we provide the background of sum-of-squares optimization. We refer the readers to \cite{parrilo2020sum,py19} for more details.

\begin{definition}[Polynomial space]
We use $\V_{n,d}$ to denote the set of $n$-variate polynomials over the reals of degree at most $d$, where the degree means the total degree, i.e., the degree of $x_1^{d_1} \cdots x_n^{d_n}$ is $\sum_{i=1}^n d_i$.
\end{definition}

\begin{definition}[Degree of polynomial space]
   We define $L := \dim (\V_{n,d}) = \binom{n+d}{n}$ and $U := \dim (\V_{n,2d}) = \binom{n+2d}{n}$.
\end{definition}

After fixing a basis $(p_1, p_2, \cdots, p_L)$ of $\V_{n,d}$, there exists a one-to-one correspondence between any polynomial $p = \sum_{i=1}^L x_i \cdot p_i \in \V_{n,d}$ and the vector $[x_1, x_2, \cdots, x_L] \in \R^L$. From now on when the basis is clear from context, we will use $\V_{n,d}$ and $\R^L$ interchangeably, and similarly $\V_{n,2d}$ and $\R^U$ interchangeably. 

\begin{definition}[SOS polynomials]
    A polynomial $p \in \V_{n,2d}$ is said to be a sum-of-squares $(SOS)$ polynomial if $p$ can be written as a sum of squares of polynomials, i.e. $p = \sum_{i = 1}^M q_i^2$ for some $M \in \mathbb{N}$ and polynomials $q_1, q_2, \cdots, q_M \in \V_{n,d}$.
    
    We use $\Sigma_{n,2d}$ to denote the set of $n$-variate SOS polynomials of degree at most $2d$.
\end{definition}
The set $\Sigma_{n,2d}$ is a closed convex and pointed cone in $\V_{n,2d}$ with non-empty interior (Theorem 17.1 of \cite{nesterov2000squared}). The SOS optimization problem requires the variable $x \in \R^U$ to be in the SOS cone, and it is a special case of conic programming. Given a constraint matrix $A \in \R^{m \times U}$ where $m \leq U$, and $b \in \R^m$ and $c \in \R^U$, the SOS optimization can be written in the following primal-dual formulation:
\begin{equation}
  \label{SOS_program_primal_dual}
  \tag{SOS}
  \begin{aligned}
  \text{Primal:~~~} \min \; &\pr{c}{x} \quad \\
  \mathrm{s.t.}~~ Ax& =b \\
  x &\in \Sigma_{n,2d}\, ,\\
  \end{aligned}
  \quad\quad\quad
  \begin{aligned} 
  \text{Dual:~~~}\max \; & \pr{y}{b} \\
  \mathrm{s.t.}~~ A^\top y + s &= c \\
  s &\in \Sigma_{n,2d}^*\, . \\
  \end{aligned}
\end{equation}
Here $\Sigma_{n,2d}^* := \{s \in \R^U \mid s^{\top} x \geq 0, ~\forall x \in \Sigma_{n,2d}\}$ denotes the dual cone of $\Sigma_{n,2d}$.

Nesterov in \cite{nesterov2000squared} noted that the dual SOS cone allows the following characterization. 
\begin{theorem}[Dual cone characterization, Theorem 17.1 of \cite{nesterov2000squared}]\label{prop:nesterov_dual_SOS}
For any ordered bases $\mathbf{p} = (p_1, \ldots, p_L)$ and $\mathbf{q} = (q_1, \ldots, q_U)$ of $\mathcal{V}_{n,d}$ and $\mathcal{V}_{n,2d}$, let $\Lambda: \R^U \to \R^{L \times L}$ be the unique linear mapping satisfying $\Lambda(\mathbf{q}) = \mathbf{p}\mathbf{p}^\top$.\footnote{This equation means $\forall t \in \R^n$, $\Lambda([q_1(t), \cdots, q_U(t)]^\top) = [p_1(t), \cdots, p_L(t)]^\top \cdot [p_1(t), \cdots, p_L(t)]$.} Then the dual cone $\Sigma_{n,2d}^*$ admits the characterization under the bases $\mathbf{p}$ and $\mathbf{q}$:
  \begin{equation}
    \Sigma_{n,2d}^* = \big\{s \in \R^U \mid \Lambda(s) \succeq 0\big\}.
  \end{equation}
\end{theorem}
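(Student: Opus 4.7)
My plan is to reduce the set equality to a single bookkeeping identity that decodes what the abstract linear map $\Lambda$ actually does in coordinates, and then derive both inclusions from elementary cone-duality manipulations.

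\textbf{Step 1: Decoding $\Lambda$.} First I would identify $\Lambda$ with its structure constants: expand $\Lambda(s) = \sum_{u=1}^U s_u \Lambda_u$ for fixed matrices $\Lambda_u \in \R^{L \times L}$. The defining property $\Lambda(\mathbf{q}(t)) = \mathbf{p}(t)\mathbf{p}(t)^\top$, viewed entrywise at every $t \in \R^n$, forces
\begin{equation*}
    \sum_{u=1}^U q_u(t)\,[\Lambda_u]_{ij} \;=\; p_i(t)\,p_j(t) \qquad \text{as polynomials in } t.
\end{equation*}
Since $p_i p_j \in \V_{n,2d}$ has a unique expansion in the basis $\mathbf{q}$, the entries $[\Lambda_u]_{ij}$ must be precisely the expansion coefficients of $p_i p_j$. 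Writing this compactly, $[\Lambda(s)]_{ij} = \pr{s}{p_i p_j}$, where the pairing reads off the coefficient vector of $p_i p_j$ in the $\mathbf{q}$-basis and contracts with $s \in \R^U$. In the same stroke this also shows that the map $\Lambda$ is well-defined and unique.

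\textbf{Step 2: Reduction to a quadratic form.} For any $a \in \R^L$, the polynomial $q = \sum_i a_i p_i \in \V_{n,d}$ satisfies, by bilinearity,
\begin{equation*}
    \pr{s}{q^2} \;=\; \sum_{i,j} a_i a_j \,\pr{s}{p_i p_j} \;=\; a^\top \Lambda(s)\, a.
\end{equation*}
Both inclusions now fall out. If $s \in \Sigma_{n,2d}^*$, then since $q^2 \in \Sigma_{n,2d}$ for every $q \in \V_{n,d}$, we get $a^\top \Lambda(s) a = \pr{s}{q^2} \geq 0$ for every $a \in \R^L$, hence $\Lambda(s) \succeq 0$. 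Conversely, if $\Lambda(s) \succeq 0$, then for any $x \in \Sigma_{n,2d}$ pick a decomposition $x = \sum_k q_k^2$ with $q_k = \sum_i a_{k,i} p_i$; summing the identity above over $k$ yields $\pr{s}{x} = \sum_k a_k^\top \Lambda(s) a_k \geq 0$, so $s \in \Sigma_{n,2d}^*$.

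\textbf{Main obstacle.} The proof is essentially linear-algebraic once the identity $[\Lambda(s)]_{ij} = \pr{s}{p_i p_j}$ is in hand, so the only delicate point is Step 1: one has to be careful that the defining equation $\Lambda(\mathbf{q}(t)) = \mathbf{p}(t)\mathbf{p}(t)^\top$, which is a priori a \emph{pointwise} condition on the evaluation functionals $\mathbf{q}(t)$, in fact pins down $\Lambda$ as a linear map on all of $\R^U$. This works because the span of $\{\mathbf{q}(t) : t \in \R^n\}$ is all of $\R^U$ (as any unisolvent set of size $U$ already spans), so the pointwise data determines $\Lambda$ uniquely. Once this is noted, the rest of the argument is purely cone-duality, using the textbook fact that $\Sigma_{n,2d}$ is the conic hull of squares of elements of $\V_{n,d}$.
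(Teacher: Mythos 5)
Your proof is correct. The paper does not prove this statement itself---it is quoted verbatim as Theorem~17.1 of Nesterov's work---and your argument is essentially the standard one from that source: identify the structure constants $[\Lambda_u]_{ij}$ with the coefficients of $p_ip_j$ in the $\mathbf{q}$-basis (which the paper also states, without proof, right after Eq.~\eqref{eq_Lambda}), derive the identity $\pr{s}{q^2} = a^\top \Lambda(s) a$, and read off both inclusions from the definition of $\Sigma_{n,2d}$ as sums of squares of elements of $\V_{n,d}$. Your flagged "delicate point" is handled adequately; note that the spanning of $\{\mathbf{q}(t) : t\in\R^n\}$ follows even more directly from $\mathbf{q}$ being a basis (a coefficient vector orthogonal to all evaluations gives the zero polynomial), without appealing to unisolvent sets.
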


As barrier functions for the cone of positive semidefinite matrices are well-known, this also gives rise to a barrier function for the dual SOS cone. With the standard log-det barrier for the semidefinite cone, the following function $F: \Sigma_{n,2d}^* \to \R$ is a barrier function for $\Sigma_{n,2d}^*$:
\[
F(s) = - \log \det(\Lambda(s)).
\]
Furthermore, the barrier parameter $\nu_F$ of $F(s)$ is bounded by the barrier parameter $L$ of the original log-det barrier function (\cite{nesterov2000squared}).

\paragraph*{Interpolant basis}
The barrier function depends on the choice of the basis for both $\mathcal V_{n,d}$ and $\mathcal V_{n,2d}$, as the linear map $\Lambda$ depends on these two bases. We follow the approach of \cite{py19} and focus on the so-called \emph{interpolant} bases, which generalises well to multivariate polynomials and is numerically stable. 

For the vector space $\mathcal{V}_{n,2d}$, consider a set of \emph{unisolvent} points $\mathcal{T} = \{t_1, t_2, \cdots, t_U\} \subseteq \R^n$, which is a set points such that every polynomial in $\mathcal{V}_{n,2d}$ is uniquely determined by its values on the points in $\mathcal T$.
For univariate polynomials any set of $U$ points suffices, but this does not hold anymore for the multivariate case. To also ensure numerical stability, the so called (approximate) Fekete points can be used as unisolvent points \cite{sommariva09, sommariva10}.

The interpolant basis is defined as follows. Let us fix a set of unisolvent points $\mathcal{T} = \{t_1, t_2, \cdots, t_U\} \subseteq \R^n$. Now every $t_u \in \mathcal T$ implies a Lagrange polynomial $q_u$ which is the unique polynomial that satisfies $q_u(t_u) = 1$ and $q_u(t_v) = 0$ for all $t_v \neq t_u \in \mathcal T$. The Lagrange polynomials form a basis $\mathbf{q} = (q_1, \cdots, q_U)$ of $\V_{n,2d}$. Choose any basis $\mathbf{p} = (p_1, \ldots, p_L)$ of $\V_{n,d}$. Define the matrix $P \in \R^{U \times L}$ as 
\[
P_{u, \ell} = p_\ell(t_u), ~~ \forall u \in [U], \ell \in [L].
\] 
By the definition of the Lagrange polynomials, $p_i p_j = \sum_{u=1}^U p_i(t_u) p_j(t_u) q_u$, so we have $\mathbf{p} \mathbf{p}^{\top} = P^{\top} \diag(\mathbf{q}) P$. Thus under the bases $\mathbf{p}$ and $\mathbf{q}$, the linear map $\Lambda: \R^U \to \mathbb{R}^{L \times L}$ takes on the following convenient form:
\begin{equation}\label{eq:def_Lambda}
    \Lambda(s) = P^{\top} \diag(s) P.
\end{equation}

 \section{Algorithm}
Since in this paper we focus on the theoretical running time of the algorithm, for simplicity we use the barrier method (see e.g.\  \cite[Chapter~2]{renegarMathematicalViewInteriorPoint2001}) instead of the more sophisticated Skajaa–Ye Algorithm used by \cite{py19}.

The dual formulation of \eqref{SOS_program_primal_dual} is equivalent to the following optimization problem
\begin{equation*}
    \min -b^{\top} y ~~~
    s.t.~~ y \in \ov{D}_F,
\end{equation*}
where with an abuse of the notation we define $F: \R^m \to \R_+$ to be the barrier function
\begin{equation}\label{eq:barrier}
    F(y) = -\log \det(\Lambda(c - A^{\top} y))
\end{equation}
for $c \in \R^U$, and $A \in \R^{m \times U}$, and $\Lambda(s) = P^{\top} \diag(s) P$ is the linear operator defined in Eq~\eqref{eq:def_Lambda}.
$D_F \subseteq \R^m$ is the domain of $F$, and $\ov{D}_F$ is the closure of $D_F$.

The barrier parameter of the barrier function $F$ is $\nu_F = L$. The gradient and the Hessian of the barrier function $F$ are (define $s := c - A^{\top} y$):
\begin{equation*}
\begin{aligned}
    g(y) = &~ A \cdot \diag\Big( P \big( P^{\top} \diag(s) P \big)^{-1} P^{\top} \Big), \\
    H(y) = &~ A \cdot \Big( P \big( P^{\top} \diag(s) P \big)^{-1} P^{\top} \Big)^{\circ 2} \cdot A^{\top}.
\end{aligned}
\end{equation*}

For any $\eta > 0$, define a function $F_{\eta}: \R^m \to \R$:
\begin{equation*}
    F_{\eta}(y) = -\eta \cdot b^{\top} y + F(y).
\end{equation*}
The gradient and the Hessian of $F_{\eta}(y)$ are:
\begin{equation*}
\begin{aligned}
    g_{\eta}(y) = &~ -\eta \cdot b + A \cdot \diag\Big( P \big( P^{\top} \diag(s) P \big)^{-1} P^{\top} \Big), \\
    H_{\eta}(y) = &~ A \cdot \Big( P \big( P^{\top} \diag(s) P \big)^{-1} P^{\top} \Big)^{\circ 2} \cdot A^{\top}.
\end{aligned}
\end{equation*}
Note that $H_{\eta}(y) = H(y)$ for any $\eta$.

In each iteration the barrier method increases $\eta$ by a factor of $1 + \Theta( \frac{1}{\sqrt{L}})$, and it performs a Newton step
\[
y \gets y - H_{\eta}(y)^{-1} \cdot g_{\eta}(y).
\]
By standard IPM theory it suffices to use a spectral approximation of the Hessian matrix in the Newton step. For more details see e.g. \cite{renegarMathematicalViewInteriorPoint2001}.

The main technical part of our algorithm is to efficiently maintain a matrix $N$ that is the spectral approximation of the inverse of the Hessian matrix. To do this, we maintain another matrix $\wt{S}$ that is a spectral approximation of $S := P^{\top} \diag(s) P$, and we use the subroutine \textsc{LowRankUpdate}(Algorithm~\ref{alg:low_rank_update}, Lemma~\ref{lem:low_rank_update}) to update $\wt{S}$. After $\wt{S}$ is updated, we use another subroutine \textsc{UpdateHessianInv} (Algorithm~\ref{alg:hessian_inverse_update}, Lemma~\ref{lem:hessian_inverse_update}) to update $N$. A complete description of our algorithm can be found in Algorithm~\ref{alg:barrier}.

\begin{algorithm}[th!]
    \caption{Main SOS algorithm.}
    \label{alg:barrier}
    \SetKwInOut{Input}{Input}
    \SetKwInOut{Parameters}{Parameters}
    \SetKwInOut{Output}{Output}
    \SetKw{And}{\textbf{and}}
    \Parameters{$\delta \in (0, 1)$, $\epsilon_N \in (0, 0.05)$, $\alpha = \frac{\epsilon_N}{20 \sqrt{L}}$, $t = 40 \epsilon_N^{-1} \sqrt{L} \log(L/\delta)$.}
    \Input{$A \in \R^{m \times U}$, $b \in \R^m$, $c \in \R^U$}
    \Output{A near feasible and optimal solution.}

    Construct $P \in \R^{U \times L}$ of the interpolant basis. Convert $A, b, c$ to the interpolant basis.
    
    Use Lemma~\ref{lem:init} to obtain a modified dual SOS optimization problem which has an initial solution $(y, s) \in \R^{m} \times \R^{U}$ that is optimal for $F_{\eta}$, where $\eta = 1$.
    
    $\wt{S} \leftarrow S \leftarrow P^{\top} \diag(s) P$ \tcp*{$\wt{S}, S \in \R^{L \times L}$}\label{line:initial_S}
    
    $T \leftarrow S^{-1}$ \tcp*{$T \in \R^{L \times L}$}
    
    $N \leftarrow \big( A (P T P^{\top})^{\circ 2} A^{\top} \big)^{-1}$ \tcp*{$N \in \R^{m \times m}$} \label{line:N_init}

    $g \leftarrow -\eta \cdot b + A \cdot \diag\big( P ( P^{\top} \diag(s) P )^{-1} P^{\top} \big)$ \tcp*{$g \in \R^m$} \label{line:g_init}
    
    \For{$i = 1, 2, \cdots, t$}{
        $\delta_y \leftarrow - N \cdot g$ \tcp*{$\delta_y \in \R^m$}\label{line:delta_y}
        
        $y^{\new} \leftarrow y + \delta_y$ \tcp*{$y^{\new} \in \R^m$}\label{line:y_new}
        
        $s^{\new} \leftarrow c - A^{\top} y^{\new}$ \tcp*{$s^{\new} \in \R^U$}
        
        $\eta^{\new} \leftarrow \eta \cdot (1 + \alpha)$\;\label{line:eta}
        
        $S^{\new} \leftarrow P^{\top} \diag(s^{\new}) P$ \tcp*{$S^{\new} \in \R^{L \times L}$} \label{line:compute_S_new}
        
        $\wt{S}^{\new}, V_1, V_2 \leftarrow \textsc{LowRankUpdate}(S^{\new}, \wt{S})$\;
        
        \hfill \tcp{Lemma~\ref{lem:low_rank_update}, $\wt{S}^{\new} \in \R^{L \times L}$, $V_1, V_2 \in \R^{L \times r_i}$ or $V_1 = V_2 = \mathtt{null}$}\label{line:low_rank_update}
        
        \If{$V_1 = V_2 = \mathtt{null}$}{\label{line:if_start}
            
            $T^{\new} \leftarrow (\wt{S}^{\new})^{-1}$ \tcp*{$T^{\new} \in \R^{L \times L}$}
            
            $N^{\new} \leftarrow \big( A \cdot (P T^{\new} P^{\top})^{\circ 2} \cdot A^{\top} \big)^{-1}$ \tcp*{$N^{\new} \in \R^{m \times m}$}
        }
        \Else{
        
        $T^{\new}, N^{\new} \leftarrow \textsc{UpdateHessianInv}(T,N,V_1,V_2)$\; \label{line:hessian_inverse_update} 
        
        \hfill \tcp{Lemma~\ref{lem:hessian_inverse_update}, $T^{\new} \in \R^{L \times L}, N^{\new} \in \R^{m \times m}$}
        }\label{line:if_end}
        
        $g^{\new} \leftarrow -\eta^{\new} \cdot b + A \cdot \diag\Big( P \big( P^{\top} \diag(s^{\new}) P \big)^{-1} P^{\top} \Big)$ \tcp*{$g^{\new} \in \R^m$}\label{line:g}
        
        $(\eta, y, s, \wt{S}, T, N, g) \leftarrow (\eta^{\new}, y^{\new}, s^{\new}, \wt{S}^{\new}, T^{\new}, N^{\new}, g^{\new})$\;
    }
  \Return{$(y,s)$}
\end{algorithm}
 \section{Updating Hessian inverse efficiently}\label{sec:hessian_inverse}
In this section we prove how to update the Hessian inverse efficiently. We present the algorithm \textsc{UpdateHessianInv} in Algorithm~\ref{alg:hessian_inverse_update}.
\begin{algorithm}[!ht]
    \caption{\textsc{UpdateHessianInv}}
    \label{alg:hessian_inverse_update}
    \SetKwInOut{Input}{Input}
    \SetKwInOut{Output}{Output}
    \SetKw{And}{\textbf{and}}
    \Input{$T \in \R^{L \times L}$, $N \in \R^{m \times m}$, $V_1,V_2 \in \R^{L \times r}$} 
    \Output{$T^{\new} \in \R^{L \times L}$, $N^{\new} \in \R^{m \times m}$}
    
    \tcp{Step 1}
    
    $\ov{V_1} \leftarrow - T V_1 \cdot (I + V_2^{\top} T V_1 )^{-1}$ \tcp*{$\ov{V_1} \in \R^{L \times r}$}
    
    $\ov{V_2} \leftarrow T V_2$ \tcp*{$\ov{V_2} \in \R^{L \times r}$}

    $T^{\new} \leftarrow T + \ov{V_1} \cdot \ov{V_2}^{\top}$ \tcp*{$T^{\new} \in \R^{L \times L}$}
    
    \tcp{Step 2}
    
    $Y' \leftarrow [2 P T, P \ov{V_1}]$ \tcp*{$Y' \in \R^{U \times (L+r)}$}
    
    $Z' \leftarrow [P, P \ov{V_2}]$ \tcp*{$Z' \in \R^{U \times (L+r)}$}
    
    $Y \leftarrow [\diag(u_1)Y', \cdots, \diag(u_r)Y']$, $u_i$ is the $i$-th column of $P \ov{V_1}$ \tcp*{$Y \in \R^{U \times (L+r) r}$}
    
    $Z \leftarrow [\diag(v_1)Z', \cdots, \diag(v_r)Z']$, $v_i$ is the $i$-th column of $P \ov{V_2}$ \tcp*{$Z \in \R^{U \times (L+r) r}$}
    
    \tcp{Step 3}
    
    $N^{\new} \leftarrow N - N \cdot (AY) \cdot \big(I + (AZ)^{\top} N (AY) \big)^{-1} \cdot (AZ)^{\top} \cdot N$ \tcp*{$N^{\new} \in \R^{m \times m}$}\label{line:N_new}
    
    \Return{$T^{\new}, N^{\new}$}
\end{algorithm}

\begin{lemma}[Hessian inverse update]\label{lem:hessian_inverse_update}
In the algorithm $\textsc{UpdateHessianInv}$ (Algorithm~\ref{alg:hessian_inverse_update}), the inputs are the maintained matrices $T, N$ and the updates $V_1, V_2 \in \R^{L \times r}$ where $r$ satisfies $Lr \leq U$. The inputs satisfy that for some $\wt{S} \in \mathbb{S}^{L \times L}$,
\begin{equation*}
\begin{aligned}
    & T = \wt{S}^{-1} \in \R^{L \times L}, \\
    & N = \big( A \cdot (P \wt{S}^{-1} P^{\top})^{\circ 2} \cdot A^{\top} \big)^{-1} \in \R^{m \times m},
\end{aligned}
\end{equation*}
Let $\wt{S}^{\new} = \wt{S} + V_1 V_2^{\top}$. The algorithm outputs two matrices $T^{\new}$, $N^{\new}$ such that
\begin{equation*}
\begin{aligned}
    & T^{\new} = (\wt{S}^{\new})^{-1} \in \R^{L \times L}, \\
    & N^{\new} = \big( A \cdot (P (\wt{S}^{\new})^{-1} P^{\top})^{\circ 2} \cdot A^{\top} \big)^{-1} \in \R^{m \times m}.
\end{aligned}
\end{equation*}
Furthermore, the algorithm takes $O(\Tmat(U, U, Lr))$ time.
\end{lemma}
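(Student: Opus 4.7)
The plan is to establish correctness via two applications of Woodbury's identity (Fact~\ref{fac:woodbury}), bridged by an algebraic step that converts the Hadamard-product perturbation of the Hessian into a genuine low-rank update.

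\textbf{Step 1: the inverse of $\wt{S}^{\new}$.} First I would verify $T^{\new} = (\wt{S}^{\new})^{-1}$. Applying Fact~\ref{fac:woodbury} to $\wt{S}^{\new} = \wt{S} + V_1 V_2^\top$ with $C = I_r$ yields
\[
(\wt{S} + V_1 V_2^\top)^{-1} \;=\; \wt{S}^{-1} - \wt{S}^{-1} V_1 (I_r + V_2^\top \wt{S}^{-1} V_1)^{-1} V_2^\top \wt{S}^{-1} \;=\; T + \ov{V_1}\,\ov{V_2}^\top,
\]
matching the algorithm's definition of $T^{\new}$. This step costs $O(L^2 r)$, dominated by the two products $TV_1$ and $TV_2$.

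\textbf{Step 2: the key identity.} The main obstacle is to show that the Hadamard-squared perturbation equals $YZ^\top$ for the $Y,Z$ built by the algorithm. Set $M := P T P^\top$ and $\Delta := P \ov{V_1}\,\ov{V_2}^\top P^\top = \sum_{i=1}^{r} u_i v_i^\top$, where $u_i$ and $v_i$ are the columns of $P\ov{V_1}$ and $P\ov{V_2}$. Then $P T^{\new} P^\top = M + \Delta$, and by bilinearity plus commutativity of the Hadamard product,
\[
(M + \Delta)^{\circ 2} - M^{\circ 2} \;=\; 2\,M \circ \Delta \;+\; \Delta \circ \Delta.
\]
Invoking Fact~\ref{fac:Hadamard_product_property}, part~\ref{part:Hadamard_rank_1}, on each rank-one summand of $\Delta$, both terms on the right collapse into
\[
\sum_{i=1}^{r} \diag(u_i)\,(2M + \Delta)\,\diag(v_i).
\]
Next I would recognise $2M + \Delta = Y'Z'^\top$ for $Y' = [2PT,\,P\ov{V_1}]$ and $Z' = [P,\,P\ov{V_2}]$; then stacking $\diag(u_i)Y'$ and $\diag(v_i)Z'$ horizontally over $i \in [r]$ produces matrices $Y, Z \in \R^{U \times (L+r)r}$ satisfying $YZ^\top = \sum_{i=1}^r \diag(u_i)\,Y'Z'^\top\,\diag(v_i)$, precisely the expression above. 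This is the conceptual heart of the lemma: it converts a Hadamard-rank-$r$ update into a standard rank-$O(Lr)$ update.

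\textbf{Step 3: the inverse of the new Hessian, and runtime.} Multiplying by $A$ on the left and $A^\top$ on the right yields $A (P T^{\new} P^\top)^{\circ 2} A^\top = N^{-1} + (AY)(AZ)^\top$, so a second application of Fact~\ref{fac:woodbury} (with $C = I$) delivers exactly the formula for $N^{\new}$ on line~\ref{line:N_new}. For the running time, I would account for forming $PT$ in $\Tmat(U,L,L)$ time; filling the column blocks of $Y$ and $Z$ in $O(ULr)$ arithmetic operations; computing $AY, AZ$ in $\Tmat(m,U,Lr)$; and the final Woodbury step, whose bottleneck is inverting an $(L+r)r \times (L+r)r$ matrix and multiplying it against $m \times (L+r)r$ blocks of $N$. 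Using the hypotheses $m \le U$ and $Lr \le U$, each of these is absorbed into $O(\Tmat(U,U,Lr))$, matching the claim.
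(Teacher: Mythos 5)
Your proposal is correct and follows essentially the same route as the paper: Woodbury for $T^{\new}$, the rank-one Hadamard identity to turn $(PT^{\new}P^\top)^{\circ 2}-(PTP^\top)^{\circ 2}$ into the explicit rank-$(L+r)r$ factorization $YZ^\top$, and a second Woodbury application for $N^{\new}$, with the same runtime accounting. The only (cosmetic) difference is that you derive $YZ^\top$ starting from the expansion $2M\circ\Delta+\Delta\circ\Delta$, whereas the paper verifies the identity in the reverse direction starting from the algorithm's $Y,Z$.
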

\begin{proof}
We first prove the correctness by analyzing each step of the algorithm.

\noindent {\bf Step 1. \ \  Compute $\ov{V_1}, \ov{V_2} \in \R^{L \times r}$ and $T^{\new} \in \R^{L \times L}$.}
\begin{equation*}
\begin{aligned}
    T^{\new} = &~ T + \ov{V_1} \cdot \ov{V_2}^{\top} \\     
    = &~ T - T V_1 \cdot (I + V_2^{\top} T V_1 )^{-1} \cdot V_2^{\top} T^{\top} \\
    = &~ (\wt{S} + V_1 V_2^{\top})^{-1} = (\wt{S}^{\new})^{-1},
\end{aligned}
\end{equation*}
where the first two steps follow from algorithm description, the third step follows from the Woodbury identity (Fact~\ref{fac:woodbury}) and $T = \wt{S}^{-1}$.

Thus $T^{\new}$ satisfies the requirement of the output.

\noindent {\bf Step 2. \ \  Compute $Y', Z' \in \R^{U \times (L+r)}$ and $Y, Z \in \R^{U \times (L+r) r}$}

We prove that $Y$ and $Z$ satisfy $(P T P^{\top})^{\circ 2} + Y \cdot Z^{\top} = (P T^{\new} P^{\top})^{\circ 2}$:
\begin{equation*}
\begin{aligned}
    (P T P^{\top})^{\circ 2} + Y \cdot Z^{\top}
    = &~ (P T P^{\top})^{\circ 2} + \sum_{i=1}^r \diag(u_i) \cdot \big( Y' \cdot (Z')^{\top} \big) \cdot \diag(v_i) \\
    = &~ (P T P^{\top})^{\circ 2} + \big( Y' \cdot (Z')^{\top} \big) \circ \big( \sum_{i=1}^r u_i \cdot v_i^{\top} \big) \\
    = &~ (P T P^{\top})^{\circ 2} + \big( 2 P T P^{\top} + (P \ov{V_1}) \cdot (P \ov{V_2})^{\top} \big) \circ \big( (P \ov{V_1})\cdot (P \ov{V_2})^\top \big) \\
    = &~ \big(P T P^{\top} + (P \ov{V_1}) \cdot (P \ov{V_2})^{\top} \big)^{\circ 2} \\
    = &~ (P T^{\new} P^{\top})^{\circ 2},
\end{aligned}
\end{equation*}
where the first step follows from the algorithm description of $Y$ and $Z$, the second step follows from Part~\ref{part:Hadamard_rank_1} of Fact~\ref{fac:Hadamard_product_property} that $\diag(x) \cdot A \cdot \diag(y) = A \circ (x \cdot y^{\top})$, the third step follows from $Y' \cdot (Z')^{\top} = 2 P T P^{\top} + (P \ov{V_1}) \cdot (P \ov{V_2})$ and $(P \ov{V_1}) \cdot (P \ov{V_2}) = \sum_{i=1}^r u_i \cdot v_i^{\top}$ (see algorithm description of $Y'$ and $Z'$), the last step follows from $T^{\new} = T + \ov{V_1} \cdot \ov{V_2}^{\top}$.

\noindent {\bf Step 3. \ \  Compute $N^{\new} \in \R^{m \times m}$.}
\begin{equation*}
\begin{aligned}
    N^{\new}
    = &~ N - N \cdot (AY) \cdot \big(I + (AZ)^{\top} N (AY) \big)^{-1} \cdot (AZ)^{\top} \cdot N \\
    = &~ \big( A \cdot (P T P^{\top})^{\circ 2} \cdot A^{\top} + (A Y) \cdot (A Z)^{\top} \big)^{-1} \\
    = &~ \big( A \cdot (P T^{\new} P^{\top})^{\circ 2} \cdot A^{\top} \big)^{-1},
\end{aligned}
\end{equation*}
where the first step follows from the algorithm description of $N^{\new}$, the second step follows from $N = \big( A \cdot (P T P^{\top})^{\circ 2} \cdot A^{\top} \big)^{-1}$ and the Woodbury identity (Fact~\ref{fac:woodbury}), and the last step follows from $(P T^{\new} P^{\top})^{\circ 2} = (P T P^{\top})^{\circ 2} + Y \cdot Z^{\top}$. 

Thus $N^{\new}$ satisfies the requirement of the output.

\noindent {\bf Time complexity.} It is easy to see that the most time-consuming step is to compute $N^{\new}$ on Line~\ref{line:N_new}, and in total this step takes $O(\Tmat(m,U,Lr) + \Tmat(m,m,Lr) + (Lr)^{\omega})$ time.

Since $Lr \leq U$ and $m \leq U$, overall this algorithm takes at most $O(\Tmat(U,U,Lr))$ time.
\end{proof}

\newpage
\section{Correctness}

\subsection{Standard results from IPM theory}\label{sec:standard}
We use the following two results of the barrier method that hold for any cone with a barrier function. The proofs are standard, (see e.g., \cite[Section~2.4]{renegarMathematicalViewInteriorPoint2001}), and for completeness we include a proof in the Appendix.
\begin{apxlemmarep}[Invariance of Newton step, \cite{renegarMathematicalViewInteriorPoint2001}]\label{lem:invariant_newton}
Consider the following optimization problem: $\min -b^{\top} y$ s.t. $y \in \ov{D}_F$, where $F: \R^m \to \R_+$ is a barrier function with barrier parameter $\nu_F$, $D_F \subseteq \R^m$ is the domain of $F$, and $\ov{D}_F$ is the closure of $D_F$. For any $\eta \geq 1$, define $F_{\eta}(y) = -\eta b^{\top} y + F(y)$. Let $g_{\eta}(y) \in \R^m$ and $H(y) \in \R^{m \times m}$ denote the gradient and the Hessian of $F_{\eta}$ at $y$.

Let $0 < \epsilon_N \leq 0.05$ be a parameter. If a feasible solution $y \in D_F$, a parameter $\eta > 0$, and a positive definite matrix $\wt{H} \in \mathbb{S}^{n \times n}$ satisfy the following: 
\begin{equation*}
\|g_{\eta}(y)\|_{H(y)^{-1}} \leq \epsilon_N,~~~
\wt{H} \approx_{0.02} H(y).
\end{equation*}
Then $\eta^{\new} = \eta \cdot (1 + \frac{\epsilon_N}{20 \sqrt{\nu_F}} )$, $y^{\new} = y + \delta_y$ where $\delta_y = - \wt{H}^{-1} g_{\eta^{\new}}(y)$ satisfy $y^{\new} \in D_F$ and
\begin{equation*}
\|\delta_y\|_{H(y)} \leq 2 \epsilon_N, ~~~
\|g_{\eta^{\new}} (y^{\new})\|_{H(y^{\new})^{-1}} \leq \epsilon_N.
\end{equation*}
\end{apxlemmarep}
\begin{proof}
For clarity in the proof we use $\eta_1 := \eta$, $\eta_2 := \eta^{\new}$, $y_1 := y$, and $\wt{y}_2 := y^{\new} = y_1 - \wt{H}^{-1} g_{\eta_2}(y_1)$. We also define $\alpha_H := e^{0.02} \leq 1.03$. Note that $\alpha_H^{-1} H(y_1) \preceq \wt{H} \preceq \alpha_H H(y_1)$. 

We first introduce some notation:
\begin{equation*}
\begin{aligned}
    n_{\eta_1}(y_1) := &~ - H(y_1)^{-1} g_{\eta_1}(y_1), \\
    n_{\eta_2}(y_1) := &~ - H(y_1)^{-1} g_{\eta_2}(y_1), \\
    \wt{n}_{\eta_2}(y_1) := &~ - \wt{H}(y_1)^{-1} g_{\eta_2}(y_1), \\
    n_{\eta_2}(\wt{y}_2) := &~ -H(\wt{y}_2)^{-1} g_{\eta_2}(\wt{y}_2)
\end{aligned}
\end{equation*}
Note that $\wt{y}_2 = y_1 + \wt{n}_{\eta_2}(y_1)$. Recall that we define $\|x\|_{H(y)}^2 = x^{\top} H(y) x, \forall x \in \R^m$ to be the local norm at $y$. The local norm induces a matrix norm $\|M\|_{H(y)} = \max_{x} \frac{\|M x\|_{H(y)}}{\|x\|_{H(y)}} = \|H(Y)^{1/2} M H(Y)^{-1/2}\|_2, \forall M \in \R^{m \times m}$.

{\bf Step 1 (Bound $\|n_{\eta_2}(y_1)\|_{H(y_1)}$).}
We have
\begin{equation*}
\begin{aligned}
    n_{\eta_2}(y_1) = &~ - H(y_1)^{-1} g_{\eta_2}(y_1) \\
    = &~ - H(y_1)^{-1} \cdot (-\eta_2 \cdot b + g(y_1)) \\
    = &~ \frac{\eta_2}{\eta_1} \cdot n_{\eta_1}(y_1) + (\frac{\eta_2}{\eta_1} - 1) \cdot H(y_1)^{-1} g(y_1).
\end{aligned}
\end{equation*}
Thus we have
\begin{equation}\label{eq:inv_newton_1}
\begin{aligned}
    \|n_{\eta_2}(y_1)\|_{H(y_1)} \leq &~ \frac{\eta_2}{\eta_1} \cdot \|n_{\eta_1}(y_1)\|_{H(y_1)} + |\frac{\eta_2}{\eta_1} - 1| \cdot \|H(y_1)^{-1} g(y_1)\|_{H(y_1)} \\
    \leq &~ \frac{\eta_2}{\eta_1} \cdot \|n_{\eta_1}(y_1)\|_{H(y_1)} + |\frac{\eta_2}{\eta_1} - 1| \cdot \sqrt{\nu_F} \\
    \leq &~ 1.1 \epsilon_N,
\end{aligned}
\end{equation}
where the second step follows from the definition of the barrier parameter (Section~2.3.1 of \cite{renegarMathematicalViewInteriorPoint2001}), and the third step follows from $\eta_2 = \eta_1 \cdot (1 + \frac{\epsilon_N}{20 \sqrt{\nu_F}})$ and $\|n_{\eta_1}(y_1)\|_{H(y_1)} \leq \epsilon_N$.

{\bf Step 2 (Bound $\|\wt{n}_{\eta_2}(y_1)\|_{H(y_1)}$).}
We have
\begin{equation}\label{eq:inv_newton_2}
\begin{aligned}
    \|\wt{n}_{\eta_2}(y_1)\|_{H(y_1)}^2 = &~ g_{\eta_2}(y_1)^{\top} \wt{H}^{-1} H(y_1) \wt{H}^{-1} g_{\eta_2}(y_1) \\
    \leq &~ \alpha_H \cdot g_{\eta_2}(y_1)^{\top} \wt{H}^{-1} g_{\eta_2}(y_1) \\
    \leq &~ \alpha_H^2 \cdot g_{\eta_2}(y_1)^{\top} H(y_1)^{-1} g_{\eta_2}(y_1) \\
    = &~ \alpha_H^2 \cdot \|n_{\eta_2}(y_1)\|_{H(y_1)}^2 \\
    \leq &~ 4 \epsilon_N^2,
\end{aligned}
\end{equation}
where the second step follows from $H(y_1) \preceq \alpha_H \wt{H}$, the third step follows from $\wt{H}^{-1} \preceq \alpha_H H(y_1)^{-1}$, and the fifth step follows from $\alpha_H \leq 1.03$ and $\|n_{\eta_2}(y_1)\|_{H(y_1)} \leq 1.1 \epsilon_N$ (Eq.~\eqref{eq:inv_newton_1}).

Note that $\wt{y}_2 \in D_F$ directly follows from this bound and the definition of self-concordant functions. This proves the first inequality that of $\|\delta_y\|_{H(y)} \leq 2 \epsilon_N$.

{\bf Step 3 (Bound $\|H(\wt{y}_2)^{-1} H(y_1)\|_{H(y_1)}$).}
We have
\begin{equation}\label{eq:inv_newton_3}
\begin{aligned}
    \|H(\wt{y}_2)^{-1} H(y_1)\|_{H(y_1)} \leq &~ \frac{1}{(1 - \|\wt{y}_2 - y_1\|_{H(y_1)})^2} \\
    = &~ \frac{1}{(1 - \|\wt{n}_{\eta_2}(y_1)\|_{H(y_1)})^2} \\
    \leq &~ 1/(0.9)^2
\end{aligned}
\end{equation}
where the first step follows from Theorem~2.2.1 of \cite{renegarMathematicalViewInteriorPoint2001}, the second step follows from $\wt{y}_2 = y_1 + \wt{n}_{\eta_2}(y_1)$, the third step follows from $\|\wt{n}_{\eta_2}(y_1)\|_{H(y_1)} \leq 2 \epsilon_N$ (Eq.~\eqref{eq:inv_newton_2}) and $\epsilon_N \leq 0.05$.

{\bf Step 4 (Bound $\|H(y_1)^{-1} g_{\eta_2}(\wt{y}_2)\|_{H(y_1)}$).}
We have
\begin{equation}\label{eq:inv_newton_4_1}
\begin{aligned}
    \|H(y_1)^{-1} g_{\eta_2}(\wt{y}_2)\|_{H(y_1)}
    \leq &~ \underbrace{\|H(y_1)^{-1} \cdot \big( g_{\eta_2}(\wt{y}_2) - g_{\eta_2}(y_1) - H(y_1) \wt{n}_{\eta_2}(y_1) \big)\|_{H(y_1)}}_{a_1} \\
    &~ + \underbrace{\|H(y_1)^{-1} g_{\eta_2}(y_1) + \wt{n}_{\eta_2}(y_1)\|_{H(y_1)}}_{a_2}.
\end{aligned}
\end{equation}
Next we bound these two terms separately. For the first term, we have
\begin{equation}\label{eq:inv_newton_4_2}
\begin{aligned}
    a_1 = &~ \|H(y_1)^{-1} \cdot \int_0^1 [H(y_1 + t \wt{n}_{\eta_2}(y_1)) - H(y_1)] \cdot \wt{n}_{\eta_2}(y_1) ~\mathrm{d}t \|_{H(y_1)} \\
    \leq &~ \|\wt{n}_{\eta_2}(y_1)\|_{H(y_1)} \cdot \int_0^1 \|I - H(y_1)^{-1} \cdot H(y_1 + t \wt{n}_{\eta_2}(y_1))\|_{H(y_1)} ~\mathrm{d}t \\
    \leq &~ \|\wt{n}_{\eta_2}(y_1)\|_{H(y_1)} \cdot \int_0^1 \Big( \frac{1}{(1 - t \|\wt{n}_{\eta_2}(y_1)\|_{H(y_1)})^2} - 1\Big) \mathrm{d}t \\
    = &~ \frac{\|\wt{n}_{\eta_2}(y_1)\|_{H(y_1)}^2}{1 - \|\wt{n}_{\eta_2}(y_1)\|_{H(y_1)}} \\
    \leq &~ 5 \epsilon_N^2,
\end{aligned}
\end{equation}
where the first step follows from $\wt{y}_2 - y_1 = \wt{n}_{\eta_2}(y_1)$ and Proposition~1.5.7 of \cite{renegarMathematicalViewInteriorPoint2001}, the third step follows from Theorem~2.2.1 of \cite{renegarMathematicalViewInteriorPoint2001}, and the fifth step follows from $\|\wt{n}_{\eta_2}(y_1)\|_{H(y_1)} \leq 2 \epsilon_N$ (Eq.~\eqref{eq:inv_newton_2}) and $\epsilon_N \leq 0.05$.

For the second term, we have
\begin{equation}\label{eq:inv_newton_4_3}
\begin{aligned}
    a_2^2 = &~ \Big| g_{\eta_2}(y_1)^{\top} \cdot \Big( H(y_1)^{-1} - \wt{H}^{-1} H(y_1) \wt{H}^{-1} \Big) \cdot g_{\eta_2}(y_1) \Big| \\
    \leq &~ \max\{\alpha_H^2 - 1, 1 - \alpha_H^{-2}\} \cdot g_{\eta_2}(y_1)^{\top} \cdot H(y_1)^{-1} \cdot g_{\eta_2}(y_1) \\
    \leq &~ 0.09 \epsilon_N^2
\end{aligned}
\end{equation}
where the first step follows from $\wt{n}_{\eta_2}(y_1) = - \wt{H}^{-1} \cdot g_{\eta_2}(y_1)$, the second step follows from $\alpha_H^{-1} H(y_1) \preceq \wt{H} \preceq \alpha_H H(y_1)$, the third step follows from $\|n_{\eta_2}(y_1)\|_{H(y_1)} \leq 1.1 \epsilon_N$ (Eq.~\eqref{eq:inv_newton_1}) and $\alpha_N \leq 1.03$.

Plug in Eq.~\eqref{eq:inv_newton_4_2} and \eqref{eq:inv_newton_4_3} into Eq.~\eqref{eq:inv_newton_4_1}, we have
\begin{equation}\label{eq:inv_newton_4}
    \|H(y_1)^{-1} g_{\eta_2}(\wt{y}_2)\|_{H(y_1)}
    \leq 5 \epsilon_N^2 + 0.3 \epsilon_N \leq 0.6 \epsilon_N,
\end{equation}
where the last step follows from $\epsilon_N \leq 0.05$.

{\bf Step 5 (Bound $\|n_{\eta_2}(\wt{y}_2)\|_{H(\wt{y}_2)}$).}
We are finally ready to bound $\|n_{\eta_2}(\wt{y}_2)\|_{H(\wt{y}_2)}$. We have
\begin{equation*}
\begin{aligned}
    \|n_{\eta_2}(\wt{y}_2)\|_{H(\wt{y}_2)}^2 = &~ g_{\eta_2}(\wt{y}_2)^{\top} H(\wt{y}_2)^{-1} g_{\eta_2}(\wt{y}_2) \\
    \leq &~ \|H(\wt{y}_2)^{-1} H(y_1)\|_{H(y_1)}^2 \cdot \|H(y_1)^{-1} g_{\eta_2}(\wt{y}_2)\|_{H(y_1)}^2 \\
    \leq &~ 1/(0.9)^4 \cdot (0.6 \epsilon_N)^2 \leq \epsilon_N^2,
\end{aligned}
\end{equation*}
where the third step follows from Eq.~\eqref{eq:inv_newton_3} and Eq.~\eqref{eq:inv_newton_4}. This proves the second inequality that $\|g_{\eta^{\new}} (y^{\new})\|_{H(y^{\new})^{-1}} \leq \epsilon_N$.
\end{proof}

\begin{apxlemmarep}[Approximate optimality, \cite{renegarMathematicalViewInteriorPoint2001}]\label{lem:approximate_optimality}
Consider the following optimization problem: $\min -b^{\top} y$ s.t. $y \in \ov{D}_F$, where $F: \R^m \to \R_+$ is a barrier function with barrier parameter $\nu_F$, $D_F \subseteq \R^m$ is the domain of $F$, and $\ov{D}_F$ is the closure of $D_F$. Let $\mathrm{OPT}$ be the optimal objective value of this optimization problem. For any $\eta \geq 1$, define $F_{\eta}(y) = -\eta b^{\top} y + F(y)$. Let $g_{\eta}(y) \in \R^m$ and $H(y) \in \R^{m \times m}$ denote the gradient and the Hessian of $F_{\eta}$ at $y$.

Let $0 < \epsilon_N \leq 0.05$. If a feasible solution $y \in D_F$ satisfies $\|g_\eta(y)\|_{H(y)^{-1}} \leq \epsilon_N$,
then we have $-b^\top y \leq \mathrm{OPT} + \frac{\nu_F}{\eta} \cdot (1 + 2 \epsilon_N)$.
\end{apxlemmarep}
\begin{proof}
We use $g(y) \in \R^m$ to denote the gradient of $F$. Note that the Hessian of $F$ and $F_{\eta}$ are the same. Let $z(\eta) \in D_F$ be the minimizer of $F_{\eta}(y) = -\eta \langle b, y \rangle + F(y)$. Let $y^* \in \ov{D}_F$ denote the optimal solution of $\min - \langle b, y \rangle$ s.t. $y \in \ov{D}_F$.

{\bf Step 1.} We have
\begin{equation}\label{eq:approx_opt_1}
\begin{aligned}
    -\langle b, z(\eta) \rangle - \mathrm{OPT} = &~ -\langle b, z(\eta) \rangle + \langle b, y^* \rangle \\
    = &~ \frac{1}{\eta} \langle \eta b, y^* - z(\eta) \rangle \\
    = &~ \frac{1}{\eta} \langle g(z(\eta)), y^* - z(\eta) \rangle < \frac{1}{\eta} \nu_F,
\end{aligned}
\end{equation}
where the second step follows from $g_{\eta} (z(\eta)) = -\eta b + g(z(\eta)) = 0$ since $z(\eta)$ is the minimizer of $F_{\eta}$, the third step follows from Theorem 2.3.3 of \cite{renegarMathematicalViewInteriorPoint2001} that $\langle g(x), y - x \rangle < \nu_F$ for any $x, y \in D_F$.

{\bf Step 2.} By the definition of self-concordant functions (Section 2.2.1 of \cite{renegarMathematicalViewInteriorPoint2001}), for any $z \in D_F$, we have that $z + t\cdot H(z)^{-1} b \in D_F$ if $t \in [0, \frac{1}{\|H(z)^{-1} b\|_{H(z)}})$. So we have
\begin{equation*}
    -\langle b, z + t\cdot H(z)^{-1} b \rangle \geq \mathrm{OPT},
\end{equation*}
which implies that
\begin{equation*}
    \langle b, t\cdot H(z)^{-1} b \rangle \leq -\langle b, z \rangle - \mathrm{OPT},
\end{equation*}
and as $t$ approaches $\frac{1}{\|H(z)^{-1} b\|_{H(z)}} = \frac{1}{\|b\|_{H(z)^{-1}}}$, we have
\begin{equation}\label{eq:approx_opt_2}
    \|b\|_{H(z)^{-1}} \leq -\langle b, z \rangle - \mathrm{OPT}.
\end{equation}

{\bf Step 3.} Hence for all $y, z \in D_F$, we have
\begin{equation}\label{eq:approx_opt_3}
\begin{aligned}
    \frac{-\langle b, y \rangle - \mathrm{OPT}}{-\langle b, z \rangle - \mathrm{OPT}} = &~ 1 + \frac{-\langle b, y - z \rangle}{-\langle b, z \rangle - \mathrm{OPT}} \\
    \leq &~ 1 + \frac{\|b\|_{H(z)^{-1}} \cdot \|y - z\|_{H(z)}}{-\langle b, z \rangle - \mathrm{OPT}} \\
    \leq &~ 1 + \|y - z\|_{H(z)},
\end{aligned}
\end{equation}
where the last step follows from Eq.~\eqref{eq:approx_opt_2}.

{\bf Step 4.}
Plugging in $z = z(\eta)$ to Eq.~\eqref{eq:approx_opt_3}, we have
\begin{equation}\label{eq:approx_opt_4}
\begin{aligned}
    -\langle b, y \rangle \leq &~ \mathrm{OPT} + (-\langle b, z(\eta) \rangle - \mathrm{OPT}) \cdot (1 + \|y - z(\eta))\|_{H(z(\eta))}) \\
    \leq &~ \mathrm{OPT} + \frac{\nu_F}{\eta} \cdot (1 + \|y - z(\eta))\|_{H(z(\eta))}),
\end{aligned}
\end{equation}
where the second step follows from Eq.~\eqref{eq:approx_opt_1}.

{\bf Step 5.}
Finally, for simplicity denote the Newton step of $F_{\eta}$ at $y$ as $n(y) := H(y)^{-1} g_{\eta}(y) \in \R^m$. We have $\|n(y)\|_{H(y)} \leq \epsilon_N$ where $\epsilon_N \leq 0.05$, and using Theorem~2.2.5 of \cite{renegarMathematicalViewInteriorPoint2001}, we have
\begin{equation*}
    \|y - z(\eta)\|_{H(y)} \leq \|n(y)\|_{H(y)} + \frac{3 \|n(y)\|_{H(y)}^2}{(1 - \|n(y)\|_{H(y)})^3} \leq \epsilon_N \cdot (1 + \frac{3 \epsilon_N}{(1 - \epsilon_N)^3}) \leq 1.5 \epsilon_N.
\end{equation*}
Then by the definition of self-concordance (Section~2.2.1 of \cite{renegarMathematicalViewInteriorPoint2001}), we have
\begin{equation*}
    \|y - z(\eta)\|_{H(z(\eta))} \leq \frac{\|y - z(\eta)\|_{H(y)}}{1 - \|y - z(\eta)\|_{H(y)}} \leq \frac{1.5 \epsilon_N}{1 - 1.5 \epsilon_N} \leq 2 \epsilon_N.
\end{equation*}

Thus, from Eq.~\eqref{eq:approx_opt_4} we have
\begin{equation*}
    -\langle b, y \rangle \leq \mathrm{OPT} + \frac{\nu_F}{\eta} \cdot (1 + 2 \epsilon_N). \qedhere
\end{equation*}
\end{proof}

\subsection{Low rank update}
We use the following low rank update procedure of \cite{jiang2020faster} and \cite{hjst21}, which we modify by using a cutoff when $r\geq U/L$.
The proof of the following lemma can be found in \cite[Theorem~10.8]{hjst21}.
\begin{algorithm}[htb!]
    \caption{\textsc{LowRankUpdate} of \cite{jiang2020faster}}
    \label{alg:low_rank_update}
    \SetKwInOut{Input}{Input}
    \SetKwInOut{Parameters}{Parameters}
    \SetKwInOut{Output}{Output}
    \SetKw{And}{\textbf{and}}
    \Parameters{A real number $\epsilon_S < 0.01$.}
    \Input{New exact matrix $S^{\new} \in \R^{L \times L}$, old approximate matrix $\wt{S} \in \R^{L \times L}$, }
    \Output{New approximate matrix $\wt{S}^{\new} \in \R^{L \times L}$, update matrices $V_1, V_2 \in \R^{L \times r}$.} 

    $Z_{\midd} \gets (S^{\new})^{-1/2} \wt{S} (S^{\new})^{-1/2}  - I$
    
    Compute spectral decomposition $Z_{\midd} = X \diag(\lambda) X^\top$
    
    Let $\pi: [L] \to [L]$ be a sorting permutation such that $|\lambda_{\pi(i)}| \ge |\lambda_{\pi(i+1)}|$.
    
    \If{$|\lambda_{\pi(1)}| \le \epsilon_S$}{
        $\wt{S}_{\new} \gets \wt{S}$\;
        
        \Return{$(\wt{S}^{\new}, 0, 0)$}
    }
    \Else{
        $r \gets 1$\;
        
        \While{$2r \leq U/L$ \emph{and} $(|\lambda_{\pi(2r)}| > \epsilon_S$ \emph{or} $|\lambda_{\pi(2r)}| > (1 - 1 / \log L) |\lambda_{\pi(r)}|)$}{
            $r \gets r + 1$\;
            }
        $r \leftarrow 2 r$\;
        
        \If{$r \ge U/L$}{
            $\wt{S}^{\new} \gets S^{\new}$ \tcp*{Here we deviate from \cite{jiang2020faster}}
            
            \Return{$(\wt{S}^{\new}, \mathtt{null}, \mathtt{null})$}
        }
        \Else{
            $\lambda^{\new}_{\pi(i)} \gets 
            \begin{cases} 
            0 & \text{if } i = 1,2, \ldots, r \\ 
            \lambda_{\pi(i)} & \text{else }
            \end{cases}$
            
            $\Omega \leftarrow \supp(\lambda^{\new} - \lambda)$ \tcp*{$|\Omega| = r$}
            
            $V_1 \leftarrow ((S^{\new})^{1/2} \cdot X \cdot \diag (\lambda^{\new} - \lambda) )_{:,\Omega}$ \tcp*{$V_1 \in \R^{L \times r}$}
            
            $V_2 \leftarrow ((S^{\new})^{1/2} \cdot X )_{:,\Omega}$ \tcp*{$V_2 \in \R^{L \times r}$}
            
            $\wt{S}^{\new} \gets \wt{S} + (S^{\new})^{1/2} X \diag(\lambda^{\new} - \lambda) X^\top (S^{\new})^{1/2}$ \tcp*{$\wt{S}^{\new} = \wt{S} + V_1 V_2^{\top} \in \R^{L \times L}$}
            
            \Return{$(\wt{S}^{\new}, V_1, V_2)$}\;
        }
    }
  \end{algorithm}
\begin{lemma}[Low rank update]\label{lem:low_rank_update}
The algorithm $\textsc{LowRankUpdate}$ (Algorithm~\ref{alg:low_rank_update}) has the following properties:
\begin{enumerate}[(i)]
    \item The output matrix $\wt{S}^{\new} = \wt{S} + V_1 V_2^{\top}$ is a spectral approximation of the input matrix: \[
    \wt{S}^{\new} \approx_{\epsilon_S} S^{\new}.
    \]
    \item \label{it:general-non-increasing} Consider $t$ iterations of \textsc{LowRankUpdate}. Initially $\wt{S}^{(0)} = S^{(0)}$, and we use $(S^{(i)}, \wt{S}^{(i-1)})$ and $(\wt{S}^{(i)}, V_1^{(i)}, V_2^{(i)})$ to denote the input and the output of the $i$-th iteration. We define the rank $r_i$ to be the rank of $V_1^{(i)}$ if $V_1^{(i)} \neq \mathtt{null}$, and otherwise we define $r_i = U/L$.
    
    If the input exact matrices $S^{(0)}, S^{(1)}, \cdots, S^{(t)} \in \R^{L \times L}$ satisfy
    \begin{equation}\label{eq:slow_changing_requirement}
        \|(S^{(i-1)})^{-1/2} S^{(i)} (S^{(i-1)})^{-1/2} - I\|_F \leq 0.02, ~\forall i \in [t].
    \end{equation}
    Then for any non-increasing sequence $g \in \R^L_+$, the ranks $r_i$ satisfy
    \begin{equation*}
        \sum_{i=1}^t r_i \cdot g_{r_i} \leq O(t \cdot \|g\|_2 \cdot \log L).
    \end{equation*}
\end{enumerate}
Furthermore, the algorithm \textsc{LowRankUpdate} takes $O(L^{\omega})$ time.
\end{lemma}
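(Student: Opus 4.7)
The proof plan is to follow closely the argument of \cite[Theorem~10.8]{hjst21}, treating each of the three claims separately, and to verify that the modification specific to this paper---namely aborting low-rank maintenance and setting $\wt{S}^{\new} = S^{\new}$ whenever the would-be update rank $r$ reaches $U/L$---does not damage the spectral-approximation guarantee and is benign for the amortization.

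For the spectral approximation claim (i), I would split into the three algorithmic branches. If $|\lambda_{\pi(1)}| \le \epsilon_S$, then every eigenvalue of $Z_{\midd} = (S^{\new})^{-1/2} \wt{S} (S^{\new})^{-1/2} - I$ has magnitude at most $\epsilon_S$, so using $|x|\le \epsilon_S \Rightarrow e^{-\epsilon_S} \le 1+x \le e^{\epsilon_S}$ (valid since $\epsilon_S<0.01$) yields $e^{-\epsilon_S} S^{\new} \preceq \wt{S} \preceq e^{\epsilon_S} S^{\new}$, i.e.\ $\wt{S}^{\new} = \wt{S} \approx_{\epsilon_S} S^{\new}$. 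In the new branch where $r \ge U/L$ the algorithm returns $\wt{S}^{\new} = S^{\new}$, which is trivially a $0$-spectral approximation. In the remaining branch, the update is constructed precisely so that the top-$r$ eigenvalues of $Z_{\midd}$ are zeroed out while the others are unchanged; by the stopping condition of the while loop, all surviving eigenvalues have magnitude at most $\epsilon_S$, giving $\wt{S}^{\new} \approx_{\epsilon_S} S^{\new}$.

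For the amortization claim (ii) I would port the charging argument of \cite{hjst21}. The slow-change hypothesis \eqref{eq:slow_changing_requirement} controls, via Weyl-type inequalities in the local metric of $\wt S^{(i-1)}$, how many eigenvalues of $Z_{\midd}^{(i)}$ can cross the threshold $\epsilon_S$ in one step; bucketing the ranks $r_i$ into $O(\log L)$ dyadic levels $[2^k, 2^{k+1})$ and bounding the number of iterations active at level $k$ by the $\ell_2$-mass of the Frobenius perturbations yields $\sum_i r_i g_{r_i} \le O(t \|g\|_2 \log L)$. The delicate point is the cutoff: on iterations where the original procedure would produce rank $r^*_i > U/L$ we instead set $\wt{S}^{\new}=S^{\new}$ and account $r_i = U/L$. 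This is what I expect to be the main obstacle to verify cleanly. The intuition is that a full reset restores the invariant $\wt S = S$ exactly, which can only help subsequent charging, while the capped contribution $(U/L) g_{U/L}$ is dominated by the level-$k$ bounds already produced by the dyadic argument since $g$ is non-increasing. I would make this precise by showing that replacing $r^*_i$ by $\min\{r^*_i, U/L\}$ in the original inequality preserves the sum up to a constant factor, invoking monotonicity of $g$ together with the fact that $(U/L)\cdot g_{U/L} \le \sum_{j \le U/L} g_j \cdot \mathbf{1}_{[j \text{ reached}]}$ which is already charged inside the dyadic levels below $U/L$.

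For the runtime claim, each step of the algorithm is a constant number of matrix operations on $L\times L$ matrices: the inverse square root $(S^{\new})^{-1/2}$, the two conjugations producing $Z_{\midd}$, the symmetric eigendecomposition of $Z_{\midd}$, the low-rank reconstruction of $\wt{S}^{\new}$, and the final sorting and column selection. Each of these is $L^{\omega}$ (up to $L^{o(1)}$ factors), and the remaining bookkeeping is lower order, so the overall cost is $O(L^\omega)$ as claimed.
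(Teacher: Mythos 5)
Your plan coincides with the paper's: the paper does not prove this lemma in-line but defers to \cite[Theorem~10.8]{hjst21}, and your argument --- the three-branch case analysis for (i), the ported potential/bucketing argument for (ii) together with the observation that the $U/L$ cutoff merely performs a full reset (which restores $\wt{S}^{\new}=S^{\new}$ exactly, so it can only decrease the potential, while the charged cost $(U/L)\,g_{U/L}$ is still paid for by the eigenvalues that forced the while loop to run all the way to $2r>U/L$), and the $O(L^{\omega})$ accounting --- is exactly that proof adapted to the cutoff. One cosmetic slip: from $|x|\le\epsilon_S$ you only get $1+x\ge 1-\epsilon_S$, which is \emph{smaller} than $e^{-\epsilon_S}$, so under the paper's Definition~\ref{def:spectral_approx} the surviving eigenvalues give $\wt{S}^{\new}\approx_{2\epsilon_S}S^{\new}$ rather than $\approx_{\epsilon_S}$ (since $1-\epsilon_S\ge e^{-2\epsilon_S}$ for $\epsilon_S\le 1/2$); this constant is harmless downstream, where only $2\epsilon_S\le 0.02$ is used.
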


\subsection{Slowly moving guarantee}
In SOS optimization, the matrix $S = P^{\top} \diag(s) P$ corresponds to the slack matrix of the SDP. The following lemma proves similar to SDP, in SOS the matrix $S$ is changing slowly. The proof is deferred to the appendix. Using this lemma we will prove that the requirement Eq.~\eqref{eq:slow_changing_requirement} of Lemma~\ref{lem:low_rank_update} is satisfied, which means we can approximate the change to the slack by a low-rank matrix.
\begin{lemma}[Slowly moving guarantee]\label{lem:slowly_moving}
Let $c \in \R^U$ and $A \in \R^{m \times U}$ be the input to the optimization problem. Let $P \in \R^{U \times L}$ be the matrix of the interpolant basis. 

For any $y \in \R^m$ and $y^{\new} = y + \delta_y \in \R^m$, let $s = c - A^{\top} y \in \R^U$ and $S = P^{\top} \diag(s) P \in \R^{L \times L}$. Similarly define $s^{\new}$ and $S^{\new}$ from $y^{\new}$. Let $H(y) = A \cdot \big( P \big( P^{\top} \diag(s) P \big)^{-1} P^{\top} \big)^{\circ 2} \cdot A^{\top} \in \R^{m \times m}$. If $s, s^{\new} \in \Sigma_{n,2d}^*$, then $S$ and $S^{\new}$ are both PSD, and we have
\begin{equation*}
    \|S^{-1/2} S^{\new} S^{-1/2} - I\|_F = \|\delta_y\|_{H(y)}.
\end{equation*}
\end{lemma}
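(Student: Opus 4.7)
The identity is a direct algebraic computation, so the plan is to unfold both sides and use the Hadamard-product trace identity (Fact~\ref{fac:Hadamard_product_property}, Part~\ref{part:Hadamard_vector}) as the one non-trivial step.

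First, I would dispense with the PSD claim: since $s, s^{\new} \in \Sigma_{n,2d}^*$, Theorem~\ref{prop:nesterov_dual_SOS} gives $S = \Lambda(s) \succeq 0$ and $S^{\new} = \Lambda(s^{\new}) \succeq 0$, so $S^{-1/2}$ is well-defined (assuming the interior condition, which is maintained along the central path). Then I would rewrite the difference: since $s^{\new} - s = -A^\top \delta_y$ and $\Lambda$ is linear,
\begin{equation*}
S^{\new} - S = P^\top \diag(s^{\new} - s) P = -P^\top \diag(A^\top \delta_y) P.
\end{equation*}
Setting $u := -A^\top \delta_y \in \R^U$ and $M := P S^{-1} P^\top \in \R^{U \times U}$, I would expand the Frobenius norm squared by writing it as a trace:
\begin{align*}
\|S^{-1/2} S^{\new} S^{-1/2} - I\|_F^2
&= \|S^{-1/2}(S^{\new}-S)S^{-1/2}\|_F^2 \\
&= \tr\bigl[S^{-1/2} P^\top \diag(u) P S^{-1} P^\top \diag(u) P S^{-1/2}\bigr] \\
&= \tr\bigl[\diag(u)\, M\, \diag(u)\, M\bigr],
\end{align*}
where the last equality uses cyclic invariance of trace to group $P S^{-1/2}\cdot S^{-1/2}P^\top = P S^{-1}P^\top = M$ (appearing twice).

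Next I would compute the right-hand side. By definition $\|\delta_y\|_{H(y)}^2 = \delta_y^\top A\, M^{\circ 2}\, A^\top \delta_y = u^\top M^{\circ 2} u$ (the sign on $u$ squares out). Now I apply the key identity from Fact~\ref{fac:Hadamard_product_property}, Part~\ref{part:Hadamard_vector}, namely $x^\top (A\circ B)y = \tr[\diag(x) A \diag(y) B^\top]$, with $A = B = M$ and $x = y = u$. Because $M$ is symmetric, this yields
\begin{equation*}
u^\top M^{\circ 2} u = \tr\bigl[\diag(u)\, M\, \diag(u)\, M\bigr],
\end{equation*}
which matches the expression obtained for the left-hand side. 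Taking square roots gives the claimed equality.

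The computation is essentially two lines once the Hadamard-product trace identity is invoked; the only subtlety is noticing that the Frobenius norm of $S^{-1/2}(S^{\new}-S)S^{-1/2}$, after the change of variables $u = -A^\top \delta_y$, matches precisely the expansion of $u^\top M^{\circ 2}u$ via that identity. No further tools beyond the facts already stated in \Cref{sec_background} and the preliminaries are needed.
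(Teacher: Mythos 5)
Your proposal is correct and follows essentially the same route as the paper: both reduce the claim to showing $\|S^{-1/2}(S^{\new}-S)S^{-1/2}\|_F^2 = \delta_s^{\top} M^{\circ 2}\delta_s$ with $M = PS^{-1}P^{\top}$ and $\delta_s = -A^{\top}\delta_y$. The only difference is cosmetic — you invoke Fact~\ref{fac:Hadamard_product_property} to identify $\tr[\diag(u)M\diag(u)M]$ with $u^{\top}M^{\circ 2}u$, while the paper re-derives that identity by expanding $P^{\top}\diag(\delta_s)P$ as a sum of rank-one terms.
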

\begin{proof}
Note that if $s, s^{\new} \in \Sigma_{n,2d}^*$, then by the dual cone characterization (Theorem~\ref{prop:nesterov_dual_SOS}) $S$ and $S^{\new}$ are both PSD.

For convenience we define $M = P S^{-1} P^{\top} \in \R^{U \times U}$. Note that $H(y) = A \cdot M^{\circ 2} \cdot A^{\top}$. We also define $\delta_s = s^{\new} - s = -A^{\top} \delta_y$. $\forall u \in [U]$, we use $p_u \in \R^L$ to denote the $u$-th row of $P$.
\begin{equation}\label{eq:slowly_moving_1}
\begin{aligned}
\|S^{-1/2} S^{\new} S^{-1/2}  - I\|_F^2 &= \|S^{-1/2} \big( S^{\new} - S \big) S^{-1/2} \|_F^2 \\
&= \|S^{-1/2} \big( P \diag(\delta_s) P^{\top} \big) S^{-1/2} \|_F^2 \\
&=\tr\big(S^{-1}(P^\top \diag(\delta_s) P)S^{-1}(P^\top \diag(\delta_s) P)\big) \\
&= \tr\big(S^{-1}(\sum_{u \in U}(\delta_s)_u \cdot p_u p_u^\top)S^{-1}
(\sum_{v \in U}(\delta_s)_v \cdot p_v p_v^\top)
\big) \\
&= \sum_{u,v \in U} (\delta_s)_u (\delta_s)_v \cdot \tr\big(S^{-1} p_u p_u^\top S^{-1}
p_v p_v^\top
\big) \\
&= \sum_{u,v \in U} (\delta_s)_u (\delta_s)_v \cdot (p_v^\top S^{-1} p_u)^2 \\
&= \sum_{u,v \in U} (\delta_s)_u (\delta_s)_v \cdot M_{uv}^2
~=~ \|\delta_s\|_{M^{\circ 2}}^2,
\end{aligned}
\end{equation}
where the third step follows from $\|A\|_F^2 = \tr(A^{\top} A)$ and the cyclic property of trace, and the sixth step again follows from the cyclic property of trace.

Since $\delta_s = - A^{\top} \delta_y$, we have
\begin{equation}\label{eq:slowly_moving_2}
    \|\delta_s\|_{M^{\circ 2}}^2 = \delta_s^{\top} M^{\circ 2} \delta_s 
    = \delta_y^{\top} A M^{\circ 2} A^{\top} \delta_y 
    = \delta_y^{\top} H(y) \delta_y 
    = \|\delta_y\|_{H(y)}^2.
\end{equation}
Combining Eq.~\eqref{eq:slowly_moving_1} and \eqref{eq:slowly_moving_2} we get the bound in the lemma statement. 
\end{proof}

\subsection{Proof of correctness}
Finally we are ready to prove the correctness of Algorithm~\ref{alg:barrier}.
\begin{theorem}[Correctness of Algorithm~\ref{alg:barrier}]\label{thm:correct}
Consider the following optimization problem with $A \in \R^{m \times U}$, $b \in \R^m$, and $c \in \R^U$: 
\begin{equation*}
  \begin{aligned}
  \text{Primal:~~~} \min \; &\pr{c}{x} \quad \\
  \mathrm{s.t.}~~ Ax& =b \\
  x &\in \Sigma_{n,2d}\, ,\\
  \end{aligned}
  \quad\quad\quad
  \begin{aligned} 
  \text{Dual:~~~}\max \; & \pr{y}{b} \\
  \mathrm{s.t.}~~ A^\top y + s &= c \\
  s &\in \Sigma_{n,2d}^*\, . \\
  \end{aligned}
\end{equation*}
Let $\mathrm{OPT}$ denote the optimal objective value of this optimization problem.
Assume Slater's condition and that any primal feasible $x \in \Sigma_{n,2d}$ satisfies $\|x\|_1 \leq R$.

Then for any error parameters $\delta \in (0,1)$, $\epsilon_S \leq 0.01$, and $\epsilon_N \leq 0.05$, Algorithm~\ref{alg:barrier} outputs $x \in \Sigma_{n,2d}$ that satisfies
\begin{equation*}
    \pr{c}{x} \le \mathrm{OPT} + \delta \cdot R \|c\|_\infty \quad \text{and} \quad
    \|A x - b\|_1 \le 8\delta L \cdot (LR \|A\|_\infty + \|b\|_1).
\end{equation*}
\end{theorem}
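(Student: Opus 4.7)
The plan is to combine the initialization reduction of \Cref{lem:init}, the invariance of the Newton step under an approximate Hessian (\Cref{lem:invariant_newton}), the low-rank maintenance of \Cref{alg:low_rank_update,alg:hessian_inverse_update}, and the duality-gap bound of \Cref{lem:approximate_optimality} into a standard barrier-method convergence argument. First I would invoke \Cref{lem:init} to replace \eqref{SOS_program_primal_dual} by the auxiliary system \eqref{auxiliary_SOS_program_primal_dual} on the product cone $\Sigma_{n,2d}^*\times\R_{\geq 0}^2$; this yields a strictly feasible starting triple $(\ov y^0,\ov s^0)$ with $\|\ov g_{\eta^0}(\ov y^0)\|_{\ov H(\ov y^0)^{-1}}=0$ at $\eta^0=1$ and a barrier parameter $\nu_F\le L+2=O(L)$. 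Using the extended matrix $\ov P$ described at the end of \Cref{sec:init}, the Hessian updates in \Cref{alg:barrier} apply verbatim to the auxiliary problem.

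The core of the proof is an induction on the iteration index $i=0,1,\ldots,t$ establishing the two invariants $\|g_{\eta_i}(y_i)\|_{H(y_i)^{-1}}\le \epsilon_N$ and $\wt S_i \approx_{\epsilon_S} S_i$. The base case follows from \Cref{lem:init} and the initialization $\wt S^{(0)}=S^{(0)}$ on \Cref{line:initial_S}. For the inductive step, I would first chain \Cref{fac:spectral_approx} to transfer the spectral approximation through the composition defining $H$:
\[ \wt S_i \approx_{\epsilon_S} S_i \;\Longrightarrow\; \wt S_i^{-1}\approx_{\epsilon_S} S_i^{-1} \;\Longrightarrow\; P\wt S_i^{-1}P^\top\approx_{\epsilon_S}PS_i^{-1}P^\top \;\Longrightarrow\; (P\wt S_i^{-1}P^\top)^{\circ 2}\approx_{2\epsilon_S}(PS_i^{-1}P^\top)^{\circ 2}, \]
followed by a final two-sided conjugation by $A$, which gives $\wt H_i := N_i^{-1}\approx_{2\epsilon_S} H(y_i)$. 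Since $\epsilon_S\le 0.01$, the hypothesis $\wt H\approx_{0.02} H(y)$ of \Cref{lem:invariant_newton} is met, so that lemma simultaneously delivers $\|g_{\eta_{i+1}}(y_{i+1})\|_{H(y_{i+1})^{-1}}\le\epsilon_N$ (after the growth $\eta_{i+1}=\eta_i(1+\alpha)$ on \Cref{line:eta}) and the bound $\|\delta_y\|_{H(y_i)}\le 2\epsilon_N\le 0.02$. Feeding this bound into \Cref{lem:slowly_moving} yields $\|S_i^{-1/2}S_{i+1}S_i^{-1/2}-I\|_F\le 0.02$, which is exactly the slowly-changing hypothesis of \Cref{lem:low_rank_update}. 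That lemma then guarantees $\wt S_{i+1}\approx_{\epsilon_S} S_{i+1}$, and \Cref{lem:hessian_inverse_update} (or a direct recomputation when $V_1=V_2=\mathtt{null}$, in which case $\wt S_{i+1}=S_{i+1}$) keeps $N_{i+1}=\bigl(A(P\wt S_{i+1}^{-1}P^\top)^{\circ 2}A^\top\bigr)^{-1}$, closing the induction.

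After $t=40\epsilon_N^{-1}\sqrt L\log(L/\delta)$ iterations the multiplicative growth with $\alpha=\epsilon_N/(20\sqrt L)$ yields $\eta_t\ge(1+\alpha)^t\ge (L/\delta)^{2}$, so \Cref{lem:approximate_optimality} gives a duality gap at most $\nu_F(1+2\epsilon_N)/\eta_t\le \delta^{2}$ (with a possibly adjusted constant in $t$). Extracting $\wh x := R\cdot\ov x_{[:U]}$ where $\ov x_{[:U]}=g_{\Sigma^*}(\ov s_{[:U]})$ and invoking the final assertion of \Cref{lem:init} then produces $\wh x\in\Sigma_{n,2d}$ with the two claimed bounds on $\pr{c}{\wh x}$ and $\|A\wh x-b\|_1$. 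The hard part will be the Hadamard-square link in the spectral chain: standard IPM analyses transfer slack-matrix approximation directly to a Hessian approximation, whereas the nonlinearity $M\mapsto M^{\circ 2}$ here forces the use of Part~\ref{part:spectral_hadamard} of \Cref{fac:spectral_approx} and requires $\epsilon_S$ to be half of the tolerance demanded by \Cref{lem:invariant_newton}; every remaining piece, including the choice of $t$ and $\alpha$, reduces to routine barrier-method bookkeeping.
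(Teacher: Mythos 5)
Your proposal is correct and follows essentially the same route as the paper's proof: the same two-part induction (Newton-step invariant plus $\wt S\approx_{\epsilon_S}S$), the same spectral-approximation chain through $M\mapsto M^{\circ 2}$ via Fact~\ref{fac:spectral_approx} to get $\wt H\approx_{2\epsilon_S}H(y)$, and the same final combination of Lemma~\ref{lem:approximate_optimality} with the guarantees of Lemma~\ref{lem:init}. The only discrepancy you share with the paper itself is the bound $\|\delta_y\|_{H(y)}\le 2\epsilon_N\le 0.02$, which needs $\epsilon_N\le 0.01$ rather than the $\epsilon_N\le 0.05$ allowed in the theorem statement; this is a constant-tuning issue, not a structural gap.
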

\begin{proof}
We consider the optimization problem $\min -b^{\top} y$ s.t. $y \in \ov{D}_F$, where $F: \R^m \to \R_+$ is the barrier function defined in Eq.~\eqref{eq:barrier}, and $\ov{D}_F$ is the closure of the domain of $F$. The barrier parameter of $F$ is $\nu_F = L$. This optimization problem is equivalent to the dual formulation and its optimal value is $-\mathrm{OPT}$. For any $\eta$, let $F_{\eta}(y) = -\eta b^{\top} y + F(y)$.

In the beginning Algorithm~\ref{alg:barrier} first uses Lemma~\ref{lem:init} to convert the optimization problem to another form which has an initial feasible solution $y$ that is close to the optimal solution of $F_{\eta}$ with $\eta=1$. The initial $y$ satisfies $\|g_{\eta}(y)\|_{H(y)^{-1}} \leq \epsilon_N$ by \Cref{lem:init}. Initially we also have $\wt{S} = S = P^{\top} \diag(s) P$ (Line~\ref{line:initial_S} in Algorithm~\ref{alg:barrier}).

Next we prove the correctness of Algorithm~\ref{alg:barrier} inductively. At each iteration, we assume the following induction hypothesis is satisfied: (1) $\|g_{\eta}(y)\|_{H(y)^{-1}} \leq \epsilon_N$, (2) $\wt{S} \approx_{\epsilon_S} S$.
We aim to prove that the updated $y^{\new}$, $\eta^{\new}$, $S^{\new}$, and $\wt{S}^{\new}$ still satisfy these two conditions.

In Lemma~\ref{lem:hessian_inverse_update} we have proved that in Algorithm~\ref{alg:barrier} we always maintain $N = \big( A \cdot (P \wt{S}^{-1} P^{\top})^{\circ 2} \cdot A^{\top} \big)^{-1}$. Let $\wt{H} = N^{-1}$, we have
\begin{equation*}
    \wt{H} = A \cdot \big( P \wt{S}^{-1} P^{\top} \big)^{\circ 2} \cdot A^{\top} \approx_{2 \epsilon_S} A \cdot \big( P S^{-1} P^{\top} \big)^{\circ 2} \cdot A^{\top} = H(y),
\end{equation*}
where in the second step we use the induction hypothesis that $\wt{S} \approx_{\epsilon_S} S$, and by Fact~\ref{fac:spectral_approx} we have $\wt{S}^{-1} \approx_{\epsilon_S} S^{-1}$, and hence $P \wt{S}^{-1} P^{\top} \approx_{\epsilon_S} P S^{-1} P^{\top}$, and hence $(P \wt{S}^{-1} P^{\top})^{\circ 2} \approx_{2 \epsilon_S} (P S^{-1} P^{\top})^{\circ 2}$.

The new vector $y^{\new}$ is computed as $y^{\new} = y + \delta_y$ where $\delta_y = - \wt{H}^{-1} g_{\eta}(y)$ (Line~\ref{line:delta_y} and \ref{line:y_new} of Algorithm~\ref{alg:barrier}). And $\eta$ is updated to $\eta^{\new} = \eta \cdot (1 + \frac{\epsilon_N}{20 \sqrt{L}})$ (Line~\ref{line:eta} of Algorithm~\ref{alg:barrier}). Since $\|g_{\eta}(y)\|_{H(y)^{-1}} \leq \epsilon_N$, and $\wt{H} \approx_{2 \epsilon_S} H(y)$ where $2 \epsilon_S \leq 0.02$ by its definition in Algorithm~\ref{alg:low_rank_update}, the requirements of Lemma~\ref{lem:invariant_newton} are satisfied, so we have
\begin{equation*}
    \|g_{\eta^{\new}}(y^{\new})\|_{H(y^{\new})^{-1}} \leq \epsilon_N, ~~~\text{and} ~~~~\|\delta_y\|_{H(y)} \leq 2 \epsilon_N.
\end{equation*}
This proves the first induction hypothesis.

Then using Lemma~\ref{lem:slowly_moving} and since $\epsilon_N \leq 0.01$ by its definition in Algorithm~\ref{alg:barrier}, we have
\begin{equation*}
    \|S^{-1/2} (S^{\new}) S^{-1/2} - I\|_F \leq \|\delta_y\|_{H(y)} \leq 2 \epsilon_N \leq 0.02.
\end{equation*}
Thus the input matrix $S^{\new}$ to \textsc{LowRankUpdate} satisfies the requirement of Eq.~\eqref{eq:slow_changing_requirement} of Lemma~\ref{lem:low_rank_update}, and we have that $\wt{S}^{\new} \approx_{\epsilon_S} S^{\new}$. This proves the second induction hypothesis.

Finally, we know that after $t = 40 \epsilon_N^{-1} \sqrt{L} \log(L/\delta)$ iterations, $\eta$ becomes $(1+\frac{\epsilon_N}{20 \sqrt{L}})^{t} \geq 2L / \delta^2$, so using Lemma~\ref{lem:approximate_optimality}, we have
\begin{equation*}
    b^{\top} y \geq \mathrm{OPT} - \frac{\nu_F}{\eta} \cdot (1 + 2 \epsilon_N) \geq \mathrm{OPT} - \delta^2.
\end{equation*}
Thus the initialization lemma (Lemma~\ref{lem:init}) ensures that we have a solution $x \in \Sigma_{n,2d}$ to the original primal optimization problem which satisfies
\begin{equation*}
    \pr{c}{x} \le \mathrm{OPT} + \delta \cdot R \|c\|_\infty, ~~~~
    \|A x - b\|_1 \le 8\delta L \cdot (LR \|A\|_\infty + \|b\|_1). \qedhere
\end{equation*}
\end{proof} \section{Time complexity}

\subsection{Worst case time}
We first bound the worst case running time of Algorithm~\ref{alg:barrier}. The running time of the $i$-th iteration depends on the updated rank $r_i$ of \textsc{LowRankUpdate}, which is defined to be the size of $V_1^{(i)}$ if $V_1^{(i)} \neq \mathtt{null}$, and $U/L$ otherwise (see Lemma~\ref{lem:low_rank_update}).
\begin{lemma}[Worst case time of Algorithm~\ref{alg:barrier}]\label{lem:worst_case_time}
In Algorithm~\ref{alg:barrier}, the initialization time is $O(U^{\omega})$, and the running time in the $i$-th iteration is $O(\Tmat(U,U,\min\{Lr_i,U\}))$.
\end{lemma}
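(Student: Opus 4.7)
The plan is to walk through Algorithm~\ref{alg:barrier} line by line and bound each operation, splitting the analysis into (a) the initialization and (b) the body of a single iteration, with the iteration further split by whether \textsc{LowRankUpdate} returns $\mathtt{null}$ or an actual rank-$r_i$ update.

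For the initialization, I would argue that every line runs in $O(U^\omega)$. The only nontrivial ones are Lines~\ref{line:initial_S}--\ref{line:g_init}: $S = P^\top \diag(s) P$ costs $O(\Tmat(L,U,L))$; $T = S^{-1}$ costs $O(L^\omega)$; computing $PTP^\top$ costs $O(\Tmat(U,L,L) + \Tmat(U,U,L))$, its Hadamard square costs $O(U^2)$, sandwiching by $A$ and $A^\top$ costs $O(\Tmat(m,U,U) + \Tmat(m,U,m))$, and the final $m\times m$ inversion costs $O(m^\omega)$; the gradient $g$ needs only the diagonal of $P(P^\top\diag(s)P)^{-1}P^\top$, which can be obtained by first computing $Q = P S^{-1}$ in $O(\Tmat(U,L,L))$ and then taking row-wise dot products with $P$ in $O(UL)$. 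Since $m \le U$ and $L \le U$, every term is $O(U^\omega)$.

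For a single iteration, consider first the case $V_1 = V_2 = \mathtt{null}$. By the definition in the lemma statement we take $r_i = U/L$, so $\min\{Lr_i, U\} = U$ and the target bound is $\Tmat(U,U,U) = O(U^\omega)$. All operations in Lines~\ref{line:if_start}--\ref{line:if_end} of the ``if'' branch (recomputing $T^{\new} = (\wt S^{\new})^{-1}$ and rebuilding $N^{\new}$ from scratch) run in $O(U^\omega)$ by the same bookkeeping as in the initialization, and the remaining lines (Lines~\ref{line:delta_y}--\ref{line:compute_S_new}, \ref{line:low_rank_update}, and \ref{line:g}) are handled as in the other case below.

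In the case where $V_1, V_2 \in \R^{L\times r_i}$ are actual updates, the target bound is $\Tmat(U,U,Lr_i)$, and Lemma~\ref{lem:hessian_inverse_update} already gives this running time for the call to \textsc{UpdateHessianInv} on Line~\ref{line:hessian_inverse_update}. I then need to verify that every other line in the iteration is subsumed by $\Tmat(U,U,Lr_i) \ge \Tmat(U,U,L) \ge U^2$. This is immediate for Lines~\ref{line:delta_y}--\ref{line:eta} which take $O(m^2 + mU) \le O(U^2)$; for $S^{\new} = P^\top \diag(s^{\new}) P$ on Line~\ref{line:compute_S_new}, which takes $O(\Tmat(L,U,L)) \le O(\Tmat(U,U,L))$; for \textsc{LowRankUpdate} on Line~\ref{line:low_rank_update}, which by Lemma~\ref{lem:low_rank_update} takes $O(L^\omega) \le O(\Tmat(L,L,L)) \le O(\Tmat(U,U,L))$; and for the gradient update on Line~\ref{line:g}, where as in the initialization only the diagonal of $P(P^\top\diag(s^{\new})P)^{-1}P^\top$ is needed, yielding cost $O(L^\omega + \Tmat(U,L,L) + UL + mU) \le O(\Tmat(U,U,L))$.

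The main obstacle I expect is cosmetic rather than conceptual: one must avoid materializing the full $U\times U$ matrix $P(P^\top \diag(s) P)^{-1} P^\top$ when computing the gradient $g$, because that would cost $\Theta(U^2 L)$ and break the per-iteration bound in the small-$r_i$ regime. The trick of computing only $Q = P S^{-1}$ and then extracting the diagonal of $Q P^\top$ by row-wise inner products sidesteps this and is the only point in the proof that is not a routine appeal to $m \le U$, $L \le U$, or the monotonicity $\Tmat(U,U,a) \le \Tmat(U,U,b)$ for $a \le b$.
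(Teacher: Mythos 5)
Your proof is correct and follows essentially the same route as the paper: a line-by-line accounting, a case split on whether \textsc{LowRankUpdate} returns $\mathtt{null}$, and appeals to Lemma~\ref{lem:hessian_inverse_update} and Lemma~\ref{lem:low_rank_update} for the dominant steps. The one thing worth flagging is that the ``main obstacle'' you identify is illusory: materializing the full matrix $P(P^\top\diag(s)P)^{-1}P^\top$ costs $\Tmat(U,L,U)=\Tmat(U,U,L)$ with fast rectangular matrix multiplication, not $\Theta(U^2L)$, so it fits the per-iteration budget $\Tmat(U,U,\min\{Lr_i,U\}) \ge \Tmat(U,U,L)$ in every case; this is exactly what the paper does, and your diagonal-extraction trick, while valid, is not needed.
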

\begin{proof}
{\bf Initialization time.} The most time-consuming step of initialization is Line~\ref{line:N_init}, where computing $N = \big( A (P T P^{\top})^{\circ 2} A^{\top} \big)^{-1}$ takes $O(\Tmat(U,U,L) + \Tmat(U,U,m))$ time. This is bounded by $O(U^{\omega})$ since $L, m \leq U$.

{\bf Time per iteration.} In each iteration the most time-consuming steps are (1) computing $S^{\new}$ and calling \textsc{LowRankUpdate} on Line~\ref{line:compute_S_new}-\ref{line:low_rank_update}, (2) executing the if-clause on  Line~\ref{line:if_start}-\ref{line:if_end}, and (3) computing $g^{\new}$ on Line~\ref{line:g}.

\begin{enumerate}
    \item Computing $S^{\new}$ on Line~\ref{line:compute_S_new} takes $\Tmat(U,L,L)$ time. Calling $\textsc{LowRankUpdate}$ on Line~\ref{line:low_rank_update} takes $O(L^{\omega})$ time by Lemma~\ref{lem:low_rank_update}.
    \item In the if-clause on  Line~\ref{line:if_start}-\ref{line:if_end}, if $V_1 = V_2 = \mathtt{null}$, then $L r_i \geq U$, and we compute $N^{\new} = \big( A \cdot (P T^{\new} P^{\top})^{\circ 2} \cdot A^{\top} \big)^{-1}$, which takes $O(\Tmat(U,U,U))$ time. Otherwise we call \textsc{UpdateHessianInv} on Line~\ref{line:hessian_inverse_update}, which takes $O(\Tmat(U,U,L r_i))$ time by Lemma~\ref{lem:hessian_inverse_update}. In total the if-clause has running time $O(\Tmat(U,U, \min\{L r_i, U\}))$.
    \item Computing the gradient $g = -\eta^{\new} \cdot b + A \cdot \diag\Big( P \big( P^{\top} \diag(s^{\new}) P \big)^{-1} P^{\top} \Big)$ on Line~\ref{line:g} takes $O(\Tmat(U,U,L))$ time since $m \leq U$.
\end{enumerate}

Thus the total time per iteration is $O(\Tmat(U,U, \min\{L r_i, U\}))$.
\end{proof}

\subsection{Amortized time}\label{sec:amortize}
In this section we bound the amortized running time of Algorithm~\ref{alg:barrier}. 

Let $\omega$ be the matrix multiplication exponent, let $\alpha$ be the dual matrix multiplication exponent. The current best values are $\omega \approx 2.373$ and $\alpha \approx 0.314$ \cite{l14, gu18, aw21}. Note that the current best values of $\omega$ and $\alpha$ satisfies that $\alpha \geq 5 - 2 \omega$.
We use the following modified lemma from \cite{hjst21}:
\begin{lemma}[Helpful lemma for amortization, modified version of Lemma~10.13 of \cite{hjst21}]\label{lem:general_amortize_tool}
Let $t$ denote the total number of iterations. Let $r_i \in [L]$ be the rank for the $i$-th iteration for $i \in [t]$. Assume $r_i$ satisfies the following condition:
for any vector $g \in \R_+^L$ which is non-increasing, we have $\sum_{i=1}^t r_i \cdot g_{r_i} \leq O(t \cdot \|g\|_2)$.

If the cost in the $i$-th iteration is $O(\Tmat(U, U, \min\{L r_i, U\}))$, when $\alpha \geq 5 - 2 \omega$, the amortized cost per iteration is $U^{2 + o(1)} + U^{\omega - 1/2 + o(1)} \cdot L^{1/2}$.
\end{lemma}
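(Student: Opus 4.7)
My plan is to combine the per-iteration cost bound from Fact~\ref{fact:approx_Tmat_r_leq_n} with a carefully chosen ``test'' sequence $g$ that lets me invoke the hypothesis $\sum_i r_i g_{r_i} \le O(t\|g\|_2)$. The key parameter is $\beta := (\omega-2)/(1-\alpha)$, which by Fact~\ref{fact:bound_omega_alpha} satisfies $\beta \le 1$, so that any function of the form $r^{\beta-1}$ is non-increasing. The hypothesis $\alpha \ge 5-2\omega$ translates exactly into $2\beta \ge 1$, and this is the condition that will make a certain integral converge on the correct side.

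First I would unify the per-iteration cost across two regimes. When $Lr_i \le U$, Fact~\ref{fact:approx_Tmat_r_leq_n} gives
\[
\Tmat(U,U,Lr_i) \le U^{2+o(1)} + L^\beta r_i^\beta \cdot U^{2-\alpha\beta+o(1)}.
\]
When $Lr_i > U$ the cost collapses to $\Tmat(U,U,U)=U^\omega$, which can be rewritten as $L^\beta (U/L)^\beta \cdot U^{2-\alpha\beta}$ (using $\beta(1-\alpha)=\omega-2$). So in either case the second term is bounded by $L^\beta \cdot \min\{r_i^\beta,(U/L)^\beta\} \cdot U^{2-\alpha\beta+o(1)}$.

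Next I would design $g$ so that $r \cdot g_r = \min\{r^\beta,(U/L)^\beta\}$: set $g_r = r^{\beta-1}$ for $r \le U/L$ and $g_r = (U/L)^\beta r^{-1}$ for $r > U/L$. Both pieces are non-increasing, they agree at the crossover $r=U/L$, so $g$ is non-increasing on $[L]$ and the hypothesis applies. To bound $\|g\|_2^2 = \sum_r g_r^2$ I would estimate it by two integrals: $\int_1^{U/L} x^{2\beta-2}\,dx$ and $(U/L)^{2\beta}\int_{U/L}^L x^{-2}\,dx$. Using $2\beta \ge 1$, the first evaluates to $O((U/L)^{2\beta-1})$ (this is the step that genuinely uses the assumption $\alpha \ge 5-2\omega$), and the second evaluates to $O((U/L)^{2\beta-1})$ directly. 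Hence $\|g\|_2 = O((U/L)^{\beta-1/2})$, so that
\[
\sum_i \min\{r_i^\beta,(U/L)^\beta\} \;=\; \sum_i r_i g_{r_i} \;\le\; O\!\left(t \cdot (U/L)^{\beta - 1/2}\right).
\]

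Finally I would sum over all $t$ iterations: the first terms contribute $t \cdot U^{2+o(1)}$, and the second terms contribute
\[
U^{2-\alpha\beta+o(1)} \cdot L^\beta \cdot t \cdot (U/L)^{\beta-1/2} \;=\; t \cdot U^{\omega-1/2+o(1)} \cdot L^{1/2},
\]
where the last simplification uses $\beta(1-\alpha)=\omega-2$ so that the $L$ exponent collapses from $\beta - (\beta - 1/2) = 1/2$. Dividing by $t$ gives the claimed amortized cost per iteration. The main obstacle is really just picking the crossover at $r = U/L$ (rather than at $r=L$, as in the original statement in \cite{hjst21}); once the crossover is in the right place, the integral converges precisely when $\alpha \ge 5-2\omega$, and the remaining algebra is exponent bookkeeping.
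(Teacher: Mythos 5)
Your proposal is correct and follows essentially the same route as the paper's proof: the same appeal to Fact~\ref{fact:approx_Tmat_r_leq_n}, the same test sequence $g$ with crossover at $r=U/L$, the same integral estimate of $\|g\|_2$ using $2(\omega-2)/(1-\alpha)\ge 1$, and the same final exponent bookkeeping via $U^\omega = U^{2-\alpha\beta}L^\beta(U/L)^\beta$. The only cosmetic difference is that you unify the two cost regimes before summing, whereas the paper folds the $Lr_i>U$ case into the final summation.
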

For completeness we provide a proof of this lemma in Section~\ref{sec:proof_amortize}.

\begin{theorem}[Time of Algorithm~\ref{alg:barrier}]
When $\alpha \geq 5 - 2 \omega$, the running time of Algorithm~\ref{alg:barrier} is
\[
(U^{2} \cdot L^{1/2} + U^{\omega - 1/2} \cdot L) \cdot (\log(1/\delta) + U^{o(1)}).
\]
\end{theorem}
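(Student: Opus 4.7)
The plan is to combine the per-iteration bound from \Cref{lem:worst_case_time} with the amortization tool \Cref{lem:general_amortize_tool}, scaled by the total iteration count. By the parameter setting in \Cref{alg:barrier}, the loop runs for $t = 40\epsilon_N^{-1}\sqrt{L}\log(L/\delta) = O(\sqrt{L}\log(L/\delta))$ iterations, and \Cref{lem:worst_case_time} gives an $O(U^\omega)$ one-shot initialization cost plus a per-iteration cost of $O(\Tmat(U,U,\min\{Lr_i,U\}))$, where $r_i$ is the update rank produced by \textsc{LowRankUpdate} in the $i$-th iteration (with the convention $r_i = U/L$ when the cutoff branch is triggered).

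To apply \Cref{lem:general_amortize_tool} I need to verify its hypothesis: for every non-increasing $g \in \R_+^L$, $\sum_{i=1}^t r_i g_{r_i} \leq O(t\|g\|_2)$. This is precisely conclusion~(ii) of \Cref{lem:low_rank_update}, provided the sequence of exact slack matrices $S^{(i)} = P^\top \diag(s^{(i)}) P$ satisfies the slow-change bound $\|(S^{(i-1)})^{-1/2} S^{(i)} (S^{(i-1)})^{-1/2} - I\|_F \leq 0.02$. I would import this bound from the proof of \Cref{thm:correct}: there, \Cref{lem:slowly_moving} together with the Newton-step guarantee $\|\delta_y\|_{H(y)} \leq 2\epsilon_N$ from \Cref{lem:invariant_newton} gives exactly $\|S^{-1/2}S^{\new}S^{-1/2} - I\|_F \leq 2\epsilon_N \leq 0.02$ at every iteration. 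So the hypothesis of the amortization lemma holds.

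With the hypothesis in hand, \Cref{lem:general_amortize_tool} yields an amortized per-iteration cost of $U^{2+o(1)} + U^{\omega - 1/2 + o(1)} \cdot L^{1/2}$ under the assumption $\alpha \geq 5 - 2\omega$. Multiplying by the number of iterations $t = O(\sqrt{L}\log(L/\delta))$ gives total loop cost
\begin{equation*}
O(\sqrt{L}\log(L/\delta)) \cdot \bigl(U^{2+o(1)} + U^{\omega - 1/2 + o(1)} L^{1/2}\bigr) = \bigl(U^2 L^{1/2} + U^{\omega - 1/2} L\bigr)\cdot U^{o(1)} \cdot \log(L/\delta).
\end{equation*}
The $O(U^\omega)$ initialization cost is dominated by the $U^{\omega - 1/2}L$ term since $L \geq 1$ (with room to spare for $L$ larger than a constant). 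Finally I absorb $\log L$ into $U^{o(1)}$ and split $\log(L/\delta) = \log(1/\delta) + \log L$ to rewrite the bound in the desired form $(U^2 L^{1/2} + U^{\omega - 1/2} L)\cdot(\log(1/\delta) + U^{o(1)})$.

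The main obstacle is really the bookkeeping check that the $r_i$-sequence satisfies the hypothesis of the amortization lemma, since that is where the interplay between the IPM step-size control and the low-rank data structure shows up; once that is in place the rest is arithmetic combining the iteration count with the amortized per-iteration cost. A minor subtlety is the $U/L$ convention for $r_i$ when the cutoff branch of \textsc{LowRankUpdate} fires: on that branch the cost is $\Tmat(U,U,U)=U^\omega$, which is exactly $\Tmat(U,U,\min\{Lr_i,U\})$ for $r_i = U/L$, so the worst-case bound stated in \Cref{lem:worst_case_time} and the amortization statement remain consistent and no separate accounting is required.
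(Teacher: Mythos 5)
Your proposal follows essentially the same route as the paper: per-iteration and initialization bounds from \Cref{lem:worst_case_time}, verification of the rank condition via \Cref{lem:low_rank_update} (using the slow-change bound from \Cref{lem:slowly_moving} and \Cref{lem:invariant_newton}), then \Cref{lem:general_amortize_tool} multiplied by the iteration count $t = O(\sqrt{L}\log(L/\delta))$. One correction: your justification that the $O(U^{\omega})$ initialization cost is dominated by $U^{\omega-1/2}L$ ``since $L \geq 1$'' is wrong as stated --- the inequality $U^{\omega} \leq U^{\omega - 1/2} \cdot L$ requires $L \geq U^{1/2}$, which holds because of the standing dimension relation $U \leq L^2$ (this is exactly the reason the paper gives), not merely because $L \geq 1$.
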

\begin{proof}
Using Lemma~\ref{lem:worst_case_time} the initialization time is $O(U^{\omega}) \leq O(U^{\omega - 1/2} \cdot L)$ since $U \leq L^2$.

Using Lemma~\ref{lem:low_rank_update} we know that the ranks $r_i$ indeed satisfy the requirement of Lemma~\ref{lem:general_amortize_tool}, and since the worst case time per iteration is $O(\Tmat(U,U,\min\{Lr_i, U\}))$ (Lemma~\ref{lem:worst_case_time}), using Lemma~\ref{lem:general_amortize_tool} the time per iteration is $U^{2 + o(1)} + U^{\omega - 1/2 + o(1)} \cdot L^{1/2}$. Since there are in total $t = 40 \epsilon_N^{-1} \sqrt{L} \log(L/\delta)$ iterations, we get the total running time claimed in the lemma statement.
\end{proof}

\subsection{Comparison with previous results}
In this section we compare the running time of \cite{py19}, \cite{jiang2020faster,hjst21}, and our result. We assume that $m = \Theta(U)$ when making the comparisons.

Ignoring $\log(1/\delta)$ and $U^{o(1)}$ factors, and since $L \le U \le L^2$, the running times are
\begin{equation*}
\begin{aligned}
\text{\cite{py19} (SOS)}: &~ L^{0.5} \cdot U^{\omega}, \\
\text{\cite{jiang2020faster, hjst21} (SDP)\footnotemark}: &~ L^{0.5} \cdot \min\{UL^2 + U^{\omega},~ L^4 + L^{2 \omega - 0.5}\}, \\
\text{Ours (SOS)}: &~ L^{0.5} \cdot (U^2 + U^{\omega - 0.5} \cdot L^{0.5}).
\end{aligned}
\end{equation*}
\footnotetext{When solving SOS, \cite{jiang2020faster} has running time $O(L^{0.5} \cdot (U L^2 + U^{\omega} + L^{\omega})) \leq O(L^{0.5} \cdot (U L^2 + U^{\omega}))$, and \cite{hjst21} has running time $O(L^{0.5} \cdot (U^2 + L^4) + U^{\omega} + L^{2 \omega}) \leq O(L^{4.5} + L^{2 \omega})$ since $L \leq U \leq L^2$.}
\vspace{-5mm}

\paragraph*{Current $\omega$ and $\alpha$.} Plugging in the current best values $\omega \approx 2.373$ and $\alpha \approx 0.314$, we have
\begin{equation*}
\begin{aligned}
\text{\cite{py19} (SOS)}: &~ L^{0.5} \cdot U^{2.373}, \\
\text{\cite{jiang2020faster, hjst21} (SDP)}: &~ L^{0.5} \cdot \min\{ UL^2 + U^{2.373}, ~ L^{4.246} \} \\
= &~ L^{0.5} \cdot \begin{cases}
UL^2 & \text{when~} U\in (L, L^{1.457}], \\
U^{2.373} & \text{when~} U\in (L^{1.457}, L^{1.789}],\\
L^{4.246} & \text{when~} U\in (L^{1.789}, L^2),
\end{cases}
\\
\text{Ours (SOS)}: &~ L^{0.5} \cdot (U^{2} + U^{1.873} L^{0.5}).
\end{aligned}
\end{equation*}
Note that our running time is always better than the previous results, and for several values of $L$ and $U$ we improve by a polynomial factor. See Figure~\ref{fig:runtimes} for an illustration. \section{Weighted SOS}\label{sec:wsos}

In this section we provide the background of weighted sum-of-squares (WSOS) optimization, following the notation of \cite{py19}. 

Recall the motivation for sum-of-squares to solve polynomial optimization in \eqref{eq_poly_opt}. Given $k$ non-zero polynomials $f_1, \ldots, f_k$, we aim to optimize over the polynomials that are non-negative on $\mathcal{S} := \{x \in \R^n \mid f_i(x) \ge 0, \forall i \in [k]\}$. While polynomial optimization is hard in general, we can relax the problem to optimize over the following set of non-negative polynomials over $\mathcal{S}$. 
\begin{definition}[WSOS polynomials]\label{def:wsos}
Let $\boldf = (f_1, \ldots, f_k)$ where $f_i$ are non-zero polynomials, and let $\bd = (d_1,\ldots, d_k)$ where $d_i > 0$. Define $\V_{n, 2 \bd}^{\boldf} := \{\sum_{i=1}^k f_i s_i \mid s_i \in \V_{n, 2d_i}, \forall i \in [k]\}$. The WSOS polynomials are defined as
\begin{equation*}
\Sigma_{n,2\bd}^{\boldf} := \Big\{\sum_{i = 1}^k f_i s_i \mid s_i \in \Sigma_{n, 2 d_i}, \forall i \in [k]\Big\} \subset \V_{n,2\bd}^{\boldf}.
\end{equation*}
\end{definition}
We assume that the set $\mathcal{S}$ is unisolvent for $\V_{n, 2\bd}^{\boldf}$, so that $\Sigma_{n,2\bd}^{\boldf}$ is a pointed and closed cone in $\V_{n,2\bd}^{\boldf}$. (See Proposition 6.1 of \cite{py19}.) Note that w.l.o.g.\ one can add $f_{k+1} = 1$ to $\boldf$ to capture the SOS polynomials. Define 
\[
U := \dim(\V_{n, 2\bd}^{\boldf}), ~~\text{and}~~ L_i := \dim(\mathcal V_{n,d_i}) = {n + d_i \choose n}, \forall i \in [k].
\]
Note that $U \geq L_i$ for all $i \in [k]$. We also define $L = \max_i L_i$. Similar to SOS optimization, after fixing a basis $(q_1, q_2, \cdots, q_U)$ of $\V_{n, 2\bd}^{\boldf}$, a polynomial in $\V_{n, 2\bd}^{\boldf}$ corresponds to the vector of coefficients in $\R^U$.

We consider the following optimization problem over the WSOS cone ($A \in \R^{m \times U}$, $c \in \R^U$, $b \in \R^m$):
\begin{equation}
  \label{WSOS_program_primal_dual}
  \tag{WSOS}
  \begin{aligned}
  \text{Primal:~~~} \min \; &\pr{c}{x} \quad \\
  \mathrm{s.t.}~~ Ax& =b \\
  x &\in \Sigma_{n,2\bd}^{\boldf}\, ,\\
  \end{aligned}
  \quad\quad\quad
  \begin{aligned} 
  \text{Dual:~~~}\max \; & \pr{y}{b} \\
  \mathrm{s.t.}~~ A^\top y + s &= c \\
  s &\in \Sigma_{n,2\bd}^{\boldf \; *}\, . \\
  \end{aligned}
\end{equation}
Here $\Sigma_{n,2\bd}^{\boldf \; *}$ denotes the dual cone of $\Sigma_{n,2\bd}^{\boldf}$. 

Similar to \Cref{prop:nesterov_dual_SOS}, the dual cone $\Sigma_{n,2\bd}^{\boldf \; *}$ again admits a semidefinite characterisation. For any basis $\mathbf{q} = (q_1, q_2, \cdots, q_U)$ of $\V_{n, 2\bd}^{\boldf}$, and any basis $\mathbf{p}_i = (p_{i, 1}, \ldots, p_{i, L_i})$ of $\mathcal{V}_{n,d_i}$ for $i \in [k]$, let $\Lambda_i: \R^U \to \R^{L_i \times L_i}$ be the unique linear mapping satisfying $\Lambda_i(\mathbf{q}) = \mathbf{p}_i \mathbf{p}_i^\top$. Then the dual cone admits the characterization
\begin{equation}\label{eq:wsos_dual_characterization}
    \Sigma_{n,2\bd}^{\boldf \; *} = \{s \in \R^U \mid \Lambda_i(s) \succeq 0, \forall i \in [k]\}.
\end{equation}
For more details see Theorem 17.6 of \cite{nesterov2000squared}.

\paragraph*{Barrier function under interpolant basis}
\cite{py19} extends the interpolant basis to WSOS. Let $\mathcal{T} = \{t_1, \cdots, t_U\} \subset \R^n$ be a unisolvent set for $\V_{n, 2\bd}^{\boldf}$. Let $\mathbf{q} = (q_1, q_2, \cdots, q_U)$ be the Lagrange polynomials corresponding to $\mathcal{T}$, and $\mathbf{q}$ forms a basis of $\V_{n, 2\bd}^{\boldf}$. For all $i \in [k]$, choose any basis $\mathbf{p}_i = (p_{i, 1}, \ldots, p_{i, L_i})$ of $\mathcal{V}_{n,d_i}$. It was shown in \cite{py19} that the unique linear mapping $\Lambda_i: \R^U \to \R^{L_i \times L_i}$ satisfying $\Lambda_i(\mathbf{q}) = \mathbf{p}_i \mathbf{p}_i^\top$ is
\begin{equation}
\Lambda_i(s): = P_i^\top \diag(\boldf_i \circ s)P_i.
\end{equation}
where $P_i \in \R^{U \times L_i}$ is defined as $(P_i)_{u,\ell} := p_{i,\ell}(t_u)$, and $\boldf_i := (f_i(t_1), \ldots, f_i(t_U)) \in \R^U$. 

Define $F_i: \R^m \to \R$ as $F_{i}(y) = - \log \det(\Lambda_i(c - A^{\top} y))$. We get the following barrier with barrier parameter at most $\sum_{i = 1}^k L_i$:
\begin{equation*}
F^\boldf(y) = \sum_{i=1}^k F_i(y) = - \sum_{i = 1}^k \log \det(\Lambda_i(c - A^{\top} y)).
\end{equation*}
For any $\eta >0$ we define a function $F_{\eta}^\boldf: \R^m \to \R$ as $F_{\eta}(y) = -\eta \cdot b^{\top} y + F^\boldf(y)$.

The corresponding gradients and Hessians of the $F_i$'s are:
\begin{equation*}
\begin{aligned}
    g_{i}(y) &= A\cdot \Big(\boldf_i \circ \diag(P_i(P_i^\top \diag(\boldf_i \circ s)P_i)^{-1}P_i^\top)\Big), \\
    H_{i}(y) &= A \cdot \Big((\boldf_i\boldf_i^\top) \circ (P_i(P_i^\top \diag(\boldf_i \circ s) P_i)^{-1}P_i^\top)^{\circ 2}\Big) \cdot A^\top,
\end{aligned}
\end{equation*}
and so gradient and Hessian of $F^\boldf_\eta(y)$ are
\begin{equation}\label{eq:wsos_barrier_gradient_hessian}
g^\boldf_{\eta}(y) = -\eta b + \sum_{i=1}^k g_{i}(y), ~~\text{and}~~H^\boldf(y) = \sum_{i=1}^k H_{i}(y).
\end{equation}
Note that we omit the subscript $\eta$ for the Hessian since it is independent of $\eta$.

\subsection{Extension of our SOS algorithm to WSOS}\label{sec:wsos_algorithm}
We next show how to extend our SOS algorithm (Algorithm~\ref{alg:barrier}) to WSOS. The main adjustments are the following.
\begin{enumerate}
    \item {\bf Maintain $k$ approximate slack.} For each $i \in [k]$, we maintain a matrix $\wt{S}_i \in \R^{L_i \times L_i}$ to approximate the exact matrix $S_i := \Lambda_i(s) = P_i^\top \diag(\boldf_i \circ s) P_i \in \R^{L_i \times L_i}$. We also maintain $T_i = \wt{S}_i^{-1}$. In each iteration we compute $S_i^{\new} = P^{\top} \diag(\boldf_i \circ s^{\new}) P_i \in \R^{L_i \times L_i}$ 
    \item {\bf \textsc{LowRankUpdate} on diagonal block matrices.} In the $j$-th iteration, to compute the low-rank update to the $\wt{S}_i$'s, we call \textsc{LowRankUpdate} (Algorithm~\ref{alg:low_rank_update}) with the two block diagonal matrices $\diag(S_1^{\new}, \cdots, S_k^{\new})$ and $\diag(\wt{S}_1, \cdots, \wt{S}_k)$ as inputs. We do not need to explicitly construct these two block diagonal matrices. It's easy to check that in \textsc{LowRankUpdate} all the computations maintain the block diagonal structure. Thus the outputs of \textsc{LowRankUpdate} are also block diagonal matrices, and for all $i \in [k]$ we read off $\wt{S}_i^{\new} \in \R^{L_i \times L_i}$, and the low-rank updates $V_{i,1}, V_{i,2} \in \R^{L_i \times r_{i,j}}$.

    \item {\bf Extend \textsc{UpdateHessianInv} to WSOS.} We extend $\textsc{UpdateHessianInv}$ (Algorithm~\ref{alg:hessian_inverse_update}), as shown in $\textsc{UpdateHessianInvWSOS}$ (Algorithm~\ref{alg:hessian_inverse_update_wsos}). Now we need to stack the $k$ updates of the $H_i$'s together, and perform a rank-$\sum_{i=1}^k r_{i,j}$ update. Using a similar proof as that of Lemma~\ref{lem:hessian_inverse_update}, we can show that 
    \begin{equation*}
    \begin{aligned}
    & T_i^{\new} = (\wt{S}_i^{\new})^{-1} \in \R^{L_i \times L_i},~~ \forall i \in [k], \\
    & N^{\new} = \Big( A \cdot \big(\sum_{i=1}^k (\boldf_i \boldf_i^{\top}) \circ (P_i (\wt{S}_i^{\new})^{-1} P_i^{\top})^{\circ 2} \big) \cdot A^{\top} \Big)^{-1} \in \R^{m \times m}.
    \end{aligned}
    \end{equation*}
\end{enumerate}

\begin{algorithm}[!ht]
    \caption{\textsc{UpdateHessianInvWSOS}}
    \label{alg:hessian_inverse_update_wsos}
    \SetKwInOut{Input}{Input}
    \SetKwInOut{Output}{Output}
    \SetKw{And}{\textbf{and}}
    \Input{$T_i \in \R^{L_i \times L_i}$ for all $i \in [k]$, $N \in \R^{m \times m}$, $V_{i,1},V_{i,2} \in \R^{L_i \times r_i}$ for all $i \in [k]$} 
    \Output{$T_i^{\new} \in \R^{L_i \times L_i}$ for all $i \in [k]$, $N^{\new} \in \R^{m \times m}$}
    
    \tcp{Step 1}
    
    \For{$i = 1, 2, \cdots, k$}{
    $\ov{V_{i,1}} \leftarrow - T V_{i,1} \cdot (I + V_{i,2}^{\top} T V_{i,1} )^{-1}$ \tcp*{$\ov{V_{i,1}} \in \R^{L_i \times r_i}$}
    
    $\ov{V_{i,2}} \leftarrow T V_{i,2}$ \tcp*{$\ov{V_{i,2}} \in \R^{L_i \times r_i}$}

    $T_i^{\new} \leftarrow T_i + \ov{V_{i,1}} \cdot \ov{V_{i,2}}^{\top}$ \tcp*{$T_i^{\new} \in \R^{L_i \times L_i}$}
    }
    
    \tcp{Step 2}
    
    \For{$i = 1, 2, \cdots, k$}{
    $Y'_i \leftarrow \diag(\boldf_i) \cdot [2 P_i T_i, P_i \ov{V_{i,1}}]$ \tcp*{$Y'_i \in \R^{U \times (L_i+r_i)}$}
    
    $Z'_i \leftarrow \diag(\boldf_i) \cdot [P_i, P_i \ov{V_{i,2}}]$ \tcp*{$Z'_i \in \R^{U \times (L_i+r_i)}$}
    
    $Y_i \leftarrow [\diag(u_{i,1})Y'_i, \cdots, \diag(u_{i,r})Y'_i]$, $u_{i,j}$ is $j$-th col. of $P_i \ov{V_{i,1}}$ \tcp*{$Y_i \in \R^{U \times (L_i+r_i) r_i}$}
    
    $Z_i \leftarrow [\diag(v_{i,1})Z'_i, \cdots, \diag(v_{i,r})Z'_i]$, $v_{i,j}$ is $j$-th col. of $P_i \ov{V_{i,2}}$ \tcp*{$Z_i \in \R^{U \times (L_i+r_i) r_i}$}
    }
    
    $Y \leftarrow [Y_1, \cdots, Y_k]$,~~ $Z \leftarrow [Z_1, \cdots, Z_k]$ \tcp*{$Y,Z \in \R^{U \times \sum_{i=1}^k(L_i + r_i) r_i}$}
    
    \tcp{Step 3}
    
    $N^{\new} \leftarrow N - N \cdot (AY) \cdot \big(I + (AZ)^{\top} N (AY) \big)^{-1} \cdot (AZ)^{\top} \cdot N$ \tcp*{$N^{\new} \in \R^{m \times m}$} \label{line:N_wsos}
    
    \Return{$T^{\new}, N^{\new}$}
\end{algorithm}

\subsection{Correctness}
In this section we prove the correctness of the WSOS algorithm described in the previous section. 
In the previous section we have shown that an analogue of Lemma~\ref{lem:hessian_inverse_update} holds for WSOS. Lemma~\ref{lem:invariant_newton} and Lemma~\ref{lem:approximate_optimality} directly hold for our WSOS algorithm since they can be applied to any barrier function. It remains to prove the following analogue of \Cref{lem:slowly_moving}.
\begin{lemma}[Slowly moving guarantee for WSOS]\label{lem:slow_move_wsos}
Let $s, s^{\new} \in \R^U$ be the current and the updated slack variables of the WSOS algorithm. For each $i \in [k]$, define $S_i = \Lambda_i(s) = P_i^{\top} \diag(\boldf_i \circ s) P_i$, and $S_i^{\new} = \Lambda_i(s^{\new}) = P_i^{\top} \diag(\boldf_i \circ s^{\new}) P_i$. Let $\ov{S} := \diag(S_1, \ldots, S_k)$, and $\ov{S}^{\new} := \diag(S_1^{\new}, \ldots, S_k^{\new})$. Then we have
\[
\|\ov{S}^{-1/2} \ov{S}^{\new} \ov{S}^{-1/2}  - I\|_F^2 = \|\delta_y\|_{H^\boldf(y)}^2,
\]
where $H^\boldf(y)$ is the Hessian of the barrier function $F^\boldf_\eta(y)$, as defined in Eq.~\eqref{eq:wsos_barrier_gradient_hessian}.
\end{lemma}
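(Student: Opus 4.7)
The plan is to reduce the WSOS statement to $k$ parallel applications of the SOS calculation used in the proof of \Cref{lem:slowly_moving}, exploiting the block-diagonal structure of $\ov S$ and $\ov S^{\new}$.

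\textbf{Step 1: Block-diagonal decomposition.} Since $\ov S = \diag(S_1,\ldots,S_k)$ with each $S_i \succeq 0$ (by feasibility of $s$ together with the dual characterisation \eqref{eq:wsos_dual_characterization}), the matrices $\ov S^{-1/2}$ and $\ov S^{\new}$ are also block diagonal, hence so is $\ov S^{-1/2} \ov S^{\new} \ov S^{-1/2} - I$. The Frobenius norm squared of a block-diagonal matrix is the sum over blocks, so
\[
\|\ov S^{-1/2} \ov S^{\new} \ov S^{-1/2} - I\|_F^2 = \sum_{i=1}^k \|S_i^{-1/2} S_i^{\new} S_i^{-1/2} - I\|_F^2.
\]

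\textbf{Step 2: Per-block calculation mimicking \Cref{lem:slowly_moving}.} Set $\delta_s := s^{\new} - s = -A^\top \delta_y$. Expand
\[
S_i^{\new} - S_i = P_i^\top \diag(\boldf_i \circ \delta_s) P_i = \sum_{u=1}^U (\boldf_i)_u (\delta_s)_u \, p_{i,u} p_{i,u}^\top,
\]
where $p_{i,u}$ is the $u$-th row of $P_i$. Writing $M_i := P_i S_i^{-1} P_i^\top$ and following the chain of equalities in the proof of \Cref{lem:slowly_moving} verbatim but carrying the extra weight $(\boldf_i)_u (\boldf_i)_v$, I obtain
\[
\|S_i^{-1/2} S_i^{\new} S_i^{-1/2} - I\|_F^2 = \sum_{u,v} (\boldf_i)_u (\boldf_i)_v (\delta_s)_u (\delta_s)_v (M_i)_{uv}^2 = \delta_s^\top \bigl((\boldf_i \boldf_i^\top) \circ M_i^{\circ 2}\bigr) \delta_s.
\]
Here the last equality is just the Hadamard-bilinear form identity (equivalently Part~\ref{part:Hadamard_rank_1} of \Cref{fac:Hadamard_product_property}, which yields $\diag(\boldf_i) M_i^{\circ 2} \diag(\boldf_i) = (\boldf_i \boldf_i^\top) \circ M_i^{\circ 2}$).

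\textbf{Step 3: Assemble the Hessian and finish.} Substituting $\delta_s = -A^\top \delta_y$ converts the quadratic form to
\[
\delta_y^\top A \bigl((\boldf_i \boldf_i^\top) \circ M_i^{\circ 2}\bigr) A^\top \delta_y = \delta_y^\top H_i(y) \delta_y,
\]
which is precisely the $i$-th summand of $H^\boldf(y)$ in \eqref{eq:wsos_barrier_gradient_hessian}. Summing over $i \in [k]$ and combining with Step~1 yields
\[
\|\ov S^{-1/2} \ov S^{\new} \ov S^{-1/2} - I\|_F^2 = \delta_y^\top \Bigl(\sum_{i=1}^k H_i(y)\Bigr) \delta_y = \|\delta_y\|_{H^\boldf(y)}^2,
\]
as required. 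There is no real obstacle here: the only ``new'' ingredient relative to \Cref{lem:slowly_moving} is tracking the diagonal weighting by $\boldf_i$, which naturally produces the $(\boldf_i \boldf_i^\top)$ Hadamard factor that is already present in the WSOS Hessian formula. The block-diagonal reduction in Step~1 is what makes the single-cone argument apply cleanly to the $k$-fold product cone.
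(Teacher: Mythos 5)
Your proof is correct and follows essentially the same route as the paper: the same per-block Frobenius-norm expansion with the extra $(\boldf_i)_u(\boldf_i)_v$ weights yielding $\|\delta_s\|^2_{(\boldf_i\boldf_i^\top)\circ M_i^{\circ 2}}$, the same substitution $\delta_s=-A^\top\delta_y$ to recover $\|\delta_y\|^2_{H_i(y)}$, and the same block-diagonal summation over $i\in[k]$ (which the paper simply performs at the end rather than at the start). No gaps.
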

\begin{proof}
For each $i \in [k]$, we define $M_i := P_i S^{-1} P_i^\top$, and let $p_{i,u}$ denote the $u$-th row of $P_i$ for each $u \in [U]$. Let $\delta_s = -A^\top \delta y$, and note that $s^{\new} - s = \delta_s$. We have that
\begin{equation}
\begin{aligned}
\|S_i^{-1/2} S_i^{\new} S_i^{-1/2}  - I\|_F^2 &= \|S_i^{-1/2} \big( S_i^{\new} - S_i \big) S_i^{-1/2} \|_F^2 \notag \\
&= \|S_i^{-1/2} \big( P_i^{\top} \diag(\boldf_{i} \circ \delta_s) P_i \big) S_i^{-1/2} \|_F^2 \notag  \\
&=\tr\Big(S_i^{-1} \big(P_i^\top \diag(\boldf_i \circ \delta_s) P \big) S_i^{-1} \big(P_i^\top \diag(\boldf_i \circ \delta_s) P_i \big)\Big) \notag \\
&= \tr\Big(S_i^{-1} \big(\sum_{u \in U}(\boldf_{i})_u(\delta_s)_u \cdot p_{i,u} p_{i,u}^\top \big) S_i^{-1}
\big( \sum_{v \in U} (\boldf_{i})_v(\delta_s)_v \cdot p_{i,v} p_{i,v}^\top \big)
\Big) \notag \\
&= \sum_{u,v \in U} (\boldf_{i})_u (\boldf_{i})_v (\delta_s)_u (\delta_s)_v \cdot \tr\big(S_i^{-1} p_{i,u} p_{i,u}^\top S_i^{-1}
p_{i,v} p_{i,v}^\top
\big) \notag \\
&= \sum_{u,v \in U} (\boldf_{i})_u (\boldf_{i})_v  (\delta_s)_u (\delta_s)_v \cdot (p_{i,v}^\top S_i^{-1} p_{i,u})^2 \notag \\
&= \sum_{u,v \in U}(\boldf_{i})_u (\boldf_{i})_v  (\delta_s)_u (\delta_s)_v \cdot (M_i)_{uv}^2
~=~ \|\delta_s\|_{(\boldf_i \boldf_i^\top)\circ{M_i^{\circ 2}}}^2,
\end{aligned}
\end{equation}
where the third step follows from $\|A\|_F^2 = \tr(A^{\top} A)$ and the cyclic property of trace, and the sixth step again follows from the cyclic property of trace.

Since $\delta_s = -A^\top \delta y$ and $H_i(y) = A \big((\boldf_i \boldf_i^\top)\circ{M_i^{\circ 2}} \big) A^\top$, we have 
\begin{equation*}
\|\delta_s\|_{(\boldf_i \boldf_i^\top)\circ{M_i^{\circ 2}}} = \delta_s^\top \big((\boldf_i \boldf_i^\top)\circ{M_i^{\circ 2}}\big) \delta_s = \delta_y^\top A \big((\boldf_i \boldf_i^\top)\circ{M_i^{\circ 2}} \big) A^\top \delta_y = \|\delta_y\|^2_{H_i(y)},
\end{equation*}
and so  
\begin{equation*}
\|S_i^{-1/2} S_i^{\new} S_i^{-1/2}  - I\|_F^2 = \|\delta_y\|_{H_i(y)}^2.
\end{equation*}

Since $\ov{S} := \diag(S_1, \ldots, S_k)$, and $\ov{S}^{\new} := \diag(S_1^{\new}, \ldots, S_k^{\new})$, we have 
\begin{equation*}
\begin{aligned}
\|\ov S^{-1/2}\ov{S}^{\new}\ov S^{-1/2} - I\|_F = &~ \Big( \sum_{i=1}^k \|S_i^{-1/2}S_i^{\new} S_i^{-1/2} - I\|_F^2\Big)^{1/2} \\
= &~ \sum_{i=1}^k \|\delta_y\|_{H_i(y)}^2
= \|\delta_y\|_{H^{\boldf}(y)}^2,
\end{aligned}
\end{equation*}
where the last step follows from $H^{\boldf}(y) = \sum_{i=1}^k H_i(y)$.
\end{proof}

Using \Cref{lem:invariant_newton} with the WSOS barrier $F^{\boldf}$, we can bound $\|\delta_y\|_{H^{\boldf}(y)} \le 2\epsilon_N$ throughout the algorithm. Combining with \Cref{lem:slow_move_wsos}, the requirement of \Cref{lem:low_rank_update} is satisfied for the WSOS algorithm where the inputs to \textsc{LowRankUpdate} are block diagonal matrices $\diag(S_1^{\new}, \cdots, S_k^{\new})$ and $\diag(\wt{S}_1, \cdots, \wt{S}_k)$. In the $j$-th iteration let $r_{i,j}$ denote the rank of the update to $\wt{S}_i$, and let $r_j = \sum_{i=1}^k r_{i,j}$. Then for any vector $g \in \R_+^n$ which is non-increasing, we have 
\begin{equation}\label{eq:rank_wsos}
    \sum_{j=1}^t r_j g_{r_j} \le O(T \cdot \|g\|_2 \cdot \log L).
\end{equation}

Now we have proved all the analogous lemmas for the WSOS algorithm. The correctness of the WSOS algorithm follows from a similar argument at that of Theorem~\ref{thm:correct}. We summarize this in the following theorem.
\begin{theorem}[Correctness of WSOS Algorithm]\label{thm:correct_wsos}
Consider the following optimization problem with $A \in \R^{m \times U}$, $b \in \R^m$, and $c \in \R^U$: 
\begin{equation*}
  \begin{aligned}
  \text{Primal:~~~} \min \; &\pr{c}{x} \quad \\
  \mathrm{s.t.}~~ Ax& =b \\
  x &\in \Sigma_{n,2\bd}^{\boldf}\, ,\\
  \end{aligned}
  \quad\quad\quad
  \begin{aligned} 
  \text{Dual:~~~}\max \; & \pr{y}{b} \\
  \mathrm{s.t.}~~ A^\top y + s &= c \\
  s &\in \Sigma_{n,2\bd}^{\boldf \; *}\, . \\
  \end{aligned}
\end{equation*}
Let $\mathrm{OPT}$ denote the optimal objective value of this optimization problem.
Assume that any primal feasible $x \in \Sigma_{n,2\bd}^{\boldf}$ satisfies $\|x\|_1 \leq R$.

Then for any error parameters $\delta \in (0,1)$ and $\epsilon_S, \epsilon_N \leq 0.01$, the variant of Algorithm~\ref{alg:barrier} described in Section~\ref{sec:wsos_algorithm} outputs $x \in \Sigma_{n,2\bd}^{\boldf}$ that satisfies
\begin{equation*}
    \pr{c}{x} \le \mathrm{OPT} + \delta \cdot R \|c\|_\infty \quad \text{and} \quad
    \|A x - b\|_1 \le 8\delta L \cdot (LR \|A\|_\infty + \|b\|_1).
\end{equation*}
\end{theorem}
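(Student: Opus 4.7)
The plan is to mirror the proof of \Cref{thm:correct}, replacing the SOS barrier $F$ by the WSOS barrier $F^{\boldf}$ (whose barrier parameter satisfies $\nu_{F^{\boldf}} \le \sum_{i=1}^k L_i$) and the single slack approximation $\wt S$ by the $k$-tuple $(\wt S_1, \ldots, \wt S_k)$, equivalently encoded by the block-diagonal matrix $\ov{\wt S} := \diag(\wt S_1, \ldots, \wt S_k)$. Initialization invokes the WSOS analog of \Cref{lem:init} (the paper states such a lemma exists) to produce an initial dual iterate $y^0 \in \R^m$ satisfying $\|g^{\boldf}_{\eta^0}(y^0)\|_{H^{\boldf}(y^0)^{-1}} \le \epsilon_N$ at $\eta^0 = 1$, together with $\wt S_i^{(0)} = S_i^{(0)}$ for each $i \in [k]$.

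I then proceed by induction on the iteration index, maintaining two invariants: $(1)$ $\|g^{\boldf}_\eta(y)\|_{H^{\boldf}(y)^{-1}} \le \epsilon_N$, and $(2)$ $\wt S_i \approx_{\epsilon_S} S_i$ for every $i \in [k]$. The WSOS extension of \Cref{lem:hessian_inverse_update} described in Section~\ref{sec:wsos_algorithm} guarantees that the maintained matrix obeys $N^{-1} = A \cdot \bigl(\sum_{i=1}^k (\boldf_i \boldf_i^\top) \circ (P_i \wt S_i^{-1} P_i^\top)^{\circ 2}\bigr) \cdot A^\top$. To apply \Cref{lem:invariant_newton} with $\wt H := N^{-1}$, I must verify $\wt H \approx_{2\epsilon_S} H^{\boldf}(y)$. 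This I obtain by lifting invariant $(2)$ through a chain of elementary steps: Part~2 of \Cref{fac:spectral_approx} gives $\wt S_i^{-1} \approx_{\epsilon_S} S_i^{-1}$, Part~1 of \Cref{fac:spectral_approx} (with $B = P_i$) gives $P_i \wt S_i^{-1} P_i^\top \approx_{\epsilon_S} P_i S_i^{-1} P_i^\top$, and Part~4 of \Cref{fac:spectral_approx} gives $(P_i \wt S_i^{-1} P_i^\top)^{\circ 2} \approx_{2\epsilon_S} (P_i S_i^{-1} P_i^\top)^{\circ 2}$. The identity $(\boldf_i \boldf_i^\top) \circ M = \diag(\boldf_i)\, M\, \diag(\boldf_i)$ from Part~2 of \Cref{fac:Hadamard_product_property} rewrites the weighted Hadamard as a congruence by $\diag(\boldf_i)$, and one more application of Part~1 of \Cref{fac:spectral_approx} preserves $\approx_{2\epsilon_S}$. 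Summing over $i$ (spectral approximation is preserved under PSD addition) and conjugating by $A$ yields the desired $\wt H \approx_{2\epsilon_S} H^{\boldf}(y)$.

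With this Hessian approximation in hand, \Cref{lem:invariant_newton}, which is stated for arbitrary self-concordant barriers and therefore applies to $F^{\boldf}$, preserves invariant $(1)$ at $\eta^{\new} = \eta(1 + \tfrac{\epsilon_N}{20\sqrt{\nu_{F^{\boldf}}}})$ and simultaneously yields $\|\delta_y\|_{H^{\boldf}(y)} \le 2\epsilon_N \le 0.02$. Plugging this bound into \Cref{lem:slow_move_wsos} gives $\|\ov S^{-1/2} \ov S^{\new} \ov S^{-1/2} - I\|_F \le 0.02$ for $\ov S := \diag(S_1, \ldots, S_k)$ and $\ov S^{\new} := \diag(S_1^{\new}, \ldots, S_k^{\new})$. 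Since the operations inside \textsc{LowRankUpdate} (square roots, spectral decomposition, and truncation) commute with block-diagonal structure, running it on the pair $(\ov S^{\new}, \ov{\wt S})$ is equivalent to running it on each pair $(\wt S_i, S_i^{\new})$ independently, so property~(i) of \Cref{lem:low_rank_update} delivers $\wt S_i^{\new} \approx_{\epsilon_S} S_i^{\new}$ for every $i$, closing the induction. After $t = 40 \epsilon_N^{-1} \sqrt{\nu_{F^{\boldf}}}\log(\nu_{F^{\boldf}}/\delta)$ iterations, $\eta \ge 2\nu_{F^{\boldf}}/\delta^2$ and \Cref{lem:approximate_optimality} produces a dual iterate with $b^\top y \ge \OPT - \delta^2$; the WSOS analog of \Cref{lem:init} then translates this dual near-optimality into the claimed primal bounds on $\pr{c}{x}$ and $\|A x - b\|_1$.

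The main obstacle I anticipate is not an analytic one but the bookkeeping step that lifts the per-block spectral approximation $\wt S_i \approx_{\epsilon_S} S_i$ through the nonlinear inverse, the Hadamard square, the weighting by $\boldf_i \boldf_i^\top$, and the sum over $i$, producing exactly the single $2\epsilon_S$-spectral approximation of $H^{\boldf}(y)$ required by \Cref{lem:invariant_newton}. The rewriting $(\boldf_i \boldf_i^\top) \circ M = \diag(\boldf_i)\, M\, \diag(\boldf_i)$ is what makes this go through, because it reduces the weighted Hadamard to an ordinary congruence by a fixed matrix, under which spectral approximation is stable and commutes with PSD summation.
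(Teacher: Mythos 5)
Your proposal is correct and follows essentially the same route as the paper, which itself only remarks that the WSOS correctness ``follows from a similar argument as that of Theorem~\ref{thm:correct}'' using the WSOS analogues of Lemmas~\ref{lem:hessian_inverse_update}, \ref{lem:invariant_newton}, \ref{lem:approximate_optimality}, \ref{lem:low_rank_update} and \ref{lem:slow_move_wsos}; your explicit chain lifting $\wt S_i \approx_{\epsilon_S} S_i$ through the inverse, the Hadamard square, the congruence $(\boldf_i\boldf_i^\top)\circ M = \diag(\boldf_i)M\diag(\boldf_i)$, and the sum over $i$ is exactly the bookkeeping the paper leaves implicit. One small imprecision: running \textsc{LowRankUpdate} on the block-diagonal pair is \emph{not} equivalent to running it independently on each block (the eigenvalue sorting and rank cutoff are global), but this is immaterial since the global guarantee $\ov{\wt S}^{\new} \approx_{\epsilon_S} \ov S^{\new}$ restricted to block-diagonal matrices already yields the per-block approximation you need.
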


\subsection{Time analysis}
In this section we analyse the running time of the WSOS algorithm described in \Cref{sec:wsos_algorithm}. Recall that we define $L = \max_i L_i$.

\paragraph*{Worst-case time}
The worst-case time complexity of the WSOS algorithm can be analyzed in a similar way as that of \Cref{lem:worst_case_time}. We only mention the bottlenecks.
\begin{itemize}
    \item In initialization computing the Hessian inverse takes $O(k \Tmat(U,U,L) + U^{\omega})$ time.
    \item Similar to the SOS algorithm, in each iteration, there are two main bottlenecks:
    \begin{enumerate}
    \item The call to \textsc{UpdateHessianInvWSOS}, which takes $\Tmat(U,U,\min(U,Lr_j))$ time in iteration $j \in [t]$ 
    \item Computing all $P_i(P_i^\top \diag(\boldf_i \circ s^{\new})P_i)^{-1}P_i^\top$ when computing the gradient, which takes $k \Tmat(U,U,L)$ time.
\end{enumerate}
Thus the total time per iteration is $O(\Tmat(U,U,\min(Lr_j,U)) + k\Tmat(U,U,L))$. 
\end{itemize}

\paragraph*{Amortized time}
We can bound the amortized time of our WSOS algorithm using a similar argument as that of the SOS algorithm in \Cref{sec:amortize}. Again assume that $\alpha \geq 5 - 2 \omega$. Using \Cref{lem:general_amortize_tool} and \Cref{eq:rank_wsos} that we proved in the previous section, the amortized iteration complexity is
\begin{equation*}
    O\big(U^2 + U^{\omega - 1/2}L^{1/2} + k\Tmat(U,U,L)\big) \cdot (\log(1/\delta) + U^{o(1)}).
\end{equation*}
Overall we have $\sqrt{kL}$ iterations (since the barrier parameter is at most $\sum_{i=1}^k L_i \leq k L$), resulting in total complexity of 
\begin{equation*}
    O\big((kL)^{0.5}\cdot (U^2 + U^{\omega - 1/2}L^{1/2} + k\Tmat(U,U,L)) + U^{\omega} \big) \cdot (\log(1/\delta) + U^{o(1)}).
\end{equation*}

\paragraph*{Comparison with previous results}
Again assume that the number of constraints $m = \Theta(U)$.
The running times of \cite{py19}, \cite{jiang2020faster}, \cite{hjst21} and our algorithm are
\begin{equation*}
\begin{aligned}
\text{\cite{py19} (WSOS)}: &~ \tilde O\big((kL)^{0.5} \cdot (U^{\omega} + k \Tmat(U,U,L)) \big),\\
\text{\cite{jiang2020faster, hjst21} (SDP)} \footnotemark: &~
\tilde{O}\big( (kL)^{0.5} \cdot \min\{kU L^2 + U^{\omega} + (kL)^{\omega},~ U^2 + (kL)^4 + U^\omega/(kL)^{0.5} + (kL)^{2\omega - 0.5}\} \big), \\
\text{Ours (WSOS)}: &~ \tilde O\big((kL)^{0.5} \cdot (U^2 + U^{\omega-1/2} L^{1/2} + k \Tmat(U,U,L)) + U^{\omega}).
\end{aligned}
\end{equation*}
\footnotetext{When solving WSOS, \cite{jiang2020faster} has running time $O((kL)^{0.5} \cdot (kU L^2 + U^{\omega} + (kL)^{\omega}))$, and \cite{hjst21} has running time $O((kL)^{0.5} \cdot (U^2 + (kL)^4) + U^{\omega} + (kL)^{2 \omega})$. Here, we were able to drop a factor in $k$ due to the block-diagonal structure of the constraint matrix.}

\paragraph*{Reformulation as SDP}
We briefly explain the running time of WSOS when reformulated as SDP. Let $\bq = (q_1, q_2, \cdots, q_U)$ be a basis of $\mathcal V_{n,2\bd}^{\boldf}$. For each $i \in [k]$, let $\bp_{i} = (p_{i, 1}, \ldots, p_{i, L_i})$ be a basis of $\mathcal V_{n,d_i}$. Let $\Lambda_i: \R^U \to \R^{L_i \times L_i}$ be the unique linear mapping satisfying $\Lambda_i(\mathbf{q}) = \mathbf{p}_i \mathbf{p}_i^\top$. Let $\Lambda_i^*: \R^{L_i \times L_i} \to \R^U$ be the adjoint of $\Lambda$, i.e., $\Lambda_i^*$ is the unique linear operator that satisfies $\langle \Lambda_i(x), V \rangle = \langle x, \Lambda_i^*(V) \rangle$ for all $x \in \R^U$ and $V \in \R^{L_i \times L_i}$. Apart from the dual characterization described in Eq.~\eqref{eq:wsos_dual_characterization}, the WSOS cone also has a primal characterization (also proved in Theorem 17.6 of \cite{nesterov2000squared}):
\[
\Sigma_{n, 2\bd}^{\boldf} = \big\{x \in \R^U \mid x = \sum_{i=1}^k \Lambda_i^*(V_i),~~ V_i \succeq 0 \in \R^{L_i \times L_i}\text{ for } i \in [k]\big\}.
\]
We can therefore reformulate the primal of \eqref{WSOS_program_primal_dual} as the following SDP (we use $A_{j,*} \in \R^U$ to denote the $j$-th row of $A \in \R^{m \times U}$):

\begin{equation}
  \label{WSOS_SDP_reformulation}
  \tag{WSOS - SDP}
  \begin{aligned}
  \min \; &\sum_{i = 1}^k \pr{\Lambda_i(c)}{V_i} \quad \\
  \mathrm{s.t.}~~ &\sum_{i=1}^k \pr{\Lambda_i(A_{j,*})}{V_i} = b_j, \quad \forall j \in [m] \\
  &V_i \succeq 0,\quad \forall i \in [k].\\
  \end{aligned}
\end{equation}
Note that the number of constraints is still $m$, unchanged from $\eqref{WSOS_program_primal_dual}$, but the number of variables is now $\sum_{i=1}^k L_i^2 = O(kL^2)$, so just processing the constraint matrix in every iteration of an IPM algorithm costs up to $O(kUL^2)$. The SDP solvers \cite{jiang2020faster} and \cite{hjst21} can profit from the block-diagonal structure of the PSD matrices, which leads to the runtimes claimed in the previous paragraph. 

 \section{Discussion of Bottleneck}
\label{subsec:discussion}
The dominating term in the cost-per-iteration of our SOS algorithm is $\Tmat(U,U,Lr)$. A natural question is whether this can be improved to $\Tmat(U,U,L)$.

The bottleneck for this potential improvement is the following self-contained batch matrix-product problem, which is interesting in its own right:
\begin{itemize}
    \item \bf Input \rm: invertible matrix $H \in \R^{U \times U}$, and $Z = [\diag(u_1)Y, \cdots, \diag(u_r)Y] \in \R^{U \times Lr}$ for some matrix $Y \in \R^{U \times L}$ and $r$ vectors $u_1, \cdots, u_r \in \R^U$.
    \item \bf Output \rm: The inverse $(H + Z Z^{\top})^{-1} - H^{-1}$ under the assumption that $(H + Z Z^{\top})^{-1}$ exists. 
\end{itemize}
Naively computing this inverse using Woodbury's identity via 
\begin{equation} \label{eq: bottleneck_woodbury}
(H + ZZ^\top)^{-1} - H^{-1} = H^{-1}Z(I + Z^\top Z)^{-1}Z^\top H^{-1}
\end{equation}
takes $\Tmat(U,U,Lr)$ time. However, since the update matrix $Z$ only depends on $Ur + UL$ variables ($Y$ and $u_1, \cdots, u_r$), there is no known lower bound that prohibits a better running time of $\Tmat(U,U,L) + \Tmat(U,U,r)$.

Computing $H^{-1}Z$ in \eqref{eq: bottleneck_woodbury} reduces to the following fundamental problem: Given matrices $A, C \in \R^{n \times n}$ and vectors $b_1, \ldots b_n \in \R^n$ the task is to compute $D_k = A \diag(b_k)C$ for all $k \in [n]$. Let $d_{ijk} := (D_k)_{i,j}$.  Then using $b_{k\ell} := (b_k)_\ell$ we can write 
\begin{equation*}
    d_{ijk} = \sum_{\ell = 1}^n a_{i\ell} b_{k\ell} c_{\ell j}.
\end{equation*}
For symmetry let us harmlessly permute the entries in $c$ to obtain
\begin{equation*}
    d_{ijk} = \sum_{\ell = 1}^n a_{i\ell} b_{k\ell} c_{j\ell}.
\end{equation*}
Note that this is exactly the term for matrix multiplication extended to three matrices. For a matrix $F = A B^\top$ we would have $f_{ij} = \sum_{\ell} a_{i\ell}b_{j\ell}$.
From the paragraphs above it is clear that we can compute all $d_{ijk}$ in time $\Tmat(n,n,n^2)$. But can this be done faster? 
Interestingly, the problem of finding a 4-clique in a graph can be reduced to this problem, further proving its significance. Note that the currently best known running time for this problem matches $\Tmat(n,n,n^2)$ for $n$-vertex graphs \cite{eisenbrand2004complexity}.

\paragraph{Acknowledgments}
The second author would like to thank Vissarion Fisikopoulos and Elias Tsigaridas for introducing him from a practical perspective to Sum-of-Square optimization under the interpolant basis. 

\bibliography{bib}

\bibliographystyle{alpha}
\end{document}